\def\eqref#1{equation~\ref{#1}}
\def\1{\bm{1}}
\DeclareMathAlphabet{\mathsfit}{\encodingdefault}{\sfdefault}{m}{sl}
\SetMathAlphabet{\mathsfit}{bold}{\encodingdefault}{\sfdefault}{bx}{n}
\newcommand{\R}{\mathbb{R}}
\newcommand{\w}{\mathbf}
\newcommand{\N}{\mathbb{N}}
\newcommand{\blue}{\textcolor{black}}
\newcommand{\changes}{\textcolor{black}}
\newtheorem{assumption}{Assumption}
\newtheorem{definition}{Definition}%
\newtheorem{theorem}{Theorem}[section]%
\newtheorem{lemma}{Lemma}[section]
\newtheorem{remark}{Remark}[section]
\title{A Multilevel Low-Rank Newton Method with Super-linear Convergence Rate and its Application to Non-convex Problems}
\author{\name Nick Tsipinakis \email nikolaos.tsipinakis@unidistance.ch \\
      \addr Department of Mathematics and Computer Science\\
      UniDistance Suisse\\
      Brig, Switzerland
      \AND
      \name Panagiotis Tigas \email ptigas@robots.ox.ac.uk \\
      \addr Department of Computer Science \\
      Oxford University \\
      Oxford, UK
      \AND
      \name Panos Parpas \email panos.parpas@imperial.ac.uk\\
      \addr Department of Computing\\
	Imperial College London\\
	London, UK}
\begin{document}

\maketitle

\begin{abstract}
Second-order methods can address the shortcomings of first-order methods for the optimization of large-scale machine learning models.
However, second-order methods have significantly higher computational costs associated with the computation of second-order information. Subspace methods that are based on randomization have addressed some of these computational costs as they compute search directions in lower dimensions. Even though super-linear convergence rates have been empirically observed, it has not been possible to rigorously show that these variants of second-order methods can indeed achieve such fast rates. 
Also, it is not clear whether subspace methods are efficient for non-convex settings.
To address these shortcomings, we develop a link between multigrid optimization methods and low-rank Newton methods that enables us to prove the super-linear rates of stochastic low-rank Newton methods rigorously. Our method does not require any computations in the original model dimension. We further propose a truncated version of the method that is capable of solving high-dimensional non-convex problems. Preliminary numerical experiments show that our method has a better escape rate from saddle points compared to the state-of-the-art first-order methods and thus returns lower training errors.
\end{abstract}

\section{Introduction}\label{sec1}
When it comes to applying second-order optimization methods in machine learning, there are two open questions: 1. Can second-order methods be implemented efficiently? 2. Can second-order methods outperform standard first-order methods for traditional ML metrics such as generalization error? Regarding the latter question, several recent articles have argued the efficacy of second-order methods in deep learning \citep{singh2020woodfisher,pascanu2013revisiting,dauphin2014identifying}, reinforcement learning \citep{wu2017second} and variational inference \citep{regier2017fast} (to name but a few). Given the recent promising experimental results obtained via second-order methods, the first question regarding the efficiency of these methods has become even more urgent. In this paper we provide some answers to the first question.

The development of randomized methods such as sketching and sub-sampling has enabled the application of second-order methods to large scale ML models. 
Super-linear or linear-quadratic (composite) convergence for sub-sampled methods has been shown in various recent works  \citep{berahas2017investigation, bollapragada2019exact,erdogdu2015convergence, pilanci2017newton, roosta2019sub, na2023hessian}. But all the existing methods require $O(n^3)$ operations to compute the Newton direction. Stochastic multilevel or sub-space methods, such as the one proposed in this paper, have $O(nN)$ cost for computing and storing the Hessian, and $O(n^2N)$ cost for computing the Newton direction (where $N \ll n$ is the dimension of the sub-space). 
However, it is unclear if stochastic multilevel methods can still converge super-linearly when the hessian is approximated via a random method. 
Although it is clear from numerical experiments that randomized Newton methods can obtain super-linear convergence, rigorous proof under general conditions has not appeared before \cite{hanzely2020stochastic,gower2019rsn}. 

In terms of the theoretical contributions of this paper, the algorithms of \cite{pilanci2017newton} and \cite{tsipinakis2024multilevel} are particularly relevant since they are analyzed using self-concordant functions. In \cite{pilanci2017newton}, restrictive assumptions are made regarding the Hessian. In particular, the authors assume that the square root of the Hessian is available or easily computable, and the method requires access to the full Hessian and both the proof and hence the algorithm cannot be extended to the non-convex case. In this paper we relax all these assumptions without compromising the convergence rate. In \cite{tsipinakis2024multilevel}, super-linear convergence was established, but the algorithm cannot be applied to the non-convex case without non-trivial modifications. This paper links multilevel optimization methods and randomized Newton methods. We exploit this link and establish the global convergence of the algorithm and a local super-linear convergence rate for self-concordant functions.
Because of the way the low-rank Hessian approximation is performed, our method can easily be extended to the non-convex case and to very high dimensions (even when the Hessian is dense).  

Regarding the stochastic subspace methods, the work of \cite{gower2019rsn} is quite relevant, however the method only achieves a linear rate, whereas we achieve a super-linear rate for the same iteration complexity. In addition, the method in  \cite{gower2019rsn} cannot be extended to the non-convex case without non-trivial modifications. In \cite{hanzely2020stochastic}, the authors  develop a related method that is based on the cubic Newton method. But the method has a linear rate, and it is unclear if the method can be efficiently applied to non-convex problems. \cite{shang2024accelerated} develop an accelerated double-sketching subspace Newton method that also attains a local linear rate.
Furthermore, in~\cite{fuji2025theoretical} and~\cite{cartis2025random}, the authors establish global convergence for subspace-regularized Newton methods. However, fast local convergence rates are not available, and these methods have only been tested on relatively small non-convex problems.

Here, in order to apply the Newton method to non-convex problems, we take a truncated SVD of the Hessian to keep the $N+1$ most informative eigenvalues and replace the rest with the $(N+1)^\text{th}$ eigenvalue while negative eigenvalues are replaced by their absolute value.
Therefore, as in the convex case, enforcing the Hessian matrix to be positive definite and premultiplying the negative gradient by its inverse, we perform a local change of coordinates and thus we should expect an accelerated convergence behavior compared to the first-order methods. In addition, when dealing with non-convex functions, the fact that we do not allow for sufficiently small eigenvalues means that slow and flat manifolds around saddle points will turn into saddles whose eigenvalues are large and we therefore achieve a faster escape rate from the unstable region of the saddle points. Similar approaches when computing the Newton direction have already been explored \citep{o2019inexact, paternain2019newton, erdogdu2015convergence}. However, they all require the computation of the full Hessian matrix and hence they are inefficient for high dimensional problems. A work that scales to high dimensional problems which also approximates the Hessian matrix was explored by \cite{marteau2019globally}.  The difference to our approach is that it regularizes the Hessian matrix instead of performing a Truncated SVD. However, selecting the regularization parameter such that the Hessian matrix becomes positive definite is a difficult task and thus the method is suitable to convex problems. In addition, their method has limited applicability since it relies on the existence of an $\ell_2$ regularizer to work. Such an assumption is not present in this paper. 

To this end, all the aforementioned advantages of our approach are demonstrated in our numerical experiments. We show that
our method is efficient in terms of computational requirements and it can be applied to models with millions of parameters even when the Hessian is dense. In particular, the experiments suggest that the proposed methods are well-suited for problems with low-rank Hessian matrix---an assumption that is satisfied in most practical problems, especially in non-convex settings.
Our numerical experiments suggest that the proposed method behaves similarly to the cubic Newton method (in terms of efficiency and ability to escape saddle points) but with significantly less computational requirements.
Moreover, we apply the method to the minimization of a deep autoencoder model, which is known to exhibit numerous flat regions and saddle points. The results demonstrate that the proposed method escapes saddle points and flat regions significantly more efficiently than Adam, despite constructing its iterates in a much smaller subspace.

\section{Background and Method} \label{sec background}
In this section, we briefly discuss the main components of multilevel optimization methods, and introduce the coarse-grained model, which we later modify to compute search directions. 
In the optimization literature, multilevel methods are also referred to as multigrid methods. Since there is no notion of ``grid'' in the proposed algorithm we use the term multilevel, but in the context of this paper the two terms are equivalent.  
Multilevel optimization methods have two components: (1) A hierarchy of coarse models, and (2) Linear prolongation and restriction operators used to ``transfer'' information up and down the hierarchy of models. In this section, we describe these two components; however, for simplicity, we consider a two-level hierarchy. Our results can be extended to more than two levels without substantial changes.

\subsection{The Coarse-grained Model and Main Assumptions}
\label{sec:coarse_model}
We first develop the proposed methodology for a convex optimization model. The non-convex case is discussed in Section \ref{sec non-convex newton}.
Consider the following model,
\begin{equation}
\w{x}^* = \underset{\w{x}\in\R^n }{\operatorname{arg \ min}} \ f(\w{x}). \label{eq:fineModel}
\end{equation}
where we assume that the function $f$ is a strictly convex and self-concordant. Moreover, it has a closed sublevel set and is bounded below so that $\w{x}^\star$ exists. The definition and properties of self-concordant functions can be found in  \cref{appendix: background}, and we refer the interested reader to \cite{MR2061575, MR2142598, nesterov2018lectures, MR1258086} for additional background information. We will adopt the terminology used 
in the multilevel literature and call the model in \eqref{eq:fineModel} the \textit{fine model}. We will also 
assume that we can construct a lower dimensional model called the \textit{coarse model}. In traditional multilevel methods the coarse model is 
typically derived by varying a discretization parameter. In machine learning applications a coarse model can be derived by varying the number of 
pixels in image processing applications~\citep{MR3395393}, or by varying the dictionary size and fidelity in 
statistical pattern recognition (see e.g. \cite{MR3572365} for examples in face recognition, and background extraction from video). We denote the coarse model as,
\begin{equation}
\w{y}^* = \underset{\w{y}\in\R^N }{\operatorname{arg \ min}} \ F(\w{y}). \label{eq:coarseModel}
\end{equation}
We assume that $N<n$ and typically $N\ll n$ and that $F$ is also a strictly convex, self-concordant function. The property that $N<n$ justifies the 
use of the terms {\it coarse-grained} or reduced order model. The role of the coarse model is to help generate high quality search directions from the current
incumbent point $\w{x}_k$ but at a reduced computational cost. Let $\w{R}_k\in\R^{N\times n}$ be the {\it restriction operator} with which one may transfer information from the fine to coarse model and thus we define the initial point in the coarse model as $\w{y_0} :=\w{R}_k \w{x}_k$ at iteration $k$. For simplification purposes from now onwards we will omit the subscript $k$ from the restriction operator. In order for the search direction to be useful (e.g. a descent direction), we assume that 
\textit{first-order coherency condition} below holds,
\begin{equation}
\w{R}\nabla f(\w{x}_k)=\nabla F(\w{y_0}), \label{eq:firstOrderCoherent}
\end{equation}
See \cite{MR2587737} for a discussion of the first-order coherency condition in multilevel optimization.
In addition to the restriction operator we also 
assume that a \text{prolongation operator}, $\w{P}\in\R^{n\times N}$ is  available. We make the following assumption about these two operators.
\begin{assumption} \label{assumption P}
	The restriction and prolongation operators $\w{R}$ and $\w{P}$ satisfy,
	$
	\w{P} = \sigma \w{R}^T, 
	$
	where $\sigma>0$, and $\w{P}$ has full column rank, i.e.,
	$\operatorname{rank}(\w{P}) = N$.
\end{assumption}
The assumptions regarding the restriction and prolongation operators are standard \citep{MR1774296} and, without loss of generality, we assume that 
$\sigma = 1$. A simple way to construct the prolongation and restriction operators, which satisfies Assumption \ref{assumption P}, arises 
from the naive Nystr$\ddot{\text{o}}$m method (see \cite{MR2249884} for an introduction). In particular, we construct $\w{R}$ and $\w{P}$ as described in the definition below.
\begin{definition} \label{definition P}
	Let $S_n = \left\lbrace 1,2, \ldots, n \right\rbrace $ and denote $S_N \subset S_n$, with the property that the elements $N < n$ are uniformly 
	selected by the set $S_n$ without replacement. Furthermore, assume that $s_i$ is the $i^{\text{th}}$ element of $S_N$. Then the prolongation operator $\w{P}$ is generated as follows: the $i^{\text{th}}$ column of $\w{P}$ is the $s_i$ column of $\w{I}_{n \times n}$ and, furthermore, we set $\w{R}$ as
	$\w{P}^T$.
\end{definition}
Beyond \cref{definition P}, several ways to select the restriction operators are available in the literature, e.g., Gaussian, 1- or s-hashing, and sampling matrices; see \cite{cartis2022randomised}. However, for our experiments, we prefer uniform sampling for constructing these operators because it yields efficient iterations by avoiding expensive matrix multiplications.

In addition to the 
first-order coherency condition, we will also assume that the coarse model is also {\it second-order coherent},
\begin{equation}
\w{R}\nabla f^2(\w{x}_k)\w{P}=\nabla^2 F(\w{y_0}). \label{eq:secondOrderCoherent}
\end{equation}
A simple and surprisingly effective method to construct the coarse model in \eqref{eq:coarseModel} while satisfying both the first and 
second order coherency conditions in \eqref{eq:firstOrderCoherent} and \eqref{eq:secondOrderCoherent} is the so-called \textit{Galerkin model},
\begin{equation}
\begin{aligned}
F(\w{y}):= \langle \w{R} & \nabla f(\w{x}_k),  \w{y} -\w{y}_0 \rangle +
\frac12 \langle \w{R}\nabla^2 f(\w{x}_k)\w{P} (\w{y} -\w{y}_0),\w{y} -\w{y}_0 \rangle.
\end{aligned}
\label{eq:galerkinModel}
\end{equation}
In the context of multilevel optimization methods, the Galerkin model was experimentally tested in \cite{MR2738832} where the authors found that it 
compared favorably with other methods to construct coarse-grained models. Motivated by the excellent numerical results in \cite{MR2738832}, and its 
simplicity, we will adopt the model in \eqref{eq:galerkinModel} as the coarse-grained model in the proposed algorithm. The model in \eqref{eq:galerkinModel} 
also has close links with the recently proposed randomized Newton methods which we briefly discuss below (for more details see \cite{tsipinakis2024multilevel, ho2019newton}).  

We compute the \textit{coarse direction} by $\hat{\w{d}}_{H,k} := \w{y}^\star - \w{y}_{0}$. Furthermore, in the case of the Galerkin model defined in \eqref{eq:galerkinModel} we have the closed form solution,
\begin{equation} \label{coarse direction d_H}
\begin{split}
\hat{\w{d}}_{H,k} &= \underset{\w{d}_H \in \mathbb{R}^N }{\operatorname{arg \ min}} \left\lbrace \frac{1}{2} \| [\nabla^2 F(\w{y_0})]^{1/2}  
\w{d}_{H} \|_2^2 + 
\langle \nabla   F (\w{y}_{0}), \w{d}_{H}  \rangle  \right\rbrace  = - [\nabla^2 F(\w{y_0})]^{-1} \nabla F(\w{y_0}).
\end{split}
\end{equation}
Note that the coarse direction is a vector in $\R^N$. To correct the incumbent solution $\w{x}_k$ we must prolongate it to $\R^n$ using the prolongation operator,
\begin{equation} \label{coarse direction d_h}
\hat{\w{d}}_{h,k} := \w{P} \hat{\w{d}}_{H,k} = - \w{P}[\nabla^2 F(\w{y_0})]^{-1} \nabla F (\w{y}_{0}),
\end{equation}
where the $h$ and $H$ subscripts denote directions related to the fine and coarse model respectively. If we naively set $\w{P}$ as the identity matrix, the search direction in \eqref{coarse direction d_h} becomes the Newton direction. When $\w{P}$ is selected as in Definition \ref{definition P} then we obtain the randomized Newton method which is based on uniform sampling. 
In this case, the cost to compute the reduced Hessian is $\mathcal{O}(nN)$ which is much cheaper than $\mathcal{O}(n^2)$ of the Newton method (for details on how to compute the reduced Hessian matrix see Remark 2.1 in \cite{tsipinakis2024multilevel}). 
The general multilevel method of this section achieves a local super-linear convergence for both self-concordant and strongly convex functions \cite{tsipinakis2024multilevel, ho2019newton}. However, when assuming self-concordant function, one can additionally show that the general multilevel method enjoys a global and scale invariant convergence analysis \citep{tsipinakis2024multilevel}. In the next section, we develop a variant of the general multilevel method which achieves a global analysis with a local super-linear convergence rate, which is also applicable to non-convex functions.

\section{Low-rank Multilevel Newton Methods}

We saw above that the general multilevel method can be viewed as a randomized Newton method. In this section we discuss connections of the multilevel scheme with the low-rank Newton method through the naive Nystr\"om method and propose constructing the coarse model using a Truncated Singular Value Decomposition (T-SVD). The version of our method which is suitable for non-convex optimization problems is given in Algorithm \ref{algorithm}.

Let $\w{A} \in \R^{n\times n} $ be a positive definite matrix and $\w{Y} \in \mathbb{R}^{n \times N}$ with rank$(\w{Y}) = N < n$. The Nystr\"om method builds a $\operatorname{rank}$-$N$ approximation of $\w{A}$, namely $\w{A}_N$, as follows,
\begin{equation} \label{nystrom}
  \w{A}_N := \w{A} \w{Y} (\w{Y^T}\w{A}\w{Y})^{-1} (\w{A}\w{Y})^T \approx \w{A}.
\end{equation}
The Nystr\"om method has been extensively studied and has been shown to be an efficient low-rank approximation method for different sampling techniques which gives practitioners the freedom to choose from a wide range of random matrices (for more details see \cite{MR2249884, gittens2011spectral, smola2000sparse, williams2000using}). Then, one may obtain the naive Nystr\"om method when $\w{Y}$ is selected as in Definition \ref{definition P}. Similarly, we may obtain a $\operatorname{rank}$-$N$ approximation of $\w{A}$ by T-SVD,
\begin{equation*}
    \w{A}_N := \w{U}_N \Sigma_N \w{U}^T_N.
\end{equation*}
For $\w{A}$ being a symmetric positive definite matrix, $\w{\Sigma}_N \in \R^{N \times N}$ is a diagonal matrix containing the $N$-largest eigenvalues of $\w{A}$, with $\sigma_1 \geq \sigma_2 \geq \cdots \geq \sigma_N$, and $\w{U}_N \in \R^{n \times N}$ contains the corresponding eigenvectors. Although the naive Nystr\"om method is more efficient compared to the T-SVD method, for the algorithms we propose below we employ the T-SVD as it gives us direct access to the eigenvalues of the Hessian matrix.

\subsection{Convergence Analysis of Low-rank Newton Method for Self-concordant functions} \label{sec full lr newton}


Based on the Nystr\"om method in \eqref{nystrom}, substituting $\w{A}$ with $\nabla^2 f(\w{x}_{k})$, $\w{Y}$ with $\w{P}$ and multiplying right and left with $[\nabla^2 
f(\w{x}_{k})]^{-1}$ we obtain,
\begin{align} \label{hessian_inv approx}
 [\nabla^2 f(\w{x}_{k})]^{-1} \approx  \w{P} (\w{R} \nabla^2 f(\w{x}_{k}) \w{P})^{-1} \w{R}, 
\end{align}
which implies that the coarse direction in \eqref{coarse direction d_h} can be viewed as an approximation of the Newton direction that is based on a low-rank approximation of the Hessian matrix. The fact that \eqref{hessian_inv approx} offers a low rank approximation of the Hessian matrix gives us the freedom to design search directions using different low-rank approximation approaches. Here, similar to the approach proposed in \cite{erdogdu2015convergence}, we perform a T-SVD approximation method (note that the work of \cite{erdogdu2015convergence} is based on sub-sampling and thus it is different to ours and not directly comparable). In particular, we construct an approximation of the inverse Hessian, namely $\w{Q}_{h,k}^{-1}$, by computing the $(N+1)^{\text{th}}$ T-SVD of $\nabla^2 f(\w{x}_{k})$ and then replace all the eigenvalues after the $(N+1)^{\text{th}}$ eigenvalue with $\sigma_{N+1}$. Formally, $\w{Q}_{h,k}^{-1}$ is constructed as follows:
\begin{equation} \label{svd Q}
 \w{Q}_{h,k}^{-1} := \sigma_{k, N+1}^{-1} \w{I}_{n \times n} + \w{U}_{k, N} ( \w{\Sigma}_{k, N}^{-1} - \sigma_{k, N+1}^{-1} \w{I}_{N \times N})
 \w{U}_{k, N}^T,
\end{equation}
where $\w{U}_{k, N}$ and $\w{\Sigma}_{k, N}$ are the matrices of eigenvectors and eigenvalues obtained by the T-SVD process at $\w{x}_{k}$, respectively.
Thus, by definition, the eigenvalues of $\w{Q}_{h,k}^{-1}$ have the form
$\operatorname{diag} \left(1/\sigma_{k,1 }, \ldots, 1/\sigma_{k, N}, 
 1/\sigma_{k, N+1}, \ldots, 1/\sigma_{k, N+1} \right). $
Then, $ \hat{\w{d}}_{h,k} = - \w{Q}_{h,k}^{-1} \nabla f(\w{x}_{k})$ is a descent direction. As a result, for convex (stronlgy or self-concordant) problems, this approach is efficient when the valuable second-order information is concentrated on the first, say $N$, eigenvalues. In particular, eigenvalues of large magnitude correspond to more informative directions since in these directions the objective function has a large curvature, whereas for eigenvalues that are close to zero the curvature becomes almost flat. Therefore, by employing \eqref{svd Q} in the computation of the search direction \eqref{coarse direction d_h} we aim to determine the subspace spanned by the dominant eigenvectors associated with the largest eigenvalues, which are those that yield the fast convergence rate of the Newton method. Such problem structures are typical in machine learning problem \citep{berahas2017investigation}, e.g., the Hessian matrix is (nearly) low-rank and/or there is a big gap between the $\sigma_N$ and $\sigma_{N+1}$ eigenvalues. In the convergence analysis, we make use of the Newton decrement, which is given by,
\begin{equation*}
    \lambda(\w{x}_{k}) := \left[  \nabla f (\w{x}_{k})^T \nabla^2 f (\w{x}_{k})^{-1} \nabla f (\w{x}_{k})\right]^{1/2}.
\end{equation*}
We also define the following quantities
\begin{equation*}
    \varepsilon_k := \frac{\sigma_{k,n}}{\sigma_{k,N+1}}, \ \ \varepsilon := \liminf_{k \rightarrow \infty} \varepsilon_k, \ \ \varepsilon_{\min} := \inf \{ \varepsilon_k \ | \ k \in \mathbb{N} \}.
\end{equation*}
We are now in position to present the convergence analysis
of the method. The convergence is split into two phases according to the magnitude of $\lambda(\w{x}_{k})$. If $\lambda(\w{x}_{k})$ is sufficiently small, then the method enters the fast region of convergence. The fast region is governed by $\eta := (3 - \sqrt{9 - 4\varepsilon_{\min}})/2 $. As the theorem shows, the rate of convergence depends on the magnitude of $\varepsilon$.
\begin{theorem} \label{thm svd phases} 	
 Let $f$ be a strictly convex self-concordant function and suppose that the sequence $(\w{x}_{k})_{k \in \mathbb{N}}$ is generated by $\w{x}_{k+1} = \w{x}_{k} -  t_{k} \w{Q}_{h,k}^{-1} \nabla f (\w{x}_{k})$, where $\w{Q}_{h,k}^{-1}$ as in (\ref{svd Q}). 
 Suppose also that $\varepsilon \neq 0$. 
 Then, 
 there exist $\gamma >0$ and $\eta \in (0, \frac{3 - \sqrt{5}}{2})$ such that
 \begin{enumerate}[label=(\roman*)]
  \item if ${\lambda}(\w{x}_{k}) > \eta$, then \label{low rank newton phase i} $
    f(\w{x}_{k+1}) - f(\w{x}_{k}) \leq -\gamma
 $
  \item if ${\lambda}(\w{x}_{k}) \leq \eta$, then the line search selects the unit step and  \label{low rank newton phase ii}
  \begin{enumerate}
      \item if $\varepsilon \in (0,1)$, then the method achieves a composite convergence rate, i.e.,
      \begin{equation*}
             \lambda(\w{x}_{k+1}) \leq \frac{1 - \varepsilon_{\min} +    \lambda(\w{x}_{k})}{(1 -    \lambda(\w{x}_{k}))^2}    \lambda(\w{x}_{k}) < \lambda(\w{x}_{k})
      \end{equation*}
      \item  if $\varepsilon = 1$, then the method converges with at least super-linear rate, i.e.,
      \begin{equation*}
             \frac{   \lambda(\w{x}_{k+1})}{   \lambda(\w{x}_{k})} \leq \frac{1 - \varepsilon_k +    \lambda(\w{x}_{k})}{(1-   \lambda(\w{x}_{k}))^2}
      \end{equation*}
  \end{enumerate}
 \end{enumerate}
\end{theorem}

The proof of the theorem appears in Appendix \ref{Appendix: theorem low rank Newton}. The convergence rate depends on the value of $\varepsilon$. If $\varepsilon \in (0,1)$, then the method attains a composite rate. If $\varepsilon = 1$, then the sequence $(\varepsilon_k)_{k \in \N}$ converges to one since $\limsup_{k \rightarrow \infty} \varepsilon_k = 1$. In this case the method converges at least super-linearly. Quadratic convergence rate will be achieved if $\varepsilon=1$ and there exists some $k_0 \in \N$ such that $\varepsilon_k = 1$ for all $k \geq k_0$. The theorem does not discuss explicitly the case $\varepsilon = 0$. However, it is readily seen that if this scenario occurs, then $\eta = 0$ and thus the method converges as in phase \ref{low rank newton phase i}.
Furthermore, unlike the classical theory for strongly convex functions, the advantage of using self-concordance as our main assumption results in obtaining a global analysis that has an intuitive local fast convergence rate, which only depends on the ratio between $\sigma_{n} $ and $\sigma_{N+1}$. The cost per iteration of the proposed method is $\mathcal{O}(Nn^2)$. This is much better than $\mathcal{O}(n^3)$ of the full Newton method.

\subsubsection{Extension to Non-convex Problems} \label{sec non-convex newton}

For self-concordant and other convex functions finding $\w{x}^*$ can in many instances be considered as an easy task (e.g., when descent methods are applicable) since all local minima are global. In this case, the unique global minimum can be attained at an $\w{x}$ for which $\| \nabla f (\w{x}) \| = 0$. On the other hand, for non-convex problems, finding $\w{x}^*$ is in general 
an NP hard problem. For the latter class of problems, a point $\w{x}$ for which $\| \nabla f (\w{x}) \| = 0$ can be one of the many local minima or a critical point such as a saddle, for which the Hessian matrix is indefinite. Here, we are concerned with the task of finding a local minimum of a general possible non-convex function $f$. To achieve this, we propose a variant of the low-rank Newton method which we conjecture will have a better escape rate from saddles compared to (stochastic) first-order methods (our conjecture is validated by numerical experiments). 

Constructing the coarse direction using the definition in \eqref{svd Q} is particularly suitable for convex problems since all eigenvalues are positive. Then, for any $i \in \{ 1,2, \ldots, n-1\}$ we have $\sigma_i>0$ which ensures the descent property of $\hat{\w{d}}_{h,k}$. However, when dealing with non-convex functions such guarantee may not be true, e.g., at a neighborhood of a saddle point. Since for the Newton method the negative gradient is pre-multiplied by the inverse Hessian matrix, a negative eigenvalue will result in changing the sign of the corresponding gradient entry which may yield the Newton method converging to a saddle point or a local maximum. On the other hand, the Newton method breaks down when there are zero eigenvalues. To address these shortcomings, we compute the $(N+1)^{\text{th}}$ T-SVD of the Hessian as above, but here we replace all the negative eigenvalues by their absolute value. Further, sufficiently small eigenvalues are replaced by a positive scalar to ensure the non-singularity of the approximated Hessian matrix. Formally, we define $g_{k, i}:\R^{N \times N} \rightarrow \R, i=1, 2, \ldots,N$, such that,
\begin{equation} \label{eq: truncated sigma}
    g_{k, i}([\w{\Sigma}_{k, N}]) := \max \{|[\w{\Sigma}_{k, N}]_{ii}|, \nu \}, \ \nu>0,
\end{equation}
and $g_k(\w{\Sigma}_{k, N}) := \operatorname{diag}(g_{k, 1}([\w{\Sigma}_{k, N}]), \ldots, g_{k, N}([\w{\Sigma}_{k, N}]))$. Then we obtain the truncated low-rank approximation of the Hessian matrix as follows,
\begin{equation} \label{truncated svd Q}
\begin{aligned}
 |{\w{Q}}_{h,k}^{-1}| := g_k(\sigma_{k, N+1})^{-1} & \w{I}_{n \times n} + \w{U}_{k, N} ( g_k(\w{\Sigma}_{k, N})^{-1} - g_k(\sigma_{k, N+1})^{-1} \w{I}_{N \times N})
 \w{U}_N^T.
 \end{aligned}
\end{equation}
Defining the approximated inverse Hessian matrix as above is necessary in order to obtain a descent direction, which together with an appropriately selected step-size parameter we can guarantee the descent nature of the Low-Rank Newton method for non-convex problems. Despite the lower iteration cost compared to the Newton method, there still may be cases that forming the Hessian matrix and computing the
T-SVD is too expensive for some applications (when $n$ is extremely large). In the next section, we address this issue.

\subsection{ Coarse-grained Low-rank Newton Method with Analysis for Self-concordant Functions} \label{sec: convergence analysis of convex multilevel sigmaSVD}

\begin{algorithm}[t]
\caption{SigmaSVD}
\label{algorithm}
\begin{algorithmic}[1]
\State Input: $p < N, \ \nu \in (0, 1), \ \epsilon \in (0 ,0.68^2), \ \alpha \in (0,0.5), \ \beta \in (0,1), 
\ \w{P}_k \in \mathbb{R}^{n \times N}$, \  $\w{x}_{h,0} \in \mathbb{R}^n$  
\State Compute $|{\w{Q}}_{H,k}^{-1}| $ by \eqref{truncated reduced hessian} using the randomized T-SVD \citep{MR2806637}
\State Form $| \hat{\w{d}}_{h,k} |$ by \eqref{truncated coarse direction}
\State Quit if $ - \langle \nabla f_{h,k}(\w{x}_{h,k}), | \hat{\w{d}}_{h,k} | \rangle  \leq \epsilon \ \  $
\State Armijo search: while $f_h(\w{x}_{h,k} + t_k | \hat{\w{d}}_{h,k} |) > f_h(\w{x}_{h,k}) + \alpha t_{h,k} \nabla f^T_{h,k}(\w{x}_{h,k}) | \hat{\w{d}}_{h,k} |, \ t_{h,k} \leftarrow \beta t_{h,k}$
\State Update: $  \w{x}_{h,k+1} := \w{x}_{h,k} + t_{h,k} | \hat{\w{d}}_{h,k} | $, go to 2
\State Return $\w{x}_{h,k}$
\end{algorithmic}
\end{algorithm}

The computational bottleneck of the procedure described in the previous section arises from the fact that the computations are performed over the full Hessian. To address this, we perform the T-SVD on the Hessian matrix of the reduced order model. 
We begin by describing a low-rank multilevel method which is suitable for convex optimization. Later, we will present the extension for non-convex functions. 

Let $\w{Q}_{H,k}$ be a T-SVD-based $\operatorname{rank}$-$p$ approximation of the reduced Hessian matrix in \eqref{eq:secondOrderCoherent}, where $p<N<n$. Then, as in \eqref{hessian_inv approx}, we take
\begin{equation*}
\begin{aligned}
 \hat{\w{Q}}_{h,k}^{-1} := \w{P}\w{Q}_{H,k}^{-1} \w{R} & \approx \w{P} [\w{R} \nabla^2 f(\w{x}_{k}) \w{P}]^{-1} \w{R} = \w{Q}_{h,k}^{-1} \approx 
[\nabla^2 f(\w{x}_{k})]^{-1},
\end{aligned}
\end{equation*}
which in fact is a $\operatorname{rank}$-$p$ approximation of the inverse Hessian matrix with the difference that the computational cost of forming $\hat{\w{Q}}_{h,k}^{-1}$ is significantly reduced since we compute $[\w{U}_{k, p+1}, \w{\Sigma}_{k, p+1}]$ (by T-SVD) over the reduced Hessian matrix \eqref{eq:secondOrderCoherent}. We wish our Hessian approximation to have a similar structure as in \eqref{svd Q}, thus, we define  
\begin{equation} \label{svd small Q_H}
\w{Q}_{H,k}^{-1} := \sigma_{k, p+1}^{-1} \w{I}_{N \times N} + \w{U}_{k, p} ( \w{\Sigma}_{k, p}^{-1} - \sigma_{k, p+1}^{-1} \w{I}_{p \times p}) \w{U}_{k, p}^T,
\end{equation}
where $\w{U}_{k, p} \in \mathbb{R}^{N \times p}$. Therefore, $\w{\hat{d}}_{h,k} = - \hat{\w{Q}}_{h,k}^{-1} \nabla f(\w{x}_{k})$ is an approximation of the Newton direction. We call this method SigmaSVD as it is an approximation of Sigma in \cite{tsipinakis2024multilevel}.
Let us now define the approximate decrements for both SigmaSVD and the general multilevel method of section \ref{sec background}, 
\begin{equation} \label{gama decrement}
\begin{split}
& \hat{\lambda}(\w{x}_{k}) := \left[  \nabla f (\w{x}_{k})^T \hat{\w{Q}}_{h,k}^{-1} \nabla f (\w{x}_{k})\right]^{1/2}, \\ & \tilde{\lambda}(\w{x}_{k}) := \left[ (\w{R} \nabla f (\w{x}_{k}))^T [\nabla^2 F(\w{y}_0)]^{-1} \w{R} \nabla f (\w{x}_{k})\right]^{1/2},\\
\end{split}
\end{equation}
respectively. Both quantities are analogous to the Newton decrement $\lambda(\w{x}_{k})$ and serve the same purpose. In this section, we will construct the operators randomly at each iterations according to Definition \ref{definition P}, thus we slightly change the notation of random operators to $\w{P}_k$ and $\w{R}_k$. This way we enhance the applicability of the method. The following assumption emerges naturally.

\begin{assumption} \label{ass: prob delta}
    For all $k \in \N$ it holds $\hat{\lambda}(\w{x}_{k}) > 0$ with some probability $\delta > 0$.
\end{assumption}

The above assumption is typical when analyzing probabilistic subspace methods \citep{cartis2022randomised, tsipinakis2024multilevel, ho2019newton}. Since $\hat{\lambda}(\w{x}_{k})$ is a norm of $\w{R}_k \nabla f (\w{x}_{k})$, \cref{ass: prob delta} effectively prevents the latter from becoming zero with some probability. This assumptions is needed when analyzing these methods because $\w{R}_k \nabla f (\w{x}_{k}) = 0$ can occur when $\w{R}_k \in \operatorname{null} (\nabla f (\w{x}_{k}))$. 

When assuming self-concordant functions, the convergence analysis of the proposed method is similar to that in \cite{tsipinakis2024multilevel}, with the difference that the term that controls its convergence rate is given by $\hat{e}_k := (\lambda^2(\w{x}_{k}) - \hat{\lambda}^2(\w{x}_{k}))^{1/2}$ instead of $\tilde{e}_k := (\lambda^2(\w{x}_{k}) - \tilde{\lambda}^2(\w{x}_{k}))^{1/2}$. It has been shown that $\tilde{e}_k\leq (1 - \mu_k^2)^{1/2}\lambda(\w{x}_{k})$, $0< \mu_k \leq 1$ \citep{tsipinakis2024multilevel}. Intuitively, it should be expected $\hat{e}_k$ to further incorporate the information carried out by the T-SVD. Self-concordant functions allow us to prove such an informative upper bound.   

\begin{lemma} \label{lemma for thm}
For all $k \in \mathbb{N}$ we have that
\begin{enumerate}
    \item $\w{d}_{h,k}^T \nabla^{2} f(\w{x}_k) \hat{\w{d}}_{h,k} = \hat{\lambda}^2(\w{x}_{k})$
    \item $\hat{\w{d}}_{h,k}^T \nabla^{2} f(\w{x}_k) \hat{\w{d}}_{h,k} \leq \hat{\lambda}^2(\w{x}_{k})$
    \item $\| [\nabla^{2} f(\w{x}_k)]^\frac{1}{2} (\w{d}_{h,k} - \hat{\w{d}}_{h,k}) \| \leq \hat{e}_k$
    \item $\sqrt{\frac{\sigma_{k, N}}{\sigma_{k, p+1}}} \tilde{\lambda}(\w{x}_{k}) \leq \hat{\lambda}(\w{x}_{k}) \leq \tilde{\lambda}(\w{x}_{k})
    \leq \lambda(\w{x}_{k}) $
\end{enumerate}

Further, let Assumption \ref{ass: prob delta} hold. Then, there exists $\mu_k \in (0,1]$ such that $\hat{e}_k \leq (1 - \frac{\sigma_{k,N}}{\sigma_{k, p+1}} \mu_k^2)^{1/2} \lambda(\w{x}_{k})$ with probability $\delta$. 
\end{lemma}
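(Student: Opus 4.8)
The plan is to prove the four numbered items first, after which the concluding bound follows by a one-line substitution. Throughout I abbreviate $\w{H} := \nabla^2 f(\w{x}_k)$, $\w{g} := \nabla f(\w{x}_k)$, and write $\w{H}_H := \w{R}\w{H}\w{P} = \nabla^2 F(\w{y_0})$ for the reduced Hessian and $\w{g}_H := \w{R}\w{g}$; I also suppress the iteration index on the eigenvalues, writing $\sigma_j$ for $\sigma_{k,j}$. Using $\w{R}=\w{P}^T$ from Assumption \ref{assumption P} (with the scaling set to one), the key structural facts are that $\hat{\w{Q}}_{h,k}^{-1} = \w{P}\w{Q}_{H,k}^{-1}\w{P}^T$ is symmetric and that $\w{P}^T\w{H}\w{P} = \w{H}_H$, $\w{P}^T\w{g} = \w{g}_H$. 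Item 1 is then pure algebra: expanding $\w{d}_{h,k} = -\w{H}^{-1}\w{g}$ and $\hat{\w{d}}_{h,k} = -\hat{\w{Q}}_{h,k}^{-1}\w{g}$, the factor $\w{H}^{-1}\w{H}$ cancels and leaves $\w{g}^T\hat{\w{Q}}_{h,k}^{-1}\w{g} = \hat{\lambda}^2(\w{x}_k)$. Item 3 follows at once by expanding $\|\w{H}^{1/2}(\w{d}_{h,k}-\hat{\w{d}}_{h,k})\|^2 = \w{d}_{h,k}^T\w{H}\w{d}_{h,k} - 2\,\w{d}_{h,k}^T\w{H}\hat{\w{d}}_{h,k} + \hat{\w{d}}_{h,k}^T\w{H}\hat{\w{d}}_{h,k}$ and substituting $\w{d}_{h,k}^T\w{H}\w{d}_{h,k} = \lambda^2(\w{x}_k)$, item 1, and item 2 to collapse the right-hand side to at most $\lambda^2 - \hat{\lambda}^2 = \hat{e}_k^2$.

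The heart of the argument is items 2 and 4, which I would reduce to scalar eigenvalue comparisons. Because $\hat{\w{Q}}_{h,k}^{-1} = \w{P}\w{Q}_{H,k}^{-1}\w{P}^T$, every quadratic form in $\w{g}$ descends to the reduced space: $\hat{\lambda}^2 = \w{g}_H^T\w{Q}_{H,k}^{-1}\w{g}_H$, $\hat{\w{d}}_{h,k}^T\w{H}\hat{\w{d}}_{h,k} = \w{g}_H^T\w{Q}_{H,k}^{-1}\w{H}_H\w{Q}_{H,k}^{-1}\w{g}_H$, and $\tilde{\lambda}^2 = \w{g}_H^T\w{H}_H^{-1}\w{g}_H$. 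Since $\w{Q}_{H,k}^{-1}$ in \eqref{svd small Q_H} is built to be diagonal in the eigenbasis of $\w{H}_H$, acting as $1/\sigma_j$ for $j\leq p$ and as $1/\sigma_{p+1}$ for $j>p$, the two matrices commute and all comparisons become eigenvalue-by-eigenvalue. For item 2 I would check $q_j\sigma_j \leq 1$ for every $j$ (equality for $j\leq p$, and $\sigma_j/\sigma_{p+1}\leq 1$ for $j>p$), giving $\w{Q}_{H,k}^{-1}\w{H}_H\w{Q}_{H,k}^{-1} \preceq \w{Q}_{H,k}^{-1}$. For the sandwich in item 4 I would verify the Loewner orderings $\tfrac{\sigma_N}{\sigma_{p+1}}\w{H}_H^{-1} \preceq \w{Q}_{H,k}^{-1} \preceq \w{H}_H^{-1}$, again splitting into $j\leq p$ and $p<j\leq N$ and using $\sigma_N \leq \sigma_j \leq \sigma_{p+1}$; this yields $\sqrt{\sigma_N/\sigma_{p+1}}\,\tilde{\lambda} \leq \hat{\lambda} \leq \tilde{\lambda}$. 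The last inequality $\tilde{\lambda}\leq\lambda$ is the projection fact: with $\w{M}:=\w{H}^{1/2}\w{P}$, the matrix $\w{M}(\w{M}^T\w{M})^{-1}\w{M}^T = \w{H}^{1/2}\w{P}(\w{R}\w{H}\w{P})^{-1}\w{R}\,\w{H}^{1/2}$ is the orthogonal projector onto $\operatorname{range}(\w{M})$ and hence $\preceq \w{I}$, so $\w{P}(\w{R}\w{H}\w{P})^{-1}\w{R} \preceq \w{H}^{-1}$ and therefore $\tilde{\lambda}\leq\lambda$.

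For the concluding bound I would set $\mu_k := \tilde{\lambda}(\w{x}_k)/\lambda(\w{x}_k)$. Under the standing hypothesis $\hat{\lambda}(\w{x}_k)>0$ and item 4 we have $0 < \hat{\lambda} \leq \tilde{\lambda} \leq \lambda$ with $\lambda>0$ for $\w{x}_k\neq\w{x}^*$ by strict convexity, so $\mu_k \in (0,1]$ is well defined. Then item 4 gives $\hat{\lambda}^2 \geq \tfrac{\sigma_N}{\sigma_{p+1}}\tilde{\lambda}^2 = \tfrac{\sigma_N}{\sigma_{p+1}}\mu_k^2\lambda^2$, whence $\hat{e}_k^2 = \lambda^2 - \hat{\lambda}^2 \leq \big(1 - \tfrac{\sigma_N}{\sigma_{p+1}}\mu_k^2\big)\lambda^2$, and taking square roots yields the claim. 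The one genuine subtlety — and the step I would treat most carefully — is the commuting-eigenbasis reduction underlying items 2 and 4: it hinges entirely on $\w{Q}_{H,k}^{-1}$ being simultaneously diagonalizable with $\w{H}_H$ (precisely how \eqref{svd small Q_H} is constructed) and on keeping the index bookkeeping $\sigma_N \leq \sigma_j \leq \sigma_{p+1}$ straight across the two eigenvalue regimes $j\leq p$ and $p<j\leq N$.
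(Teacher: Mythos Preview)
Your proposal is correct and follows essentially the same approach as the paper: item~1 by direct cancellation, item~3 by expanding the square and substituting items~1--2, items~2 and~4 by eigenvalue comparisons exploiting that $\w{Q}_{H,k}^{-1}$ and $\w{H}_H$ are simultaneously diagonalizable, and $\tilde{\lambda}\leq\lambda$ via the orthogonal-projector argument. The only cosmetic difference is that for items~2 and~4 you compare $\w{Q}_{H,k}^{-1}$ against $\w{H}_H^{-1}$ directly in the Loewner order, whereas the paper conjugates by $\w{Q}_{H,k}^{\pm 1/2}$ and bounds the resulting maximal eigenvalue; both routes reduce to the same index-by-index check $\sigma_j/\sigma_{p+1}\leq 1$ and $\sigma_N\leq\sigma_j\leq\sigma_{p+1}$, and your explicit choice $\mu_k=\tilde{\lambda}(\w{x}_k)/\lambda(\w{x}_k)$ matches the paper's existence claim.
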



The proof of the theorem below is based on the proof technique of Theorem \ref{thm svd phases}. We define the following quantities
\begin{equation} \label{eq:epsilon_mu_definitions}
\begin{split}
    & \bar{\varepsilon}_k := \frac{\sigma_{k,N}}{\sigma_{k,p+1}}, \ \bar{\varepsilon}_{\min} := \inf \{ \varepsilon_k \ | \ k \in \mathbb{N} \}, \  \bar{\varepsilon} := \liminf_{k \rightarrow \infty} \varepsilon_k, \\ &  \mu := \liminf_{k \rightarrow \infty} \mu_k, \ \mu_{\min} := \inf \{ \mu_k \ | \ k \in \mathbb{N} \}
\end{split}
\end{equation}
We will show that the region of the fast convergence is governed by $\hat{\eta} := \frac{3-\sqrt{5+4\hat{\varepsilon}}}{2}$, where $\hat{\varepsilon} := \sqrt{1 - \bar{\varepsilon}_{\min} \mu^2_{\min}}$.

\begin{theorem} \label{thm expects}
Suppose that $f$ is a strictly self-concordant function, $\w{P}_k$ is selected as in Definition \ref{definition P} and $\bar{\varepsilon}_{\min}, \mu_{\min} \neq 0$. Suppose also that Assumption \ref{ass: prob delta} holds and the sequence $( \w{x}_{k} )_{k \in \mathbb{N}}$ is generated by $\w{x}_{k+1} = \w{x}_{k} - t_k \hat{\w{Q}}_{h,k}^{-1} \nabla f(\w{x}_{k})$. Then, there exist constants $\hat{\gamma} >0$ and $\hat{\eta} \in (0, \frac{3 - \sqrt{5}}{2})$ such that 
 \begin{enumerate}[label=(\roman*)]
  \item if $\lambda(\w{x}_{h,k}) > \hat{\eta}$, then \label{case1 exp}
  \begin{equation*}
     f_h(\w{x}_{h,k+1}) - f_h(\w{x}_{h,k}) \leq -\hat{\gamma},
  \end{equation*}
   with probability $\delta$ at each $k \in \N$,
  \item if $\lambda(\w{x}_{h,k}) \leq \hat{\eta}$, and
  \begin{enumerate}
      \item if either $\bar{\varepsilon} \in  (0,1)$ or $\mu \in  (0,1)$, then
        \begin{equation*}
      \lambda(\w{x}_{k+1}) \leq  \frac{ \sqrt{ 1 - \bar{\varepsilon}_{\min} \mu_{\min}^2} + \lambda(\w{x}_{k})}{\left( 1 - \lambda(\w{x}_{k}) \right)^2}  \lambda(\w{x}_{k}) < \lambda(\w{x}_{k}),
 \end{equation*}
  and thus the sequence $(\lambda(\w{x}_{k}))_{k \in \N}$ achieves a composite rate with probability $\delta$ at each $k \in \N$,
 \item if $\bar{\varepsilon}= \mu = 1$, then 
 \begin{equation*}
      \frac{\lambda(\w{x}_{k+1})}{\lambda(\w{x}_{k})} \leq  \frac{ \sqrt{ 1 - \bar{\varepsilon}_k \mu_k^2} + \lambda(\w{x}_{k})}{\left( 1 - \lambda(\w{x}_{k}) \right)^2},
 \end{equation*}
 and thus the sequence $(\lambda(\w{x}_{k}))_{k \in \N}$ converges super-linearly with probability $\delta$ at each $k \in \N$.
  \end{enumerate}
 \end{enumerate}
\end{theorem}
The proof of the theorem appears in Appendix \ref{sec proof of thm2}. The difference between theorem \ref{thm svd phases} and \ref{thm expects} is that the latter involves $\mu_k$, which effectively is the quantity that controls the level of approximation between the coarse and fine directions. Therefore, a slower convergence rate is expected compared to the low-rank Newton method in section \ref{sec full lr newton}. If $\mu_k$ approaches one, then SigmaSVD will achieve the fast rate of the low-rank Newton method. The method is also an approximation of the multilevel method in \cite{tsipinakis2024multilevel}. Theorem \ref{thm expects} shows that SigmaSVD will \changes{converge} similar to the standard multilevel method in \cite{tsipinakis2024multilevel} if $\bar{\varepsilon}_k$ is large enough. As in theorem \ref{thm svd phases}, if $\bar{\varepsilon} = 0$ or $\mu = 0$, then $\hat{\eta} = 0$ and the method will converge as in phase \ref{case1 exp}.

\subsection{Analysis for Non-convex Functions and \changes{Polyak-Lojasiewicz} Inequality} \label{sec: non-convex analysis}

In this section we analyze SigmaSVD abandoning self-concordance and convexity from our assumptions. Specifically, assuming Lipschitz continuous gradients, we show reduction in the value of the objective function for sequences $(\w{x}_{k})_{k \in \N}$ generated using the truncated corse direction \eqref{truncated coarse direction} below. If we further assume that our objective function satisfies the Polyak-Lojasiewicz (PL) inequality, then the proposed method will converge with a linear rate.

Based on the discussion in section \ref{sec non-convex newton} and given the eigenvalue decomposition which, as in \ref{sec: convergence analysis of convex multilevel sigmaSVD}, returns $\w{\Sigma}_{k, p+1}$ and $\w{U}_{k, p+1}$, the truncated reduced Hessian matrix is defined as follows
\begin{equation} \label{truncated reduced hessian}
\begin{aligned}
 |{\w{Q}}_{H,k}^{-1}| := g_{k, p+1}(\Sigma_{k, p+1})^{-1} & \w{I}_{N \times N} + \w{U}_{k, p} ( g_k(\w{\Sigma}_p)^{-1} - g_{k, p+1}(\Sigma_{k, p+1})^{-1} \w{I}_{p \times p})
 \w{U}_{k, p}^T,
 \end{aligned}
\end{equation}
then the truncated coarse direction is given by 
\begin{equation} \label{truncated coarse direction}
 | \hat{\w{d}}_{h,k} |  := - | \hat{\w{Q}}_{h,k}^{-1} | \nabla f(\w{x}_{k}) = - \w{P} |{\w{Q}}_{H,k}^{-1}| \w{R} \nabla f(\w{x}_{k}).
\end{equation}

The full algorithm together with a step-size strategy is specified in Algorithm \ref{algorithm}. When the objective function is convex or self-concordant Algorithm \ref{algorithm} coincides with SigmaSVD in section \ref{sec: convergence analysis of convex multilevel sigmaSVD}. 

\begin{assumption} \label{ass: Lipschitz gradient}
    The gradient of $f$ is Lipschitz continuous on $\R^n$, that is, there exists $M>0$ such that for all $\w{x}, \w{y} \in \R^n$
    \begin{equation*} 
    f(\w{y}) \leq f(\w{x}) + \nabla f(\w{x})^T(\w{y}-\w{x}) + \frac{M}{2}\|\w{y} - \w{x} \|^2,
\end{equation*}
\end{assumption}

A direct consequence of the above assumption is the boundness of the Hessian matrix from above: $\nabla^2 f (\w{x}) \preceq M \w{I}_N$, for all $\w{x} \in \R^n$. 

\begin{assumption} \label{ass: mu hat prob delta}
    There exists $\hat{\mu} \in (0, 1]$ such that $\| \w{R}_k \nabla f (\w{x}_{k}) \| \geq \hat{\mu} \| \nabla f (\w{x}_{k}) \|$ with probability $\hat{\delta} > 0$. 
\end{assumption}

Assumption \ref{ass: mu hat prob delta} is typical when analyzing subspace methods, see \cite{cartis2022randomised, cartis2024randomized, cartis2025random}.
In this section we use it in place of assumption \ref{ass: prob delta} to prevent ineffective coarse steps with probability $\hat{\delta}$.  

\begin{theorem} \label{thm: reduction f with lipscitz gradients}
Let assumptions \ref{ass: Lipschitz gradient} and \ref{ass: mu hat prob delta} hold. Suppose also that the sequence $(\w{x}_k)_{k \in \N}$ is generated by $\w{x}_{k+1} = \w{x}_k + t_k | \hat{\w{d}}_{h,k} |$ and $t_k \leq \frac{|\sigma_{k, p+1}|}{M \omega^2}$. Then
    \begin{equation*}
        f(\w{x}_{k+1}) - f(\w{x}_{k}) \leq - \frac{\hat{\mu}^2 \nu}{2 \omega^2 M} \| \nabla f(\w{x}_{k}) \|^2,
    \end{equation*}
    with probability $\delta$.
\end{theorem}

The above result shows that we can obtain a descent method when the function has Lipschitz gradients. The result is global and remains true with probability $\delta$. The method will not diverge since, with probability $1 - \delta$, $\hat{\mu}$ may be zero, and thus $f(\w{x}_{k+1}) = f(\w{x}_{k})$. However, due to the randomness of the method, at a future iteration, it is expected that the coarse  model will not be ineffective (and thus $\hat{\mu} \neq 0$), which effectively yields a convergent algorithm. In order to derive a rate of convergence, we make use of the following assumption.

\begin{assumption} \label{ass: PL ineq}
The PL inequality holds; there exists $\xi > 0$ such that
\begin{equation} 
\frac{1}{2} \| \nabla f (\w{x}_{k}) \|^2 \geq \xi (f (\w{x}_{k}) - f (\w{x}^*)).
\end{equation}    
\end{assumption}

The PL condition is weaker than convexity (see \cite{karimi2016linear} for more details) and has been (empirically) shown that it is often satisfied for over-parameterized neural networks (see \cite{belkin2021fit} and references therein). We think that this result goes some way to explaining the behavior of the algorithm (at least close enough to a minimum). 

\begin{theorem} \label{thm pl convergence}
Let assumptions \ref{ass: Lipschitz gradient}, \ref{ass: PL ineq} and \ref{ass: mu hat prob delta} hold. Assume that $(\w{x}_{k})_{k \in \N}$ is generated by $\w{x}_{k+1} = \w{x}_{k} + \frac{|\sigma_{k, p+1}|}{M \omega^2} | \hat{\w{d}}_{h,k} |$. Then
    \begin{equation*}
    f (\w{x}_{k+1}) - f (\w{x}^*) \leq \left(1 - \frac{\nu \xi \hat{\mu}^2}{2 \omega^2 M^2}\right) \left(f(\w{x}_{k}) - f (\w{x}^*)\right)
    \end{equation*}
    with probability $\hat{\delta}$.
\end{theorem}
Refer to Appendix \ref{sec proof pl convergence} for the proof. The theorem presented is global, demonstrating that convergence to \( f (\w{x}^*) \) can be achieved with at least a linear rate. Theorems \ref{thm: reduction f with lipscitz gradients} and \ref{thm pl convergence} become particularly effective when both $\hat{\mu}$ and $\delta$ are sufficiently large. Objective functions with high \(\hat{\mu}\) and \(\delta\) often include those with low-rank Hessians or concentrated second-order information in the dominant eigenvalues. Functions satisfying the PL inequality typically exhibit low-rank Hessians. Given the method's low per-iteration computational cost (\(\mathcal{O}(N + pN^2)\) in parallel), algorithm \ref{algorithm} is expected to escape saddle points efficiently and converge rapidly to a local minimum. This will be empirically validated in the following section.

\blue{
\begin{remark} \label{remark: delta, mu}
Since the Euclidean norm is used to obtain the convergence results in this section, we are in a position to quantify the parameters $\hat{\mu}$ and $\hat{\delta}$ depending on the choice of the operator $\w{R}$. If $\w{R}$ is a Gaussian matrix whose entries are drawn from $N(0, N^{-1})$, then for any $\hat{\mu} \in (0,1)$ we have $\hat{\delta} = e^{- (1-\hat{\mu}^2)^2 N / 4}$~\citep{cartis2022randomised}. Another possible choice that yields sparse matrices, and is therefore less computationally expensive than using dense Gaussian matrices, is given by $s$-hashing matrices~\citep{kane2014sparser}. When using $s$-hashing matrices to construct the coarse models, then for any $\hat{\mu} \in (0,1)$ we have $\hat{\delta} = e^{-N (1-\hat{\mu}^2)^2 / C_1}$, where $s = C_2 (1-\hat{\mu}^2) N$, and $C_1, C_2$ are problem-dependent constants. To the best of our knowledge, such explicit quantitative values for $\hat{\mu}$ and $\hat{\delta}$ under \cref{definition P} are not available in the literature.
\end{remark}
}

\blue{
    \begin{remark}
    In practice, we select $N$ and $p$ as follows: When $\w{R}$ is constructed as in \cref{definition P}, these parameters should be selected according to the effective rank of the Hessian at the optimum, 
$R = \frac{\operatorname{tr}(\nabla^2 f(\mathbf{x}^*))}{\lambda_{\max}(\nabla^2 f(\mathbf{x}^*))}$. The quantity $R$ indicates how many directions really matter near the solution, where most methods slow down significantly. If $R$ is known, in practice $N$ and $p$ should be selected larger and slightly smaller than $R$, respectively. Alternatively, they can be selected in a similar fashion according to the quantity $\max\{n, R \ln n\}$, as suggested in \cite{erdogdu2015convergence}. On the other hand, if $\w{R}$ is a Gaussian or $s$-hashing $N$ and $p$ should be selected according to Remark \ref{remark: delta, mu}, i.e., by specifying $\hat{\mu }$ and $\hat{\delta}$ one obtains $N$ directly. 
\end{remark}
}

\section{Numerical results} \label{sec experiments}

\begin{figure*}[!t]
\centering

\begin{subfigure}{0.32\textwidth}
    \centering
    \includegraphics[width=\linewidth]{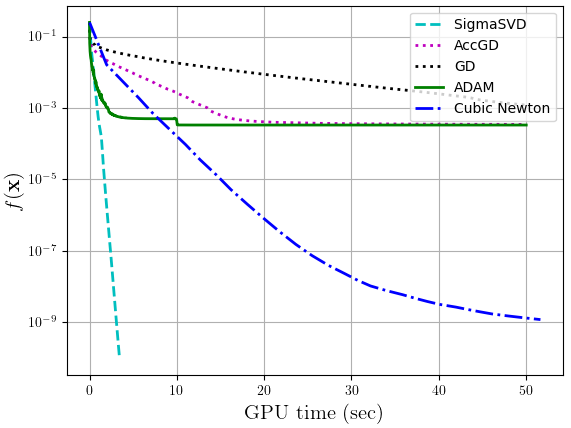}
    \caption{Gisette}
    \label{gis1}
\end{subfigure}
\hspace{1mm}
\begin{subfigure}{0.32\textwidth}
    \centering
    \includegraphics[width=\linewidth]{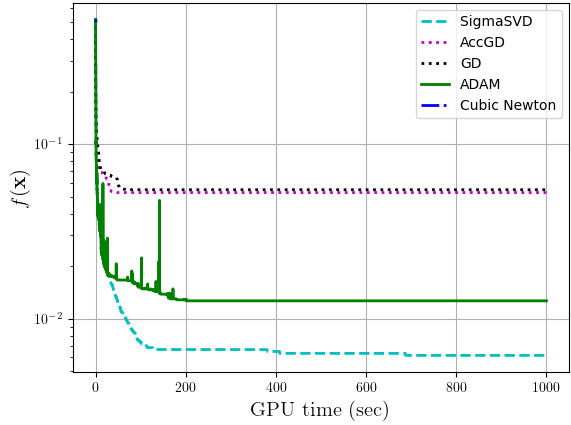}
    \caption{Gisette}
    \label{gis2}
\end{subfigure}
\hspace{1mm}
\begin{subfigure}{0.32\textwidth}
    \centering
    \includegraphics[width=\linewidth]{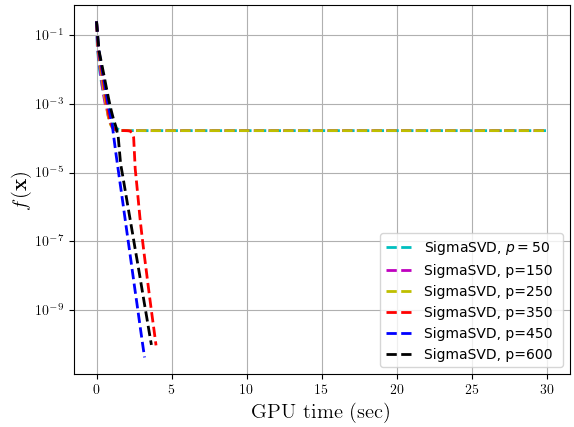}
    \caption{Gisette with $\w{x}_0 = 0$}
    \label{kappas}
\end{subfigure}

\caption{Non-convex minimization. All the methods in plot (a) are initialized at the origin, while in plot (b) the initializer is selected randomly by $\mathcal{N}(0,1)$. Plot (c) shows the convergence behavior of SigmaSVD for different values of $p$.}
\label{fig comparisons}
\end{figure*}

We are now ready to validate the efficiency of the proposed method on different machine learning models. We illustrate that the method is efficient and compares favorably with other state-of-the-art algorithms on a broad class of problems. In particular, we apply \cref{algorithm} on problems minimizing \textit{self-concordant} functions, \textit{strongly convex} functions with or without Lipschitz continuity and a \textit{non-convex} functions. In this section we only report the results for the non-convex cases. The additional numerical results together with a detailed description of the algorithms, training datasets, and the setup used to obtain the results appear in the Appendix \ref{sec extra experiments}.

\subsection{Non-linear least-squares}
Suppose that we are given a training set $\{ \w{a}_i, b_i \}_{i=1}^m$ with $\w{a}_i \in \R^n$ and $b_i \in [0,1]$ and consider solving the non-linear least-squares problem,
\begin{equation*}
    \min_{\w{x} \in \R^n} \frac{1}{m} \sum_{i=1}^m ( b_i - \phi(\w{a}_i^T \w{x}) )^2, \quad \phi \ : \ w \rightarrow  \frac{1}{1 + \exp(w)^{-1}}, 
\end{equation*}
where $\phi$ is a real function, known as sigmoid. Since we apply the mean squared error over the sigmoid function, the non-linear least-squares is a non-convex problem. We compare the performance of \cref{algorithm} against gradient descent (GD), accelerated gradient descent (AGD), cubic Newton and Adam. Although Adam, as a batch algorithm, is not directly comparable to SigmaSVD, we include it in our comparisons to demonstrate the advantages of second-order methods compared to the stochastic first-order method on problems with several saddle points and flat areas. For algorithm \ref{algorithm}, GD and AGD the Armijo rule is applied to determine the step size parameter with constants $\alpha = 0.001, \beta = 0.7$. A line search strategy is applied to select the regularization parameter of the Cubic Newton method (for details see \cite{nesterov2018lectures}). In all experiments we select the fixed eigenvalue threshold, i.e., $\nu = 10^{-10}$ in \eqref{eq: truncated sigma}. The momentum parameters for Adam are selected as suggested in \cite{kingma2014adam} while for AGD the momentum is selected \changes{as} 0.5.

In Figure \ref{fig comparisons} we demonstrate the performance of the different optimization methods on the non-linear least-squares problem for the Gisette dataset (see also Figure \ref{fig extra comparisons} in Appendix \ref{sec extra experiments} for simulations on different data regimes).
In Figures \ref{gis1} and \ref{kappas} we illustrate the reduction of the value function over GPU time. Observe that SigmaSVD is able to return a better solution compared to the first-order methods. Clearly, first-order methods are trapped in a flat area (Hessian and gradient are zero) and thus they are stuck far from the local or global minima. This is not a surprise since in such areas the gradient is almost zero and thus first-order methods are unable to progress and although in theory they have been shown to always escape even ill-conditioned saddles, here we see that they require an extremely large amount of iterations which makes them inefficient for practical applications. On the other hand, Cubic Newton is able to escape saddle points in one iteration, however we observed that in many experiments it converges rapidly to early local minima probably due to the lack of randomness. Moreover, as suggested in Theorem \ref{thm expects}, Figure \ref{fig comparisons} illustrates that SigmaSVD enjoys a fast local convergence rate and it is faster than its counterparts. It is also faster and has the ability to achieve lower training errors for different initialization points.


In Table \ref{tab comparison} and Figure \ref{kappas} we demonstrate how the choice of $N$ and $p$ affects the behavior of SigmaSVD around a saddle or flat area. We revisit the experiment of Figure \ref{gis2} (i.e., $\w{x}_0 = 0$) in which SigmaSVD escapes from such the flat area  in one iteration (similar to Cubic Newton). Figure \ref{kappas} shows the behavior of SigmaSVD for different values of $p$ with fixed $N=0.5n$. Observe that for very small values of $p$ SigmaSVD is trapped in the same saddle as the first-order, however for $p \geq 350$ it converges to the global minimum. Next, in Table \ref{tab comparison} we fix $p = 450$ and vary $N$ to show the escape rate probability from this region, or otherwise the probability of convergence to the global minimum over $50$ trials. Similarly to $p$, we see that the escape rate probability is proportional to $N$. Both experiments verify Theorem \ref{thm expects} which indicates that for very small values in $p$ and $N$, one should expect SigmaSVD to behave as a first-order method. Furthermore, Table \ref{tab comparison} and Figure \ref{fig comparisons} show that SigmaSVD is able to reach the behavior of the Cubic Newton method (that is, escape from the saddle point in one iteration) using only half of the dimensions of the problem and about $20\%$ of the eigenvalues of the reduced Hessian, which constitutes a significant improvement in total complexity as also depicted in Figure \ref{gis1}.

\begin{table}
\caption{Probability of successfully escaping from saddle points and convergence to the global minimum for various values of $N$ and fixed $p=450, \w{x}_0 = 0$. Each row shows the probability over $50$ trials.}
\centering
\begin{tabular}{|c|c|}
\hline
\multicolumn{2}{ |c| }{Escape Rate Probability - Gisette} \\
\hline
N &  Probability\\
\hline
$0.1n$ & $18\% $ \\
$0.13n$ & $46\% $ \\
$0.26n$ & $52\% $ \\
$0.36n$ & $66\% $ \\
$0.42n$ & $80\% $ \\
$0.46n$ & $92\% $ \\
\hline
\end{tabular}
\label{tab comparison}
\end{table}

\subsection{MNIST deep autoencoder}

In this section we consider training the MNIST deep autoencoder which is based on the work presented in \cite{hinton2006reducing}. The MNIST auto-encoder is considered a benchmark optimization problem because the presence of saddle points along its optimization trajectory poses significant challenges to optimization algorithms, see \cite{reddi2018generic} and reference therein. The MNIST auto-encoder consists of an encoder with layers-size $28 \times 28, 1000, 500, 250, 30$ that it is followed by a symmetric decoder and in total has $2.8$M parameters. Further, the sigmoid activation function is applied to all layers.

Here, we compare SigmaSVD against Adam. Since the MNIST autoencoder problem is large, to avoid memory issues, we adapt SigmaSVD for batch learning. In addition we use first-order momentum, namely $\beta_1 \in (0,1)$, similar to that of Adam. In particular, our new iterations have the following form
\begin{equation*}
    \w{x}_{k+1} = \w{x}_k - t_{k}\w{P} |{\w{Q}}_{H,k}^{-1}| \w{R} \hat{\w{m}}_{k+1},
\end{equation*}
where
\begin{equation*}
   \hat{\w{m}}_{k+1} = \frac{\w{m}_{k+1}}{1-\beta_1^k}, \quad \w{m}_{k+1} = \beta_1 \cdot \w{m}_k + (1 - \beta_1) \cdot \nabla f(\w{x}_{k}).
\end{equation*}
As a result, SigmaSVD with momentum effectively applies Adam's updates, substituting the Hessian for the second moment of the gradients (which is a diagonal matrix). Our goal is to demonstrate that the Hessian can be essential in deep architectures and that its diagonal approximations may severely slow down optimization algorithms or lead to suboptimal performance in the presence of saddles points and flat areas.

To obtain the results the batch size is set to $128$. Furthermore, for Adam, we set the learning rate to $0.0025$ (yields the best convergence rate) while keeping the momentum parameters at their default values \citep{kingma2014adam}. For SigmaSVD, we use two coarse models of different sizes: one with \changes{N = 2,800} and $t_k = 0.01$, and another with \changes{N = 1,400} and $t_k = 0.02$, where for both we used $\beta_1 = 0.9$. In both cases, we found that setting $\nu = 10^{-8}$ yielded the best performance. Additionally, we use a full eigenvalue decomposition instead of a truncated SVD. This is feasible because the coarse models are small, allowing for efficient computation. A full SVD is also advantageous as it eliminates the need to tune an extra hyperparameter ($p$ in this case). More importantly, selecting a larger $p$ generally leads to better escape rates near saddle points, \changes{as observed in Figure 1c.} We ran the experiments on a single A100 GPU with 40Gb RAM.

The results of this experiment are shown in Figure \ref{fig: mnist}. The left plot illustrates how quickly the algorithms reduce $f(\mathbf{x})$ over epochs. Clearly, both SigmaSVD variants outperform Adam in terms of convergence speed in the first $20$ epochs. This significant difference in convergence rate stems from the presence of multiple saddle points and flat regions along the optimization trajectory. In particular, during the early stages of training, we observed instances where the gradient norm was nearly zero (indicating saddle points) and cases where all eigenvalues of the reduced Hessian were zero (suggesting a possible flat region). SigmaSVD handles these situations efficiently, escaping quickly by leveraging second-order information, whereas Adam, relying only on a diagonal approximation, struggles in such ill-conditioned settings. Additionally, the right plot compares the test error, showing that the rapid convergence in training does not translate to overfitting. Additionally, note that the proposed methods achieved these results while updating only $1,400$ and $2,800$ parameters per iteration, respectively, whereas Adam updates all $2.8$M parameters. This highlights the significance of employing rich and informative preconditioners, such as the reduced Hessian, in optimizing ill-conditioned deep neural networks.
Further, regarding the wall-clock time comparison, Adam completes an epoch significantly faster ($18$ seconds) compared to the SigmaSVD variants ($600$ and $1,100$ seconds, respectively). However, comparing convergence based on GPU time would be unfair at this stage, as Adam is a highly optimized industrial algorithm, whereas SigmaSVD has been developed primarily for academic research. Nonetheless, this experiment suggests that, in order to design efficient algorithms for minimizing large neural networks with flat regions and saddle points, one should consider hybrid approaches, combining a computationally inexpensive method such as Adam when gradients are large, with a more sophisticated method such as SigmaSVD near saddle points and flat regions.

\begin{figure*}[!t]
\centering

\begin{minipage}{0.49\textwidth}
    \centering
    \includegraphics[width=\linewidth]{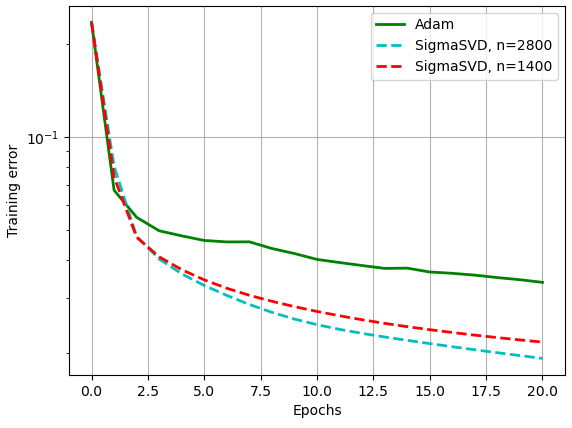}
    \caption*{MNIST}
\end{minipage}
\hfill
\begin{minipage}{0.49\textwidth}
    \centering
    \includegraphics[width=\linewidth]{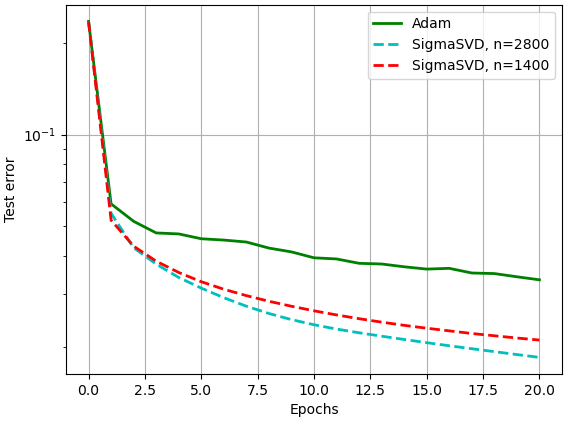}
    \caption*{MNIST}
\end{minipage}

\caption{Minimizing the MNIST deep autoencoder. Performance comparison between SigmaSVD and Adam. The left plot shows the reduction in the value of the objective function and the right plot the generalization performance versus epochs.}
\label{fig: mnist}
\end{figure*}

\section{Conclusion}
We develop a stochastic multilevel low-rank Newton type method that enjoys a super-linear convergence rate for self-concordant functions with low per-iteration cost. We further propose a variant that is applicable to non-convex problems and establish its linear rate when the PL inequality is satisfied. Preliminary numerical experiments show that our method is efficient for large-scale optimization problems with millions of dimensions. The experiments also show that the method is well-suited to problems with low-rank Hessian matrices. It is also faster and has an improved escape rate from saddles and flat-areas compared to first-order methods in non-convex cases. As a future direction, we aim to analyze the batch variant of our method and develop a hybrid approach that is efficient when training deep neural networks. We also plan to provide a convergence analysis for general non-convex functions.

\bibliography{main}

@inproceedings{karimi2016linear,
  title={Linear convergence of gradient and proximal-gradient methods under the {P}olyak-{L}ojasiewicz condition},
  author={Karimi, Hamed and Nutini, Julie and Schmidt, Mark},
  booktitle={Machine Learning and Knowledge Discovery in Databases: European Conference, ECML PKDD 2016, Riva del Garda, Italy, September 19-23, 2016, Proceedings, Part I 16},
  pages={795--811},
  year={2016},
  organization={Springer}
}

@article{belkin2021fit,
  title={Fit without fear: remarkable mathematical phenomena of deep learning through the prism of interpolation},
  author={Belkin, Mikhail},
  journal={Acta Numerica},
  volume={30},
  pages={203--248},
  year={2021},
  publisher={Cambridge University Press}
}

@article{dauphin2014identifying,
  title={Identifying and attacking the saddle point problem in high-dimensional non-convex optimization},
  author={Dauphin, Yann N and Pascanu, Razvan and Gulcehre, Caglar and Cho, Kyunghyun and Ganguli, Surya and Bengio, Yoshua},
  journal={Advances in neural information processing systems},
  volume={27},
  year={2014}
}

@article{tsipinakis2024multilevel,
  title={A multilevel method for self-concordant minimization},
  author={Tsipinakis, Nick and Parpas, Panos},
  journal={Journal of Optimization Theory and Applications},
  pages={1--51},
  year={2024},
  publisher={Springer}
}

@article{gower2019rsn,
  title={{RSN}: Randomized {S}ubspace {N}ewton},
  author={Gower, Robert and Kovalev, Dmitry and Lieder, Felix and Richt{\'a}rik, Peter},
  journal={Advances in Neural Information Processing Systems},
  volume={32},
  year={2019}
}

@article{pilanci2017newton,
  title={{N}ewton sketch: A near linear-time optimization algorithm with linear-quadratic convergence},
  author={Pilanci, Mert and Wainwright, Martin J},
  journal={SIAM Journal on Optimization},
  volume={27},
  number={1},
  pages={205--245},
  year={2017},
  publisher={SIAM}
}

@article{regier2017fast,
  title={Fast black-box variational inference through stochastic trust-region optimization},
  author={Regier, Jeffrey and Jordan, Michael I and McAuliffe, Jon},
  journal={Advances in Neural Information Processing Systems},
  volume={30},
  year={2017}
}

@article{pascanu2013revisiting,
  title={Revisiting natural gradient for deep networks},
  author={Pascanu, Razvan and Bengio, Yoshua},
  journal={arXiv preprint arXiv:1301.3584},
  year={2013}
}

@inproceedings{wu2017second,
  title={Second-order optimization for deep reinforcement learning using kronecker-factored approximation},
  author={Wu, Yuhuai and Mansimov, Elman and Grosse, Roger B and Liao, Shun and Ba, Jimmy},
  booktitle={NIPS},
  year={2017}
}

@article{singh2020woodfisher,
  title={Woodfisher: Efficient second-order approximation for neural network compression},
  author={Singh, Sidak Pal and Alistarh, Dan},
  journal={Advances in Neural Information Processing Systems},
  volume={33},
  pages={18098--18109},
  year={2020}
}

@book {MR2061575,
	AUTHOR = {Boyd, Stephen and Vandenberghe, Lieven},
	TITLE = {Convex optimization},
	PUBLISHER = {Cambridge University Press, Cambridge},
	YEAR = {2004},
	PAGES = {xiv+716},
	ISBN = {0-521-83378-7},
	MRCLASS = {90-01 (90C05 90C25 90C46 90C51)},
	MRNUMBER = {2061575},
	MRREVIEWER = {Dan Butnariu},
	DOI = {10.1017/CBO9780511804441},
	URL = {https://doi.org/10.1017/CBO9780511804441},
}

@article{bollapragada2019exact,
  title={Exact and inexact subsampled {N}ewton methods for optimization},
  author={Bollapragada, Raghu and Byrd, Richard H and Nocedal, Jorge},
  journal={IMA Journal of Numerical Analysis},
  volume={39},
  number={2},
  pages={545--578},
  year={2019},
  publisher={Oxford University Press}
}

@article{berahas2017investigation,
	title={An Investigation of {N}ewton-Sketch and Subsampled {N}ewton Methods},
	author={Berahas, Albert S and Bollapragada, Raghu and Nocedal, Jorge},
	journal={arXiv preprint arXiv:1705.06211},
	year={2017}
}

@book {MR1258086,
	AUTHOR = {Nesterov, Yurii and Nemirovskii, Arkadii},
	TITLE = {Interior-point polynomial algorithms in convex programming},
	SERIES = {SIAM Studies in Applied Mathematics},
	VOLUME = {13},
	PUBLISHER = {Society for Industrial and Applied Mathematics (SIAM),
	Philadelphia, PA},
	YEAR = {1994},
	PAGES = {x+405},
	ISBN = {0-89871-319-6},
	MRCLASS = {90-02 (65K05 90C25)},
	MRNUMBER = {1258086},
	MRREVIEWER = {E. G. Gol\cprime shte\u\i n},
	DOI = {10.1137/1.9781611970791},
	URL = {https://doi.org/10.1137/1.9781611970791},
}

@article{roosta2019sub,
  title={Sub-sampled {N}ewton methods},
  author={Roosta-Khorasani, Farbod and Mahoney, Michael W},
  journal={Mathematical Programming},
  volume={174},
  number={1},
  pages={293--326},
  year={2019},
  publisher={Springer}
}

@article {MR2738832,
	AUTHOR = {Gratton, Serge and Mouffe, M\'elodie and Sartenaer, Annick and
	Toint, Philippe L. and Tomanos, Dimitri},
	TITLE = {Numerical experience with a recursive trust-region method for
	multilevel nonlinear bound-constrained optimization},
	JOURNAL = {Optim. Methods Softw.},
	FJOURNAL = {Optimization Methods \& Software},
	VOLUME = {25},
	YEAR = {2010},
	NUMBER = {3},
	PAGES = {359--386},
	ISSN = {1055-6788},
	MRCLASS = {90C30 (90C55)},
	MRNUMBER = {2738832},
	DOI = {10.1080/10556780903239295},
	URL = {https://doi.org/10.1080/10556780903239295},
}

@inproceedings{erdogdu2015convergence,
	title={Convergence rates of sub-sampled {N}ewton methods},
	author={Erdogdu, Murat A and Montanari, Andrea},
	booktitle={Proceedings of the 28th International Conference on Neural Information Processing Systems-Volume 2},
	pages={3052--3060},
	year={2015},
	organization={MIT Press}
}

@article {MR2249884,
	AUTHOR = {Drineas, Petros and Mahoney, Michael W.},
	TITLE = {On the {N}ystr\"om method for approximating a {G}ram matrix for
	improved kernel-based learning},
	JOURNAL = {J. Mach. Learn. Res.},
	FJOURNAL = {Journal of Machine Learning Research (JMLR)},
	VOLUME = {6},
	YEAR = {2005},
	PAGES = {2153--2175},
	ISSN = {1532-4435},
	MRCLASS = {68T05 (62H30 68W20)},
	MRNUMBER = {2249884},
}

@article {MR2806637,
	AUTHOR = {Halko, Nathan and Martinsson, Per-Gunnar and Tropp, Joel A},
	TITLE = {Finding structure with randomness: probabilistic algorithms
	for constructing approximate matrix decompositions},
	JOURNAL = {SIAM Rev.},
	FJOURNAL = {SIAM Review},
	VOLUME = {53},
	YEAR = {2011},
	NUMBER = {2},
	PAGES = {217--288},
	ISSN = {0036-1445},
	MRCLASS = {65F30 (60B20 68W20)},
	MRNUMBER = {2806637},
	MRREVIEWER = {Thomas K. Huckle},
	DOI = {10.1137/090771806},
	URL = {https://doi.org/10.1137/090771806},
}

@article {MR2587737,
	AUTHOR = {Wen, Zaiwen and Goldfarb, Donald},
	TITLE = {A line search multigrid method for large-scale nonlinear
	optimization},
	JOURNAL = {SIAM J. Optim.},
	FJOURNAL = {SIAM Journal on Optimization},
	VOLUME = {20},
	YEAR = {2009},
	NUMBER = {3},
	PAGES = {1478--1503},
	ISSN = {1052-6234},
	MRCLASS = {90C30 (90C06)},
	MRNUMBER = {2587737},
	MRREVIEWER = {Hiroshi Yabe},
	DOI = {10.1137/08071524X},
	URL = {https://doi.org/10.1137/08071524X},
}

@book {MR1774296,
	AUTHOR = {Briggs, William L. and Henson, Van Emden and McCormick, Steve
	F.},
	TITLE = {A multigrid tutorial},
	EDITION = {Second},
	PUBLISHER = {Society for Industrial and Applied Mathematics (SIAM),
	Philadelphia, PA},
	YEAR = {2000},
	PAGES = {xii+193},
	ISBN = {0-89871-462-1},
	MRCLASS = {65-01 (65M55 65N55)},
	MRNUMBER = {1774296},
	MRREVIEWER = {Mohammad Asadzadeh},
	DOI = {10.1137/1.9780898719505},
	URL = {https://doi.org/10.1137/1.9780898719505},
}

@article{paternain2019newton,
  title={A {N}ewton-based method for nonconvex optimization with fast evasion of saddle points},
  author={Paternain, Santiago and Mokhtari, Aryan and Ribeiro, Alejandro},
  journal={SIAM Journal on Optimization},
  volume={29},
  number={1},
  pages={343--368},
  year={2019},
  publisher={SIAM}
}

@article{o2019inexact,
  title={Inexact {N}ewton methods for stochastic nonconvex optimization with applications to neural network training},
  author={O'Leary-Roseberry, Thomas and Alger, Nick and Ghattas, Omar},
  journal={arXiv preprint arXiv:1905.06738},
  year={2019}
}

@inproceedings{hanzely2020stochastic,
  title={Stochastic subspace cubic {N}ewton method},
  author={Hanzely, Filip and Doikov, Nikita and Nesterov, Yurii and Richtarik, Peter},
  booktitle={International Conference on Machine Learning},
  pages={4027--4038},
  year={2020},
  organization={PMLR}
}

@book {MR2142598,
	AUTHOR = {Nesterov, Yurii},
	TITLE = {Introductory lectures on convex optimization},
	SERIES = {Applied Optimization},
	VOLUME = {87},
	NOTE = {A basic course},
	PUBLISHER = {Kluwer Academic Publishers, Boston, MA},
	YEAR = {2004},
	PAGES = {xviii+236},
	ISBN = {1-4020-7553-7},
	MRCLASS = {90-02 (90-01 90C25)},
	MRNUMBER = {2142598},
	DOI = {10.1007/978-1-4419-8853-9},
	URL = {https://doi.org/10.1007/978-1-4419-8853-9},
}

@book{nesterov2018lectures,
  title={Lectures on convex optimization},
  author={Nesterov, Yurii and others},
  volume={137},
  year={2018},
  publisher={Springer}
}

@article {MR3395393,
	AUTHOR = {Galun, Meirav and Basri, Ronen and Yavneh, Irad},
	TITLE = {Review of methods inspired by algebraic-multigrid for data and
	image analysis applications},
	JOURNAL = {Numer. Math. Theory Methods Appl.},
	FJOURNAL = {Numerical Mathematics. Theory, Methods and Applications},
	VOLUME = {8},
	YEAR = {2015},
	NUMBER = {2},
	PAGES = {283--312},
	ISSN = {1004-8979},
	MRCLASS = {65N55 (65F10 68U10)},
	MRNUMBER = {3395393},
	MRREVIEWER = {Alexander N. Malyshev},
	DOI = {10.4208/nmtma.2015.w14si},
	URL = {https://doi.org/10.4208/nmtma.2015.w14si},
}

@article {MR3572365,
	AUTHOR = {Hovhannisyan, Vahan and Parpas, Panos and Zafeiriou, Stefanos},
	TITLE = {M{AGMA}: multilevel accelerated gradient mirror descent
	algorithm for large-scale convex composite minimization},
	JOURNAL = {SIAM J. Imaging Sci.},
	FJOURNAL = {SIAM Journal on Imaging Sciences},
	VOLUME = {9},
	YEAR = {2016},
	NUMBER = {4},
	PAGES = {1829--1857},
	ISSN = {1936-4954},
	MRCLASS = {94A08 (90C25 90C90)},
	MRNUMBER = {3572365},
	MRREVIEWER = {Kurt Marti},
	DOI = {10.1137/15M104013X},
	URL = {https://doi.org/10.1137/15M104013X},
}

@book {MR2978290,
	AUTHOR = {Horn, Roger A. and Johnson, Charles R.},
	TITLE = {Matrix analysis},
	EDITION = {Second},
	PUBLISHER = {Cambridge University Press, Cambridge},
	YEAR = {2013},
	PAGES = {1-643},
	ISBN = {978-0-521-54823-6},
	MRCLASS = {15-01},
	MRNUMBER = {2978290},
	MRREVIEWER = {Mohammad Sal Moslehian},
}

@article{ho2019newton,
  title={{N}ewton-type multilevel optimization method},
  author={Ho, Chin Pang and Ko{\v{c}}vara, Michal and Parpas, Panos},
  journal={Optimization Methods and Software},
  pages={1--34},
  year={2019},
  publisher={Taylor \& Francis}
}

@article{gittens2011spectral,
  title={The spectral norm error of the naive {N}ystrom extension},
  author={Gittens, Alex},
  journal={arXiv preprint arXiv:1110.5305},
  year={2011}
}

@inproceedings{smola2000sparse,
  title={Sparse greedy matrix approximation for machine learning.},
  author={Smola, AJ and Sch{\"o}lkopf, B},
  booktitle={Seventeenth International Conference on Machine Learning (ICML 2000)},
  pages={911--918},
  year={2000},
  organization={Morgan Kaufmann}
}

@article{williams2000using,
  title={Using the {N}ystr{\"o}m method to speed up kernel machines},
  author={Williams, Christopher and Seeger, Matthias},
  journal={Advances in neural information processing systems},
  volume={13},
  year={2000}
}

@article{kingma2014adam,
  title={Adam: A method for stochastic optimization},
  author={Kingma, Diederik P and Ba, Jimmy},
  journal={arXiv preprint arXiv:1412.6980},
  year={2014}
}

@article{marteau2019globally,
  title={Globally convergent {N}ewton methods for ill-conditioned generalized self-concordant losses},
  author={Marteau-Ferey, Ulysse and Bach, Francis and Rudi, Alessandro},
  journal={Advances in Neural Information Processing Systems},
  volume={32},
  year={2019}
}

@article{cartis2022randomised,
  title={Randomised subspace methods for non-convex optimization, with applications to nonlinear least-squares},
  author={Cartis, Coralia and Fowkes, Jaroslav and Shao, Zhen},
  journal={arXiv preprint arXiv:2211.09873},
  year={2022}
}

@article{hinton2006reducing,
  title={Reducing the dimensionality of data with neural networks},
  author={Hinton, Geoffrey E and Salakhutdinov, Ruslan R},
  journal={science},
  volume={313},
  number={5786},
  pages={504--507},
  year={2006},
  publisher={American Association for the Advancement of Science}
}

@article{cartis2024randomized,
  title={Randomized Subspace Derivative-Free Optimization with Quadratic Models and Second-Order Convergence},
  author={Cartis, Coralia and Roberts, Lindon},
  journal={arXiv preprint arXiv:2412.14431},
  year={2024}
}

@article{cartis2025random,
  title={Random Subspace Cubic-Regularization Methods, with Applications to Low-Rank Functions},
  author={Cartis, Coralia and Shao, Zhen and Tansley, Edward},
  journal={arXiv preprint arXiv:2501.09734},
  year={2025}
}

@article{kane2014sparser,
  title={Sparser {J}ohnson-{L}indenstrauss transforms},
  author={Kane, Daniel M and Nelson, Jelani},
  journal={Journal of the ACM (JACM)},
  volume={61},
  number={1},
  pages={1--23},
  year={2014},
  publisher={ACM New York, NY, USA}
}

@inproceedings{reddi2018generic,
  title={A generic approach for escaping saddle points},
  author={Reddi, Sashank and Zaheer, Manzil and Sra, Suvrit and Poczos, Barnabas and Bach, Francis and Salakhutdinov, Ruslan and Smola, Alex},
  booktitle={International conference on artificial intelligence and statistics},
  pages={1233--1242},
  year={2018},
  organization={PMLR}
}

@article{na2023hessian,
  title={Hessian averaging in stochastic {N}ewton methods achieves superlinear convergence},
  author={Na, Sen and Derezi{\'n}ski, Micha{\l} and Mahoney, Michael W},
  journal={Mathematical Programming},
  volume={201},
  number={1},
  pages={473--520},
  year={2023},
  publisher={Springer}
}

@article{fuji2025theoretical,
  title={Theoretical analysis of the randomized subspace regularized {N}ewton method for non-convex optimization},
  author={Fuji, Terunari and Poirion, Pierre-Louis and Takeda, Akiko},
  journal={Open Journal of Mathematical Optimization},
  volume={6},
  pages={1--35},
  year={2025}
}

@article{shang2024accelerated,
  title={Accelerated double-sketching subspace {N}ewton},
  author={Shang, Jun and Ye, Haishan and Chang, Xiangyu},
  journal={European Journal of Operational Research},
  volume={319},
  number={2},
  pages={484--493},
  year={2024},
  publisher={Elsevier}
}
\bibliographystyle{tmlr}

\appendix
\section{Background} \label{appendix: background}
Throughout this paper, all vectors are denoted with bold lowercase letters, i.e., $\w{x} \in \mathbb{R}^n$ and all matrices with bold uppercase letters, i.e., $\w{A} \in \mathbb{R}^{m \times n}$. The function $\| \w{x} \|_2 = \langle \w{x}^T, \w{x} \rangle^{1/2} $ is the $\ell_2$- or Euclidean norm of $\w{x}$. The spectral norm of $\w{A}$ is the norm induced by the $\ell_2$-norm on $\mathbb{R}^n$ and it is defined as $\| \w{A} \|_2 := \underset{\| \w{x} \|_2 = 1}{\max} \|\w{A} \w{x} \|_2$. It can be shown that $\| \w{A} \|_2 = \sigma_1(\w{A})$, where $\sigma_1(\w{A})$ (or simply $\sigma_1$) is the largest singular value of $\w{A}$, see \cite[Section 5.6]{MR2978290}. For two symmetric matrices $\w{A}$ and $\w{B}$ we write $\w{A} \succeq \w{B} $ when $\w{x}^T (\w{A} - \w{B}) \w{x} \geq 0$ for all $\w{x} \in \mathbb{R}^n$, or otherwise when the matrix $\w{A} - \w{B}$ is positive  semi-definite. Below we present the main properties and inequalities for self-concordant functions. For a complete analysis see \cite{MR2142598, MR2061575}. A univariate convex function $\phi: \mathbb{R} \rightarrow \mathbb{R}$ is called self-concordant with constant $M_{\phi} \geq 0$ if,
\begin{equation} \label{selfconcord definition}
 | \phi'''(x) | \leq M_{\phi} \phi''(x)^{3/2}.
\end{equation}
Examples of self-concordant functions include but not limited to linear, convex quadratic, negative logarithmic and negative log-determinant function. 
\blue{Moreover, the class of self-concordant functions includes $\mu$-strongly convex functions with $L$-Lipschitz continuous Hessians, as the latter are self-concordant with constant $M_{\phi} = L / (2\mu^{3/2})$. Therefore, if $f$ is three-times differentiable, the self-concordance assumption is less restrictive than the classical assumptions, see \cite{nesterov2018lectures}.
}

Based on \cref{selfconcord definition}, a multivariate convex function $f : \mathbb{R}^n \rightarrow \mathbb{R} $ is called self-concordant if $\phi(t) := f(\w{x} + t\w{u})$ satisfies \eqref{selfconcord definition} for all $\w{x} \in \operatorname{dom} f, \w{u} \in \mathbb{R}^n$ and $t \in \mathbb{R}$ such that $\w{x} +t \w{u} \in \operatorname{dom} f $. Further, self-concordance is preserved under composition with any affine function. In addition, for any convex self-concordant function $\tilde{\phi}$ with constant  $M_{\tilde{\phi}} \geq 0$ it can be shown that $\phi(x) := \frac{M_{\tilde{\phi}}^2}{4} \tilde{\phi}(x)$ is self-concordant with constant $M_\phi = 2$. 
Next, given $\w{x} \in \operatorname{dom}f$ and assuming that $\nabla^2 f(\w{x})$ is positive-definite we can define the following norms
\begin{equation} \label{definition norm_x}
 \| \w{u} \|_\w{x} := \langle \nabla^2 f(\w{x}) \w{u}, \w{u}  \rangle^{1/2} \ \ \text{and} \ \ 
 \| \w{v} \|_\w{x}^* := \langle [\nabla^2 f(\w{x})]^{-1} \w{v}, \w{v}  \rangle^{1/2},
\end{equation}
for which it holds that $ | \langle \w{u}, \w{v}  \rangle | \leq \| \w{u} \|_\w{x}^* \| \w{v} \|_\w{x}$. Therefore, the Newton decrement can be written
as 
\begin{equation} \label{newton decrement}
 \lambda_f(\w{x}) := \| \nabla f(\w{x}) \|_\w{x}^* = \|[\nabla^2 f(\w{x})]^{-1/2} \nabla f(\w{x})  \|_2.
\end{equation}
In addition, we take into consideration two auxiliary functions, both introduced in \cite{MR2142598}. Define the functions $\omega$ and 
$\omega_*$ such that
\begin{equation} \label{omegas}
 \omega(x) := x - \log(1+x) \quad \text{and} \quad \omega_*(x) := -x - \log(1-x),
\end{equation}
where $ \operatorname{dom} \omega = \{ x \in \mathbb{R} : x\geq0 \}$ and $\operatorname{dom}\omega_* = \{ x \in \mathbb{R} : 0 \leq x < 1 \}$, 
respectively, and $\log(x)$ denotes the natural logarithm of $x$. Moreover, note that both functions are convex and their range is the set of positive real numbers.  Further, from the definition \eqref{selfconcord definition} for $M_{\phi} = 2$, we have that 
\begin{equation*}
 \left|  \frac{d}{dt} \left( \phi''(t)^{-1/2} \right) \right| \leq 1,
\end{equation*}
from which, after integration, we obtain the following bounds
\begin{equation} \label{relation univariate sandwitz}
  \frac{\phi''(0)}{(1 + t \phi''(0)^{1/2})^2} \leq \phi''(t) \leq \frac{\phi''(0)}{(1 - t \phi''(0)^{1/2})^2}
\end{equation}
where the lower and the uppers bounds hold for $t\geq0$ and $t \in [0, \phi''(0)^{-1/2})$,  with $t \in \operatorname{dom}\phi$, respectively 
(see also \cite{MR2061575}). Consider now self-concordant functions on $\mathbb{R}^n$ and let  
$ S(\w{x}) = \left\lbrace \w{y} \in \mathbb{R}^n : \| \w{y} - \w{x} \|_\w{x} < 1 \right\rbrace$. For any $\w{x} \in \operatorname{dom}f$ and 
$\w{y} \in S(\w{x})$, we have that (\cite{MR2142598})
\begin{equation} \label{relation sandwitz}
   (1 - \| \w{y} - \w{x} \|_\w{x})^2 \nabla^2 f(\w{x}) \preceq \nabla^2 f(\w{y}) \preceq \frac{1}{(1 - \| \w{y} - \w{x} \|_\w{x})^2} 
   \nabla^2 f(\w{x}).
\end{equation}
The following results will be used later for the convergence analysis of the algorithms. For their proofs see \cite{MR2142598}.
\begin{lemma}[\citep{MR2142598}] \label{lemma f reduction}
Let $\w{x}, \w{y} \in \operatorname{dom}f$. If $\|\w{y} - \w{x} \|_x < 1$, then,
\begin{equation*}
    f(\w{y}) \leq f(\w{x}) + \langle \nabla f(\w{x}),  \w{y} - \w{x} \rangle + \omega_*(\|\w{y} - \w{x} \|_\w{x})
\end{equation*}
\end{lemma}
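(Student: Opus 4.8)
The plan is to reduce this multivariate statement to a one-dimensional integration along the segment joining $\w{x}$ and $\w{y}$, and then to exploit the univariate curvature bound already recorded in \eqref{relation univariate sandwitz}. Concretely, I would introduce the restriction $\phi(t) := f(\w{x} + t(\w{y}-\w{x}))$ for $t \in [0,1]$. Because self-concordance is preserved under composition with an affine map, $\phi$ is a univariate self-concordant function (with the normalized constant $M_\phi = 2$ underlying \eqref{relation univariate sandwitz}), so all the one-dimensional machinery applies. The boundary data are immediate from the chain rule: $\phi(0) = f(\w{x})$, $\phi(1) = f(\w{y})$, $\phi'(0) = \langle \nabla f(\w{x}), \w{y}-\w{x}\rangle$, and $\phi''(0) = \langle \nabla^2 f(\w{x})(\w{y}-\w{x}), \w{y}-\w{x}\rangle = \|\w{y}-\w{x}\|_\w{x}^2$. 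Writing $r := \|\w{y}-\w{x}\|_\w{x} = \phi''(0)^{1/2}$, the hypothesis reads $r < 1$.

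Next I would invoke the upper half of \eqref{relation univariate sandwitz}, namely $\phi''(t) \le \phi''(0)/(1 - t\,\phi''(0)^{1/2})^2 = r^2/(1-tr)^2$, valid for $t \in [0, r^{-1})$. Here is the one spot where the hypothesis is genuinely used: since $r < 1$ we have $r^{-1} > 1$, so the entire interval $[0,1]$ lies inside the domain of validity of the bound. Integrating this inequality once from $0$ to $t$ gives $\phi'(t) - \phi'(0) \le r^2 t/(1-tr)$, and integrating the result a second time from $0$ to $1$ yields
\begin{equation*}
\phi(1) - \phi(0) \;\le\; \phi'(0) + \int_0^1 \frac{r^2 t}{1-tr}\,dt .
\end{equation*}
The remaining task is to evaluate this last integral in closed form and recognize it. A substitution $u = 1-tr$ turns it into $-r - \log(1-r)$, which is exactly $\omega_*(r)$ by the definition in \eqref{omegas}. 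Substituting the boundary data back then reproduces the claimed bound $f(\w{y}) \le f(\w{x}) + \langle \nabla f(\w{x}), \w{y}-\w{x}\rangle + \omega_*(\|\w{y}-\w{x}\|_\w{x})$.

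I do not expect a serious conceptual obstacle here, since the argument is the standard ``integrate the self-concordant curvature bound twice'' technique; the work is mostly bookkeeping. The two points requiring a little care are (i) confirming that the upper bound in \eqref{relation univariate sandwitz} is the correct one to integrate (the lower bound would produce $\omega$ rather than $\omega_*$, corresponding to the companion lower estimate on $f(\w{y})$), and (ii) carrying out the double integration cleanly so that the antiderivative is correctly identified with $-r - \log(1-r)$. If anything is the ``hard part,'' it is only the verification that $t=1$ remains admissible for the upper curvature estimate, which is precisely what the condition $\|\w{y}-\w{x}\|_\w{x} < 1$ guarantees.
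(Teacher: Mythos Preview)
Your argument is correct and is precisely the standard ``restrict to a line and integrate the curvature bound twice'' derivation that appears in Nesterov's text. Note, however, that the paper does not supply its own proof of this lemma: it is stated with the citation \cite{MR2142598} and the surrounding text explicitly says ``For their proofs see \cite{MR2142598}.'' So there is nothing to compare against beyond observing that your proposal reproduces the textbook proof faithfully, including the correct use of the hypothesis $\|\w{y}-\w{x}\|_\w{x}<1$ to ensure $t=1$ lies in the domain of validity of the upper estimate in \eqref{relation univariate sandwitz}.
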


\begin{lemma}[\cite{MR2142598}] \label{lemma suboptimality nesterov}
Let $\w{x}, \w{y} \in \operatorname{dom}f$. Then,
\begin{equation*}
     f(\w{y}) \geq f(\w{x}) + \langle \nabla f(\w{x}), \w{y} - \w{x}  \rangle + \omega(\| \nabla f(\w{y}) - \nabla f(\w{x}) \|_\w{y}^*).
\end{equation*}
If in addition $\| \nabla f(\w{y}) - \nabla f(\w{x}) \|_\w{y}^* < 1$, then,
\begin{equation*}
     f(\w{y}) \leq f(\w{x}) + \langle \nabla f(\w{x}), \w{y} - \w{x}  \rangle + \omega_*(\| \nabla f(\w{y}) - \nabla f(\w{x}) \|_\w{y}^*).
\end{equation*}
\end{lemma}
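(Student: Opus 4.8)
The statement is the pair of self-concordant sandwich bounds measured in the dual (gradient) norm at $\w{y}$; write $r := \|\nabla f(\w{y}) - \nabla f(\w{x})\|_\w{y}^*$. The plan is to reduce both inequalities to the \emph{primal-norm} sandwich, i.e.\ to the pair $\omega(\|\w{z}-\w{w}\|_\w{w}) \le f(\w{z}) - f(\w{w}) - \langle \nabla f(\w{w}), \w{z}-\w{w}\rangle \le \omega_*(\|\w{z}-\w{w}\|_\w{w})$, where the upper inequality is exactly Lemma~\ref{lemma f reduction} (valid when $\|\w{z}-\w{w}\|_\w{w}<1$) and the lower inequality is its natural companion. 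Both halves are available from the excerpt: restricting to the segment $\phi(t)=f(\w{w}+t(\w{z}-\w{w}))$ and integrating the univariate Hessian bounds \eqref{relation univariate sandwitz} twice over $[0,1]$ produces precisely $\omega(\|\w{z}-\w{w}\|_\w{w})$ from the lower Hessian bound and $\omega_*(\|\w{z}-\w{w}\|_\w{w})$ from the upper one, using $\phi''(0)=\|\w{z}-\w{w}\|_\w{w}^2$.

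The transfer from the primal to the dual norm is where the content lies, and I would carry it out through Fenchel conjugation. Let $f^*$ be the conjugate of $f$; because $f$ is strictly convex, self-concordant, with positive-definite Hessian, $f^*$ is again self-concordant (with the same normalization), the gradient map $\nabla f$ is a bijection onto $\operatorname{dom} f^*$ with inverse $\nabla f^*$, and $\nabla^2 f^*(\nabla f(\w{z})) = [\nabla^2 f(\w{z})]^{-1}$. Set $\w{s}:=\nabla f(\w{x})$ and $\w{t}:=\nabla f(\w{y})$, so that $\nabla f^*(\w{s})=\w{x}$ and $\nabla f^*(\w{t})=\w{y}$. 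Two identities do the work: first, the primal local norm of $\w{s}-\w{t}$ at $\w{t}$ for $f^*$ equals $r$, since $\nabla^2 f^*(\w{t})=[\nabla^2 f(\w{y})]^{-1}$; second, using $f^*(\w{s})=\langle\w{s},\w{x}\rangle - f(\w{x})$, $f^*(\w{t})=\langle\w{t},\w{y}\rangle - f(\w{y})$ and $\nabla f^*(\w{t})=\w{y}$, the Bregman-type quantity $f^*(\w{s}) - f^*(\w{t}) - \langle \nabla f^*(\w{t}), \w{s}-\w{t}\rangle$ simplifies exactly to $f(\w{y}) - f(\w{x}) - \langle \nabla f(\w{x}), \w{y}-\w{x}\rangle$. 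Applying the primal sandwich for $f^*$ at base point $\w{t}$ with target $\w{s}$ then reads $\omega(r) \le f(\w{y}) - f(\w{x}) - \langle\nabla f(\w{x}),\w{y}-\w{x}\rangle \le \omega_*(r)$, the upper inequality requiring $r<1$, which is precisely the claim.

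The main obstacle is justifying the conjugate machinery rather than the algebra: one must establish that $f^*$ is self-concordant and that the gradient/Hessian correspondence holds on the relevant domain, which is standard for nondegenerate self-concordant functions but is not recorded in the excerpt. If one prefers to avoid conjugation, I would instead introduce the auxiliary function $\Phi(\w{z}):=f(\w{z}) - \langle\nabla f(\w{x}),\w{z}\rangle$, which is self-concordant with the same Hessian as $f$, is minimized at $\w{x}$ (since $\nabla\Phi(\w{x})=0$), and satisfies $\Phi(\w{y})-\min\Phi = f(\w{y})-f(\w{x})-\langle\nabla f(\w{x}),\w{y}-\w{x}\rangle$ with Newton decrement $\lambda_\Phi(\w{y})=r$; the statement is then exactly the suboptimality-gap bound $\omega(\lambda_\Phi(\w{y})) \le \Phi(\w{y})-\min\Phi \le \omega_*(\lambda_\Phi(\w{y}))$. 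I note that a single Newton-step estimate is not sharp enough to recover the exact constants here (it yields the bound $r^2+r+\log(1-r)$, which is strictly weaker than $\omega(r)$ already at $r=\tfrac12$), so the self-contained route must integrate the decrease along the full Newton sequence (or the continuous Newton path) and sum the resulting telescoping bound; verifying that this summation collapses exactly to $\omega$ and $\omega_*$ is the delicate step.
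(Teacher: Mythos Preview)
The paper does not give its own proof of this lemma; it is quoted from \cite{MR2142598} with the remark ``For their proofs see \cite{MR2142598}.'' So there is nothing to compare against directly, and what matters is whether your argument is sound.

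Your Fenchel-conjugate route is correct and is in fact one of the standard ways the result is established: the Bregman identity $D_{f^*}(\w{s},\w{t}) = f(\w{y})-f(\w{x})-\langle\nabla f(\w{x}),\w{y}-\w{x}\rangle$ and the norm correspondence $\|\w{s}-\w{t}\|_{\w{t}}^{f^*}=r$ reduce the claim to the primal sandwich for the self-concordant function $f^*$. You are right that self-concordance of $f^*$ is the nontrivial input here; it holds for nondegenerate self-concordant $f$ and is recorded in \cite{MR2142598}.

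On your alternative via $\Phi(\w{z})=f(\w{z})-\langle\nabla f(\w{x}),\w{z}\rangle$: the reduction is exactly right, but you are too pessimistic about the last step. No Newton iteration or path integration is needed. From the primal lower bound $\Phi(\w{z})\ge\Phi(\w{y})+\langle\nabla\Phi(\w{y}),\w{z}-\w{y}\rangle+\omega(\|\w{z}-\w{y}\|_\w{y})$, minimize the right-hand side over $\w{z}$; writing $t=\|\w{z}-\w{y}\|_\w{y}$ and optimizing the direction first, one minimizes $-t\lambda_\Phi(\w{y})+\omega(t)$ over $t\ge0$, and the minimum is exactly $-\omega_*(\lambda_\Phi(\w{y}))$, which yields $\Phi(\w{y})-\min\Phi\le\omega_*(\lambda_\Phi(\w{y}))$ for $\lambda_\Phi(\w{y})<1$. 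Symmetrically, minimizing the primal upper bound $\Phi(\w{z})\le\Phi(\w{y})+\langle\nabla\Phi(\w{y}),\w{z}-\w{y}\rangle+\omega_*(\|\w{z}-\w{y}\|_\w{y})$ over $\w{z}$ in the Dikin ball gives $-t\lambda_\Phi(\w{y})+\omega_*(t)$, whose minimum is $-\omega(\lambda_\Phi(\w{y}))$, hence $\Phi(\w{y})-\min\Phi\ge\omega(\lambda_\Phi(\w{y}))$ unconditionally. The exact constants fall out of the conjugacy relation between $\omega$ and $\omega_*$; your ``single Newton-step'' calculation is a different, strictly looser bound and is not the route to take here.
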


Throughout this paper, we refer to notions such as 
super-linear and quadratic convergence rates. Denote 
$\w{x}_k$ the iterate generated by an iterative process at 
the $k^{\text{th}}$ iteration. The sub-optimality gap of 
the Newton method for self-concordant function satisfies 
the bound 
$f(\w{x}_k) - f(\w{x}^*) \leq \lambda_f(\w{x}_k)^2$ which 
holds for $\lambda_f(\w{x}_k) \leq 0.68$ (\cite{MR2061575}) and thus one can estimate the convergence rate in terms of the local norm of the gradient. It is known that the Newton method achieves a local quadratic convergence rate. In this setting, we say that a process converges quadratically if $\lambda_f(\w{x}_{k+1}) \leq R \lambda_f(\w{x}_{k})^2$, for $R > 0$. In addition to quadratic 
converge rate, we say that a process converges with super-linear rate if $\lambda_f(\w{x}_{k+1}) / \lambda_f(\w{x}_{k}) \leq R(k)$, where $R(k) \downarrow 0$.

\section{Proof of Theorem \ref{thm svd phases}} \label{Appendix: theorem low rank Newton}

\changes{The} update rule of the Low-Rank Newton method at iteration $k$\changes{\ is}
\begin{equation} \label{gama scheme}
    \w{x}_{k+1} = \w{x}_{k} + t_k \hat{\w{d}}_{h,k},
\end{equation}
where for the purposes of this section we use  $\hat{\w{d}}_{h,k} := - \w{Q}_{h,k}^{-1} \nabla f (\w{x}_k)$,  \changes{where $\w{Q}_{h,k}^{-1}$} is defined in \changes{equation} \ref{svd Q}. Define also the corresponding approximate Newton decrement $\bar{\lambda}(\w{x}) := ( \nabla f (\w{x}_k)^T \w{Q}_{h,k}^{-1} \nabla f (\w{x}_k))^{\frac{1}{2}}$.

\begin{lemma}
For any $k \in \mathbb{N}$ it holds that
\begin{align}
        & \bar{\lambda}(\w{x}_k)^2 = - \nabla f (\w{x}_k)^T \hat{\w{d}}_{h,k} \label{eq: svd full decr1} \\
    &  \sqrt{\frac{\sigma_{k, n}}{\sigma_{k, N+1}}} \lambda(\w{x}_k) \leq \bar{\lambda}(\w{x}_k) \leq \lambda(\w{x}_k) \label{eq: svd full decr2}\\
    & \| \hat{\w{d}}_{h,k} \|_{\w{x}} \leq \bar{\lambda}(\w{x}_k) \label{eq: svd full decr3}\\
    & \left\| [\nabla^2 f ({\w{x}_{k}})]^{1/2} \left(
			   \w{\hat{d}}_{h,k} - \w{d}_{h,k} \right) \right\|_2 \leq \left(1-\frac{\sigma_{k, n}}{\sigma_{k, N+1}}\right) \lambda(\w{x}_k). \label{ineq norm d's svd}
\end{align}
where $\| \cdot \|_{\w{x}}$ is defined in \eqref{definition norm_x}.
\end{lemma}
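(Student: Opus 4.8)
The plan is to fix the eigenbasis of the Hessian and reduce every claim to a scalar comparison of eigenvalues, exploiting the fact that $\w{Q}_{h,k}^{-1}$ shares its eigenvectors with $\nabla^2 f(\w{x}_k)$. Write $\w{H} := \nabla^2 f(\w{x}_k) = \w{U}\w{\Sigma}\w{U}^T$ with $\w{U}$ orthogonal and $\w{\Sigma} = \operatorname{diag}(\sigma_1,\ldots,\sigma_n)$, $\sigma_1 \geq \cdots \geq \sigma_n > 0$ (suppressing the iteration index). By the construction in \eqref{svd Q}, $\w{Q}_{h,k}^{-1}$ has the same eigenvectors $\w{U}$ with eigenvalues $1/\sigma_i$ for $i \leq N$ and $1/\sigma_{N+1}$ for $i > N$; the exact Newton direction is $\w{d}_{h,k} = -\w{H}^{-1}\nabla f(\w{x}_k)$. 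Setting $\tilde{\w{g}} := \w{U}^T \nabla f(\w{x}_k)$, every quadratic form becomes a weighted sum over eigen-components. Item \eqref{eq: svd full decr1} is then immediate, since $-\nabla f(\w{x}_k)^T \hat{\w{d}}_{h,k} = \nabla f(\w{x}_k)^T \w{Q}_{h,k}^{-1} \nabla f(\w{x}_k) = \bar{\lambda}(\w{x}_k)^2$.

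For \eqref{eq: svd full decr2} and \eqref{eq: svd full decr3} I would argue through Loewner-order inequalities. Comparing eigenvalues in the shared basis gives the sandwich $\tfrac{\sigma_n}{\sigma_{N+1}}\w{H}^{-1} \preceq \w{Q}_{h,k}^{-1} \preceq \w{H}^{-1}$: the upper bound because $1/\sigma_{N+1} \leq 1/\sigma_i$ for $i > N$, the lower bound because $\sigma_n/\sigma_{N+1} \leq 1$ together with $\sigma_n \leq \sigma_i$. Conjugating by $\nabla f(\w{x}_k)$ and taking square roots yields \eqref{eq: svd full decr2}. For \eqref{eq: svd full decr3}, observe that $\|\hat{\w{d}}_{h,k}\|_{\w{x}}^2 = \nabla f(\w{x}_k)^T \w{Q}_{h,k}^{-1} \w{H} \w{Q}_{h,k}^{-1} \nabla f(\w{x}_k)$, so it suffices to show $\w{Q}_{h,k}^{-1}\w{H}\w{Q}_{h,k}^{-1} \preceq \w{Q}_{h,k}^{-1}$; multiplying both sides by $\w{Q}_{h,k}^{1/2}$ reduces this to the requirement that $\w{Q}_{h,k}^{-1}\w{H}$ have all eigenvalues $\leq 1$, which holds since these are $1$ for $i \leq N$ and $\sigma_i/\sigma_{N+1} \leq 1$ for $i > N$.

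The substantive step is \eqref{ineq norm d's svd}. Here $\hat{\w{d}}_{h,k} - \w{d}_{h,k} = (\w{H}^{-1} - \w{Q}_{h,k}^{-1})\nabla f(\w{x}_k)$, and the error matrix $\w{D} := \w{H}^{-1} - \w{Q}_{h,k}^{-1}$ annihilates the top-$N$ eigenspace while having eigenvalues $1/\sigma_i - 1/\sigma_{N+1} \geq 0$ for $i > N$. Diagonalizing, $\|\w{H}^{1/2}\w{D}\nabla f(\w{x}_k)\|_2^2 = \sum_{i>N} \frac{(\sigma_{N+1}-\sigma_i)^2}{\sigma_i \sigma_{N+1}^2}\,\tilde{g}_i^2$, which I would compare against $\bigl(1 - \tfrac{\sigma_n}{\sigma_{N+1}}\bigr)^2 \lambda(\w{x}_k)^2 = \frac{(\sigma_{N+1}-\sigma_n)^2}{\sigma_{N+1}^2}\sum_{i=1}^n \tilde{g}_i^2/\sigma_i$ term by term. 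Since the denominators match, this reduces to the scalar inequality $(\sigma_{N+1}-\sigma_i)^2 \leq (\sigma_{N+1}-\sigma_n)^2$ for each $i > N$, which holds because $0 \leq \sigma_{N+1}-\sigma_i \leq \sigma_{N+1}-\sigma_n$ there; the indices $i \leq N$ contribute only extra nonnegative terms to the right-hand side, closing the bound.

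I expect the last item to be the main obstacle: because the error is \emph{quadratic} in the eigenvalue gaps, a crude operator-norm estimate is too lossy, and one must retain the weights $\tilde{g}_i^2/\sigma_i$ explicitly so that the worst-case gap $\sigma_{N+1}-\sigma_n$ factors out uniformly across the tail. The first three items are routine bookkeeping once the common eigenbasis is fixed.
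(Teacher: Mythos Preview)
Your proof is correct, and for items \eqref{eq: svd full decr1}--\eqref{eq: svd full decr3} it is essentially the paper's argument repackaged as Loewner inequalities in the shared eigenbasis. The one genuine difference is in \eqref{ineq norm d's svd}: you diagonalize and compare the sums $\sum_{i>N}\frac{(\sigma_{N+1}-\sigma_i)^2}{\sigma_i\sigma_{N+1}^2}\tilde g_i^2$ and $\frac{(\sigma_{N+1}-\sigma_n)^2}{\sigma_{N+1}^2}\sum_i \tilde g_i^2/\sigma_i$ term by term, and you warn that ``a crude operator-norm estimate is too lossy.'' The paper in fact uses an operator-norm bound and it is \emph{not} lossy, because it first factors
\[
\w{H}^{1/2}(\w{H}^{-1}-\w{Q}_{h,k}^{-1})\nabla f(\w{x}_k)=\bigl(\w{I}_n-\w{H}^{1/2}\w{Q}_{h,k}^{-1}\w{H}^{1/2}\bigr)\,\w{H}^{-1/2}\nabla f(\w{x}_k),
\]
so that the vector whose norm appears is exactly $\lambda(\w{x}_k)$, and the middle matrix is diagonal in the eigenbasis with spectral norm $1-\sigma_n/\sigma_{N+1}$. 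Your coordinate-wise comparison recovers the same bound but works harder for it; the moral is that the ``right'' insertion of $\w{H}^{1/2}\w{H}^{-1/2}$ makes the operator-norm route tight, since the same weighted vector $\w{H}^{-1/2}\nabla f(\w{x}_k)$ then sits on both sides of the inequality.
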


\begin{proof}
Equality \eqref{eq: svd full decr1} follows directly by the definition of $\hat{\w{d}}_{h,k}$. For the right-hand side of \eqref{eq: svd full decr2} we get
\begin{align*}
    \bar{\lambda}(\w{x}_k)^2 - \lambda(\w{x}_k)^2 & = \nabla f (\w{x}_k)^T(\w{Q}_{h,k}^{-1} - \nabla f^2 (\w{x}_k)^{-1}) \nabla f (\w{x}_k)^T \\
    & = \nabla f (\w{x}_k)^T [\nabla f^2 (\w{x}_k)]^{-\frac{1}{2}}([\nabla f^2 (\w{x}_k)]^{\frac{1}{2}}\w{Q}_{h,k}^{-1}[\nabla f^2 (\w{x}_k)]^{\frac{1}{2}} - \w{I}_n) [\nabla f^2 (\w{x}_k)]^{-\frac{1}{2}} \nabla f (\w{x}_k). 
\end{align*}
By construction of $\w{Q}_{h,k}^{-1}$, it holds that
\begin{align*}
    [\nabla f^2 (\w{x}_k)]^{\frac{1}{2}}\w{Q}_{h,k}^{-1}[\nabla f^2 (\w{x}_k)]^{\frac{1}{2}} - \w{I}_n & \preceq \sigma_{\max} ([\nabla f^2 (\w{x}_k)]^{\frac{1}{2}}\w{Q}_{h,k}^{-1}[\nabla f^2 (\w{x}_k)]^{\frac{1}{2}} - \w{I}_n) \w{I}_n \\
    & = \max_{i=1,\ldots.n} \left\lbrace \frac{\sigma_i(\nabla f^2 (\w{x}_k))}{\sigma_i(\w{Q}_{h,k})} -1 \right\rbrace = 0,
\end{align*}
and thus $\bar{\lambda}(\w{x}_k) - \lambda(\w{x}_k) \leq 0$. Using similar arguments we take
\begin{align*}
\lambda(\w{x}_k)^2 - \bar{\lambda}(\w{x}_k)^2 & \leq \max_{i=1,\ldots.n} \left\lbrace 1 - \frac{\sigma_i(\nabla f^2 (\w{x}_k))}{\sigma_i(\w{Q}_{h,k})} \right\rbrace \lambda(\w{x})^2\\
& = \left( 1 - \frac{\sigma_{k, n}}{\sigma_{k, N+1}} \right)  \lambda(\w{x})^2
\end{align*}
and thus the left-hand side inequality of \eqref{eq: svd full decr2} follows directly. As for the result in \eqref{eq: svd full decr3}, using the fact that $\w{Q}_{h,k}$ is symmetric positive-definite, we take
\begin{align*}
    \| \hat{\w{d}}_{h,k} \|_{\w{x}}^2 &= \hat{\w{d}}_{h,k}^T  \nabla f^2 (\w{x}_k) \hat{\w{d}}_{h,k} \\
    & = \nabla f (\w{x}_k)^T [\w{Q}_{h,k}]^{-\frac{1}{2}} \left([\w{Q}_{h,k}]^{-\frac{1}{2}} \nabla f^2 (\w{x}_k) [\w{Q}_{h,k}]^{-\frac{1}{2}}\right) [\w{Q}_{h,k}]^{-\frac{1}{2}} f (\w{x}_k)\\
    & \leq \sigma_{\max} \left([\w{Q}_{h,k}]^{-\frac{1}{2}} \nabla f^2 (\w{x}_k) [\w{Q}_{h,k}]^{-\frac{1}{2}}\right) \bar{\lambda}(\w{x}_k)^2,
\end{align*}
 and since $\sigma_{\max} \left([\w{Q}_{h,k}]^{-\frac{1}{2}} \nabla f^2 (\w{x}_k) [\w{Q}_{h,k}]^{-\frac{1}{2}}\right) = 1$ the result follows. Last, we prove \eqref{ineq norm d's svd}. Using the definitions of $\w{\hat{d}}_{h,k}$ and $\hat{d}_{h,k}$ we have
 \begin{align*}
     \left\| [\nabla^2 f ({\w{x}_{k}})]^{1/2} \left(
			   \w{\hat{d}}_{h,k} - \w{d}_{h,k} \right) \right\|_2 & = \left\|[\nabla^2 f (\w{x}_k)]^\frac{1}{2} ([\nabla^2 f (\w{x}_k)]^{-1}) - [\w{Q}_{h,k}]^{-1}) \nabla f (\w{x}_k) \right\|_2 \\ 
			   & \leq \left\|\w{I}_n - [\nabla^2 f (\w{x}_k)]^\frac{1}{2}[\w{Q}_{h,k}]^{-1}[\nabla^2 f (\w{x}_k)]^\frac{1}{2}\right\|_2 \bar{\lambda}(\w{x})\\
			   & = \max \left\lbrace 1- \frac{\sigma_{1}}{\sigma_{1}}, \ldots,1- \frac{\sigma_{N+1}}{\sigma_{N+1}}, \ldots, 1- \frac{\sigma_{n}}{\sigma_{N+1}} \right\rbrace \bar{\lambda}(\w{x})\\
			   & =  \left(1-\frac{\sigma_n}{\sigma_{N+1}}\right) \bar{\lambda}(\w{x})
 \end{align*}
 which concludes the proof of the lemma.
\end{proof}

\begin{lemma} \label{lemma411}
Let $\lambda(\w{x}_{k}) > \eta \ $ for some $\eta \in (0, \frac{3-\sqrt{5}}{2}) $. Then, there exists $\gamma > 0$ such that the coarse direction 
$\w{\hat{d}}_{h,k}$ will yield reduction in value function
\begin{equation*}
 f(\w{x}_{k} + t_{k} \w{\hat{d}}_{h,k}) - f(\w{x}_{k}) \leq -\gamma,
\end{equation*}
for any $k \in \mathbb{N}$.
\end{lemma}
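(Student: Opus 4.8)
The plan is to run the standard damped-phase analysis for self-concordant functions, but driven by the \emph{approximate} decrement $\bar{\lambda}(\w{x}_k)$ rather than the exact Newton decrement, and then to convert the resulting decrease into a uniform constant using the spectral sandwich bound \eqref{eq: svd full decr2}.

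First I would apply Lemma \ref{lemma f reduction} to the trial point $\w{y} = \w{x}_k + t\hat{\w{d}}_{h,k}$. By \eqref{eq: svd full decr3} we have $\|\w{y}-\w{x}_k\|_{\w{x}_k} = t\|\hat{\w{d}}_{h,k}\|_{\w{x}_k} \leq t\bar{\lambda}(\w{x}_k)$, so the hypothesis $\|\w{y}-\w{x}_k\|_{\w{x}_k}<1$ of the lemma is met whenever $t\bar{\lambda}(\w{x}_k)<1$. Combining the lemma with the inner-product identity \eqref{eq: svd full decr1} and the monotonicity of $\omega_*$ gives
\begin{equation*}
f(\w{x}_k + t\hat{\w{d}}_{h,k}) - f(\w{x}_k) \leq -t\,\bar{\lambda}(\w{x}_k)^2 + \omega_*\!\left(t\,\bar{\lambda}(\w{x}_k)\right).
\end{equation*}
Minimising the right-hand side over $t$ (the minimiser is the damped step $t = (1+\bar{\lambda}(\w{x}_k))^{-1}$, which automatically satisfies $t\bar{\lambda}(\w{x}_k)<1$) collapses the bound to $-\omega(\bar{\lambda}(\w{x}_k))$, exactly mirroring the classical damped Newton estimate.

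It then remains to (i) justify that the step $t_k$ actually returned by the line search produces a decrease no worse than a fixed fraction of $-\omega(\bar{\lambda}(\w{x}_k))$ --- for a backtracking/Armijo rule this follows because the damped step is admissible, so the accepted step is bounded below by $\beta/(1+\bar{\lambda}(\w{x}_k))$ and the sufficient-decrease inequality transfers the estimate --- and (ii) remove the dependence on the unknown $\bar{\lambda}(\w{x}_k)$. For (ii) I would use the left inequality of \eqref{eq: svd full decr2}, namely $\bar{\lambda}(\w{x}_k) \geq \sqrt{\sigma_{k,n}/\sigma_{k,N+1}}\,\lambda(\w{x}_k) \geq \sqrt{\varepsilon}\,\lambda(\w{x}_k)$, together with $\lambda(\w{x}_k)>\eta$ and the monotonicity of $\omega$ (resp.\ of $s\mapsto s^2/(1+s)$), to conclude that the decrease is at most $-\gamma$ for a fixed $\gamma>0$ depending only on $\varepsilon$ and $\eta$ (of the order of $\omega(\sqrt{\varepsilon}\,\eta)$, possibly scaled by the line-search constants $\alpha\beta$). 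Positivity of $\gamma$ is guaranteed because $\varepsilon>0$ (the Hessian is positive definite) and $\eta>0$.

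The main obstacle I anticipate is not the self-concordant bookkeeping, which is routine once \eqref{eq: svd full decr1}--\eqref{eq: svd full decr3} are in hand, but rather making step (i) rigorous for whichever line search the algorithm uses: one must show the accepted step is uniformly bounded away from zero so that the guaranteed per-iteration decrease does not degrade to $0$ across iterations. The interplay between the approximate decrement governing the search direction and the exact decrement appearing in the hypothesis $\lambda(\w{x}_k)>\eta$ is precisely what the sandwich \eqref{eq: svd full decr2} is designed to control, and keeping the constants explicit (so that $\gamma$ depends only on $\varepsilon$ and $\eta$, not on $k$) is the part that needs care.
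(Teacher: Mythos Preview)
Your proposal is correct and follows essentially the same route as the paper: apply Lemma~\ref{lemma f reduction} with \eqref{eq: svd full decr1} and \eqref{eq: svd full decr3} to obtain the one-parameter bound $-t\bar\lambda^2+\omega_*(t\bar\lambda)$, minimise at the damped step $t^*=(1+\bar\lambda)^{-1}$, verify that $t^*$ satisfies the Armijo condition so the accepted step obeys $t_k\geq \beta/(1+\bar\lambda)$, and finally invoke the left inequality of \eqref{eq: svd full decr2} together with $\lambda(\w{x}_k)>\eta$ to make the decrease uniform in $k$. Your constant $\gamma$ of order $\alpha\beta\,\omega(\sqrt{\varepsilon}\,\eta)$ (equivalently $\alpha\beta\,\varepsilon\eta^2/(1+\sqrt{\varepsilon}\,\eta)$) is actually more carefully tracked than the paper's stated $\gamma=\alpha\beta\,\eta^2/(1+\eta)$, which appears to drop the $\sqrt{\varepsilon}$ factor coming from the spectral sandwich.
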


\begin{proof}
By Lemma \ref{lemma f reduction} together with \eqref{gama scheme} and \eqref{eq: svd full decr1} we have that
\begin{align*}
    f(\w{x}_{k} + t_{k} \w{\hat{d}}_{h,k}) & \leq f(\w{x}_k) + \langle \nabla f(\w{x}_k),  \w{x}_{k+1} - \w{x}_k \rangle + \omega_*(\|\w{x}_{k+1} - \w{x}_k \|_{\w{x}_k}) \\
    & = f(\w{x}_k) + t_k \nabla f(\w{x}_k)^T \w{\hat{d}}_{h,k} + \omega_*(t_k \|\w{\hat{d}}_{h,k} \|_{\w{x}_k})\\
    & \leq f(\w{x}_k) - t_k \bar{\lambda}(\w{x})^2 +  \omega_*(t_k \bar{\lambda}(\w{x}))
\end{align*}
where the last inequality follows from \eqref{eq: svd full decr3} and the fact that $\omega_*(x)$ is a monotone increasing function. Note that the above inequality is valid for $t_{h,k} < \frac{1}{\hat{\lambda}(\w{x}_{h,k})}$. Next, the right-hand side is minimized at $t^*_h = \frac{1}{1 + \hat{\lambda}(\w{x}_{h,k})}$ and thus 
\begin{align*}
f(\w{x}_{k} + t^* \w{\hat{d}}_{h,k}) &\leq f(\w{x}_k) - \frac{\bar{\lambda}(\w{x})^2}{1 + \bar{\lambda}(\w{x}_{h,k})} +  \omega_*\left(\frac{\bar{\lambda}(\w{x})}{1 + \bar{\lambda}(\w{x}_{h,k})}\right)\\
&= f(\w{x}_k) - \bar{\lambda}(\w{x}_{k}) + \log \left( 1 + \bar{\lambda}(\w{x}_{k}) \right).
\end{align*}
Using the inequality $ -x + \log(1+x) \leq - \frac{x^2}{2(1+x)}$ for any $x>0$, we obtain
\begin{align*}
 f(\w{x}_{k} + t^* \w{\hat{d}}_{h,k}) &\leq f(\w{x}_k) - \frac{\bar{\lambda}(\w{x}_{k})^2}{2(1 + \bar{\lambda}(\w{x}_{k}))} \\
	   & \leq f(\w{x}_k) - \alpha t^*  \bar{\lambda}(\w{x}_{k})^2 \\
	   & = f(\w{x}_k) + \alpha t^* \nabla f(\w{x}_{k})^T \w{\hat{d}}_{h,k},
\end{align*}
thus $t^*$ satisfies the back-tracking line search exit condition and that it will always return a step size $t_{k} > \beta / (1 + \hat{\lambda}(\w{x}_{k})) $. Therefore,
\begin{equation*}
f_h(\w{x}_{k} + t_{k} \w{\hat{d}}_{h,k}) - f(\w{x}_{k}) \leq - \alpha \beta \frac{\bar{\lambda}(\w{x}_{k})^2}{1 + \bar{\lambda}(\w{x}_{k})}.
\end{equation*}
Additionally, combining the left-hand side in \eqref{eq: svd full decr2}, the fact that the function $x \rightarrow \frac{x^2}{1 + x} $ is monotone increasing for any $x>0$ and since by assumption $\lambda(\w{x}_{k}) > \eta$ we obtain
\begin{equation*}
 f(\w{x}_{k} + t_{k} \w{\hat{d}}_{h,k}) - f(\w{x}_{k}) \leq - \alpha \beta \frac{\eta^2}{1 + \eta}.
\end{equation*}
which concludes the proof by setting $\gamma := \alpha \beta \eta^2 / (1 + \eta)$.
\end{proof}

Next, we estimate the sub-optimality gap of the process.
\begin{lemma} 
Let $\lambda(\w{x}_{k}) < 1 $. Then,
\begin{equation*}
    \omega(\lambda(\w{x}_{k})) \leq f(\w{x}_k) - f(\w{x}^*) \leq \omega_*(\lambda(\w{x}_{k})).
\end{equation*}
If in addition $\lambda(\w{x}_{k}) < 0.68 $ then,
\begin{equation*}
     f(\w{x}_k) - f(\w{x}^*) \leq \lambda(\w{x}_{k})^2.
\end{equation*}
\end{lemma}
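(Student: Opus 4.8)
The plan is to obtain both inequalities as a direct consequence of Lemma \ref{lemma suboptimality nesterov}, specialized to the pair $\w{x} = \w{x}^*$ and $\w{y} = \w{x}_k$. The key observation is that $\w{x}^*$ is the minimizer, so $\nabla f(\w{x}^*) = \w{0}$, which simultaneously annihilates the linear term $\langle \nabla f(\w{x}^*), \w{x}_k - \w{x}^* \rangle$ and collapses the dual-norm argument to $\| \nabla f(\w{x}_k) - \nabla f(\w{x}^*) \|_{\w{x}_k}^* = \| \nabla f(\w{x}_k) \|_{\w{x}_k}^* = \lambda(\w{x}_k)$ by the definition \eqref{newton decrement} of the Newton decrement.

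First I would record the lower bound. The first inequality of Lemma \ref{lemma suboptimality nesterov} gives $f(\w{x}_k) \geq f(\w{x}^*) + \omega(\lambda(\w{x}_k))$ with no restriction on $\lambda(\w{x}_k)$, since $\operatorname{dom}\omega = [0,\infty)$; rearranging yields $\omega(\lambda(\w{x}_k)) \leq f(\w{x}_k) - f(\w{x}^*)$. For the upper bound I would invoke the second inequality of the same lemma, which is valid precisely when the dual-norm term lies below $1$; under the hypothesis $\lambda(\w{x}_k) < 1$ this condition holds and we obtain $f(\w{x}_k) \leq f(\w{x}^*) + \omega_*(\lambda(\w{x}_k))$. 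Together these produce the sandwich $\omega(\lambda(\w{x}_k)) \leq f(\w{x}_k) - f(\w{x}^*) \leq \omega_*(\lambda(\w{x}_k))$.

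For the sharper estimate it remains to establish the scalar inequality $\omega_*(x) \leq x^2$ on $[0,0.68]$, after which the claim follows by composing it with the upper bound just derived. I would set $\psi(x) := x^2 - \omega_*(x) = x^2 + x + \log(1-x)$ and differentiate, obtaining $\psi'(x) = \frac{x(1-2x)}{1-x}$. Hence $\psi$ is increasing on $(0,\tfrac12)$ and decreasing on $(\tfrac12,1)$, while $\psi(0)=0$; a direct evaluation shows $\psi(0.68) > 0$, so $\psi$ remains nonnegative throughout $[0,0.68]$ and therefore $\omega_*(x) \leq x^2$ there. Substituting $x = \lambda(\w{x}_k)$ completes the argument.

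The two sandwich inequalities are essentially immediate once the substitution $\nabla f(\w{x}^*) = \w{0}$ is made, so the only genuinely delicate point is the scalar comparison in the final step: the constant $0.68$ is chosen to sit just below the positive root of $\psi$, and one must verify, via the sign analysis of $\psi'$ above, that no crossing occurs on the interval. This verification is routine but is exactly where the specific threshold $0.68$ enters.
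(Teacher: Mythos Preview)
Your proof is correct and follows essentially the same approach as the paper: both invoke Lemma \ref{lemma suboptimality nesterov} with $\w{x} = \w{x}^*$, $\w{y} = \w{x}_k$ and then use the scalar inequality $\omega_*(x) \leq x^2$ on $[0,0.68]$. The paper merely asserts that scalar inequality, whereas you supply the short calculus verification via $\psi(x) = x^2 + x + \log(1-x)$; this extra detail is sound and your sign analysis of $\psi'$ is accurate.
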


\begin{proof}
The first result follows directly from Lemma \ref{lemma suboptimality nesterov} 
Further, it holds that $\omega_*(x) \leq x^2, 0 \leq x \leq 0.68$ and thus we take the second result.
\end{proof}
Therefore, $\lambda(\w{x}_{k})^2 \leq \epsilon, \epsilon \in (0, 0.68^2)$, can be used as an exit condition for the Low-Rank Newton method. Next, we prove that the line search selects the unit step.

\begin{lemma} \label{lemma unit step}
If $\bar{\lambda}(\w{x}_k) \leq (1-2\alpha)/2$ then the Low-Rank Newton method accepts the unit step, $t_k = 1$.
\end{lemma}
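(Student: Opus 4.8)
The plan is to show directly that the full step $t_k = 1$ satisfies the Armijo exit condition used in Algorithm \ref{algorithm}, namely
$$f(\w{x}_k + \hat{\w{d}}_{h,k}) \leq f(\w{x}_k) + \alpha \nabla f(\w{x}_k)^T \hat{\w{d}}_{h,k}.$$
By \eqref{eq: svd full decr1} the right-hand side equals $f(\w{x}_k) - \alpha \bar{\lambda}(\w{x}_k)^2$, so it suffices to bound $f(\w{x}_k + \hat{\w{d}}_{h,k})$ from above by this quantity.

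To produce such a bound I would invoke the self-concordant descent estimate of Lemma \ref{lemma f reduction} at $\w{y} = \w{x}_k + \hat{\w{d}}_{h,k}$. Before doing so one must check its hypothesis $\|\hat{\w{d}}_{h,k}\|_{\w{x}_k} < 1$; this follows from \eqref{eq: svd full decr3}, which gives $\|\hat{\w{d}}_{h,k}\|_{\w{x}_k} \leq \bar{\lambda}(\w{x}_k)$, together with the assumption $\bar{\lambda}(\w{x}_k) \leq (1-2\alpha)/2 < 1/2$ (recall $\alpha \in (0,1/2)$). Applying the lemma, substituting \eqref{eq: svd full decr1} for the inner product, and using the monotonicity of $\omega_*$ with \eqref{eq: svd full decr3} to replace $\|\hat{\w{d}}_{h,k}\|_{\w{x}_k}$ by $\bar{\lambda}(\w{x}_k)$, I obtain
$$f(\w{x}_k + \hat{\w{d}}_{h,k}) \leq f(\w{x}_k) - \bar{\lambda}(\w{x}_k)^2 + \omega_*(\bar{\lambda}(\w{x}_k)).$$
Comparing this with the target, the lemma reduces to the purely scalar inequality $\omega_*(\bar{\lambda}(\w{x}_k)) \leq (1-\alpha)\bar{\lambda}(\w{x}_k)^2$.

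To close, I would use the elementary bound $\omega_*(x) \leq x^2/(2(1-x))$ for $0 \leq x < 1$, which is immediate from comparing the power series $\omega_*(x) = \sum_{j \geq 2} x^j/j$ with $\sum_{j \geq 2} x^j/2$. Writing $\bar{\lambda} := \bar{\lambda}(\w{x}_k)$ and assuming $\bar{\lambda} > 0$ (the case $\bar{\lambda} = 0$ being trivial), the required inequality reduces to $1 \leq 2(1-\alpha)(1-\bar{\lambda})$. Inserting the hypothesis $\bar{\lambda} \leq (1-2\alpha)/2$, which yields $1 - \bar{\lambda} \geq (1+2\alpha)/2$, the right-hand side is at least $(1-\alpha)(1+2\alpha) = 1 + \alpha(1-2\alpha) \geq 1$, the last step using $\alpha \in (0,1/2)$, completing the chain. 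I do not expect a genuine obstacle: the only point requiring care is verifying the domain condition $\|\hat{\w{d}}_{h,k}\|_{\w{x}_k} < 1$ under which the estimate of Lemma \ref{lemma f reduction} is valid; everything else is a routine self-concordant computation, and the specific threshold $(1-2\alpha)/2$ is precisely what makes the final scalar inequality hold.
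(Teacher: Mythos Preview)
Your proof is correct and follows essentially the same route as the paper's: apply Lemma~\ref{lemma f reduction} with \eqref{eq: svd full decr1} and \eqref{eq: svd full decr3} to reduce everything to a scalar bound on $\omega_*(\bar\lambda)$, then verify the Armijo condition at $t_k=1$. The only cosmetic difference is the auxiliary inequality used to control $\omega_*$: the paper invokes $\omega_*(x)\le \tfrac12 x^2 + x^3$ on a bounded interval, whereas you use the cleaner $\omega_*(x)\le x^2/(2(1-x))$ on $[0,1)$; both yield the same threshold $(1-2\alpha)/2$, and your explicit check of the hypothesis $\|\hat{\w d}_{h,k}\|_{\w x_k}<1$ is a nice addition that the paper leaves implicit.
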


\begin{proof}
By Lemma \ref{lemma f reduction} we have
\begin{align*}
    f(\w{x}_{k} + \w{\hat{d}}_{h,k}) & \leq f(\w{x}_k) - \bar{\lambda}(\w{x})^2 +  \omega_*(\bar{\lambda}(\w{x})) \\
    & = f(\w{x}_k) - \bar{\lambda}(\w{x})^2 -  \bar{\lambda}(\w{x}) - \log(1 - \bar{\lambda}(\w{x}))
\end{align*}
which holds since by assumption $\bar{\lambda}(\w{x})<1$. Further, $-x - \log(1-x)$ for all $x\in(0,0.81)$ and hence
\begin{align*}
    f(\w{x}_{k} + \w{\hat{d}}_{h,k}) & \leq f(\w{x}_k) - \bar{\lambda}(\w{x})^2 + \frac{1}{2} \bar{\lambda}(\w{x})^2 + \bar{\lambda}(\w{x})^3 \\
    & = f(\w{x}_{k})- \frac{1}{2}(1-2\bar{\lambda}(\w{x}))\bar{\lambda}(\w{x})^2.
\end{align*}
Therefore, if $\bar{\lambda}(\w{x}_k) \leq (1-2\alpha)/2$ we take
\begin{equation*}
    f(\w{x}_{k} + \w{\hat{d}}_{h,k})  \leq f(\w{x}_k) - \alpha\bar{\lambda}(\w{x})^2
\end{equation*}
which satisfies the line search condition for $t_k=1$.
\end{proof}

\begin{lemma} \label{proposition f svd}
Let $f : \mathbb{R}^n \rightarrow \mathbb{R}$ be a strictly convex self-concordant function. If $ \bar{\lambda}(\w{x}_{k})< \frac{1}{t_{k}}$,  we have that
\begin{enumerate}[label=(\roman*)]
 \item $\nabla^2 f(\w{x}_{k+1}) \preceq \frac{1}{(1 - t_{k}\bar{\lambda}(\w{x}_{k}))^2} \nabla^2 f(\w{x}_{k}),$ \label{proposition f svd case i}
 \item $[\nabla^2 f(\w{x}_{k+1})]^{-1} \preceq  \frac{1}{( 1 - t_{k}\bar{\lambda}(\w{x}_{k}))^2} 
 [\nabla^2 f(\w{x}_{k})]^{-1}$. \label{proposition f svd case ii}
\end{enumerate}
\end{lemma}
\begin{proof}
The proof is analogue to Lemma \ref{proposition f} below but with $\bar{\lambda}(\w{x}_{k})$ instead of $\hat{\lambda}(\w{x}_{k})$.
\end{proof}
The next lemma shows the local super-linear rate of the Low-Rank Newton method.

\begin{lemma} \label{lemma core svd}
Suppose that the sequence is generated by \eqref{gama scheme} with $t_{k} = 1$. Then,
 \begin{equation*}
      \lambda(\w{x}_{k+1}) \leq  \frac{ 1-\frac{\sigma_{k, n}}{\sigma_{k, N+1}} + \lambda(\w{x}_{k})}{\left( 1 - \lambda(\w{x}_{k}) \right)^2} \lambda(\w{x}_{k}).
 \end{equation*}
\end{lemma}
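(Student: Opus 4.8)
The plan is to control $\lambda(\w{x}_{k+1}) = \|\nabla f(\w{x}_{k+1})\|^*_{\w{x}_{k+1}}$ in two stages: first transfer the dual local norm from $\w{x}_{k+1}$ back to $\w{x}_k$, and then estimate $\|\nabla f(\w{x}_{k+1})\|^*_{\w{x}_k}$ through an integral representation of the gradient. For the first stage I would invoke Lemma \ref{proposition f svd}\ref{proposition f svd case ii} with the unit step $t_k=1$, which gives $[\nabla^2 f(\w{x}_{k+1})]^{-1} \preceq (1-\bar{\lambda}(\w{x}_k))^{-2}[\nabla^2 f(\w{x}_k)]^{-1}$. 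Evaluating the quadratic form $\w{v}^T[\nabla^2 f(\w{x}_{k+1})]^{-1}\w{v}$ and taking square roots yields, for every $\w{v}$, the contraction $\|\w{v}\|^*_{\w{x}_{k+1}} \leq (1-\bar{\lambda}(\w{x}_k))^{-1}\|\w{v}\|^*_{\w{x}_k}$, and hence $\lambda(\w{x}_{k+1}) \leq (1-\bar{\lambda}(\w{x}_k))^{-1}\|\nabla f(\w{x}_{k+1})\|^*_{\w{x}_k}$. Since $\bar{\lambda}(\w{x}_k)\leq\lambda(\w{x}_k)$ by the right inequality of \eqref{eq: svd full decr2}, the prefactor is at most $(1-\lambda(\w{x}_k))^{-1}$, which already produces one of the two factors in the denominator of the claimed bound.

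The core of the argument is estimating $\|\nabla f(\w{x}_{k+1})\|^*_{\w{x}_k}$. Writing $\w{x}_{k+1}-\w{x}_k = \hat{\w{d}}_{h,k}$ and using the fundamental theorem of calculus together with the exact Newton identity $\nabla f(\w{x}_k) = -\nabla^2 f(\w{x}_k)\w{d}_{h,k}$, I would decompose
\begin{equation*}
\nabla f(\w{x}_{k+1}) = \nabla^2 f(\w{x}_k)(\hat{\w{d}}_{h,k}-\w{d}_{h,k}) + \int_0^1 \left[\nabla^2 f(\w{x}_k+\tau\hat{\w{d}}_{h,k}) - \nabla^2 f(\w{x}_k)\right]\hat{\w{d}}_{h,k}\, d\tau.
\end{equation*}
The dual norm of the first term equals $\|[\nabla^2 f(\w{x}_k)]^{1/2}(\hat{\w{d}}_{h,k}-\w{d}_{h,k})\|_2$, which is bounded by $(1-\tfrac{\sigma_{k,n}}{\sigma_{k,N+1}})\lambda(\w{x}_k)$ thanks to \eqref{ineq norm d's svd}. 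This is precisely the inexactness term that distinguishes the analysis from the classical Newton case.

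For the integral term I would set $r := \|\hat{\w{d}}_{h,k}\|_{\w{x}_k}$, which satisfies $r\leq\bar{\lambda}(\w{x}_k)\leq\lambda(\w{x}_k)<1$ by \eqref{eq: svd full decr3} and the local regime. At each $\tau$ the point $\w{x}_k+\tau\hat{\w{d}}_{h,k}$ lies within distance $\tau r<1$ of $\w{x}_k$ in the local norm, so \eqref{relation sandwitz} bounds the operator norm of $[\nabla^2 f(\w{x}_k)]^{-1/2}(\nabla^2 f(\w{x}_k+\tau\hat{\w{d}}_{h,k})-\nabla^2 f(\w{x}_k))[\nabla^2 f(\w{x}_k)]^{-1/2}$ by $(1-\tau r)^{-2}-1$. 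Factoring $[\nabla^2 f(\w{x}_k)]^{1/2}\hat{\w{d}}_{h,k}$ out of the integrand, integrating $\int_0^1\!\big((1-\tau r)^{-2}-1\big)d\tau = \tfrac{r}{1-r}$, and multiplying by $\|\hat{\w{d}}_{h,k}\|_{\w{x}_k}=r$ gives the bound $\tfrac{r^2}{1-r}\leq \tfrac{\lambda(\w{x}_k)^2}{1-\lambda(\w{x}_k)}$ by monotonicity. Combining the two terms and the prefactor from the first stage, I would collect everything over $(1-\lambda(\w{x}_k))^2$ and discard the nonnegative cross term $\tfrac{\sigma_{k,n}}{\sigma_{k,N+1}}\lambda(\w{x}_k)^2$ in the numerator to land exactly on the stated inequality. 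I expect the integral estimate of the Hessian-difference term to be the main technical obstacle, both in justifying the interchange of norm and integral and in pinning down the self-concordance constants so that the final algebra collapses precisely to the claimed form; the bookkeeping that lets one replace $\bar{\lambda}(\w{x}_k)$ and $r$ by $\lambda(\w{x}_k)$ while preserving the exact numerator is the delicate final step.
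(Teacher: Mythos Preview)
Your proof is correct and follows the same two-stage architecture as the paper: transfer the dual norm from $\w{x}_{k+1}$ to $\w{x}_k$ via Lemma~\ref{proposition f svd}, then control $\|[\nabla^2 f(\w{x}_k)]^{-1/2}\nabla f(\w{x}_{k+1})\|_2$ through the integral representation, splitting into an inexactness term and a Hessian-variation term. The only difference is where the add--subtract is performed. The paper writes
\[
\w{Z} \;=\; \w{T}\,[\nabla^2 f(\w{x}_k)]^{1/2}(\hat{\w{d}}_{h,k}-\w{d}_{h,k}) \;+\; (\w{T}-\w{I})[\nabla^2 f(\w{x}_k)]^{1/2}\w{d}_{h,k},
\]
with $\w{T}=\int_0^1 [\nabla^2 f(\w{x}_k)]^{-1/2}\nabla^2 f(\w{x}_k+y\hat{\w{d}}_{h,k})[\nabla^2 f(\w{x}_k)]^{-1/2}\,dy$, and lands \emph{exactly} on the stated bound. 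Your split is instead $[\nabla^2 f(\w{x}_k)]^{1/2}(\hat{\w{d}}_{h,k}-\w{d}_{h,k}) + (\w{T}-\w{I})[\nabla^2 f(\w{x}_k)]^{1/2}\hat{\w{d}}_{h,k}$, which saves one factor $(1-\lambda(\w{x}_k))^{-1}$ on the inexactness term and yields the slightly sharper numerator $1-\tfrac{\sigma_{k,n}}{\sigma_{k,N+1}}+\tfrac{\sigma_{k,n}}{\sigma_{k,N+1}}\lambda(\w{x}_k)$; you then relax to the stated form. One harmless slip: the nonnegative term you discard in that final relaxation is $(1-\tfrac{\sigma_{k,n}}{\sigma_{k,N+1}})\lambda(\w{x}_k)^2$, not $\tfrac{\sigma_{k,n}}{\sigma_{k,N+1}}\lambda(\w{x}_k)^2$.
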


\begin{proof}

By Lemma \ref{proposition f svd}, the upper bound in \eqref{eq: svd full decr2} and $t_k=1$ we have
\begin{equation} \label{ineq: key lamda svd}
\nabla^2 f(\w{x}_{k+1}) \preceq \frac{1}{(1 - \lambda(\w{x}_{k}))^2} \nabla^2 f(\w{x}_{k}).
\end{equation}
Now, by the definition of the Newton decrement we have that
\begin{equation*}
 \lambda(\w{x}_{k+1}) = \left[ \nabla f(\w{x}_{k+1})^T [ \nabla^2 f(\w{x}_{k+1})]^{-1} \nabla f(\w{x}_{k+1})  \right]^{1/2},
\end{equation*}
and combining this with inequality \eqref{ineq: key lamda svd} we take
\begin{equation} \label{ineqlambda svd}
 \lambda(\w{x}_{k+1}) \leq \frac{1}{1- \lambda(\w{x}_{k})} \left\| [\nabla^2 f ({\w{x}_{k}})]^{-1/2} \nabla f(\w{x}_{k+1}) \right\|_2.
\end{equation}
Denote $\w{Z} := [\nabla^2 f ({\w{x}_{k}})]^{-1/2} \nabla f(\w{x}_{k+1})$. Using the fact that 
\begin{equation*}
\nabla f(\w{x}_{k+1}) = \int_0^1 \nabla^2 f({\w{x}_{k}} + y \w{\hat{d}}_{h,k}) \w{\hat{d}}_{h,k} \ dy + \nabla f(\w{x}_{k})
\end{equation*}
we see that
\begin{align*}
 \w{Z} & = [\nabla^2 f ({\w{x}_{k}})]^{-1/2} \left( \int_0^1 \nabla^2 f ({\w{x}_{k}} + y \w{\hat{d}}_{h,k}) \w{\hat{d}}_{h,k} \ dy 
	       + \nabla f(\w{x}_{k})  \right) \\
       & =  \underbrace{\int_0^1 [\nabla^2 f ({\w{x}_{k}})]^{-1/2} \nabla^2 f ({\w{x}_{k}} +y\w{\hat{d}}_{h,k})
       [\nabla^2 f ({\w{x}_{k}})]^{-1/2} \ dy}_\w{T} \ [\nabla^2 f ({\w{x}_{k}})]^{1/2} \w{\hat{d}}_{h,k} -    [\nabla^2 f ({\w{x}_{k}})]^{1/2} \w{d}_{h,k}, \\
\end{align*}
where $\w{d}_{h,k}$ is the Newton direction. Next adding and subtracting the quantity $\w{T}[\nabla^2 f ({\w{x}_{k}})]^{1/2} \w{d}_{h,k}$ we have that
\begin{equation*}
    \w{Z} = \w{T}[\nabla^2 f ({\w{x}_{k}})]^{1/2} (\w{\hat{d}}_{h,k} - \w{d}_{h,k}) + (\w{T} - \w{I}_{N \times N})[\nabla^2 f ({\w{x}_{k}})]^{1/2} \w{d}_{h,k},
\end{equation*}
and thus
\begin{equation} \label{ineq Zs svd}
    \left\| \w{Z} \right\| \leq \left \|\underbrace{\w{T}[\nabla^2 f ({\w{x}_{k}})]^{1/2} (\w{\hat{d}}_{h,k} - \w{d}_{h,k})}_{\w{Z}_1} \right\|_2 + \left\| \underbrace{(\w{T} - \w{I}_{N \times N})[\nabla^2 f ({\w{x}_{k}})]^{1/2} \w{d}_{h,k}}_{\w{Z}_2} \right\|_2 \\
\end{equation}
Using again \eqref{ineq: key lamda svd} and since, by assumption, 
$y \hat{\lambda}(\w{x}_{k})< 1$ we take
\begin{equation*}
    [\nabla^2 f ({\w{x}_{k}})]^{-1/2} \nabla^2 f ({\w{x}_{k}} +y\w{\hat{d}}_{h,k})
       [\nabla^2 f ({\w{x}_{k}})]^{-1/2} \preceq  \frac{1}{\left(1 -y \lambda(\w{x}_{k}) \right)^2} \w{I}_{N \times N}.
\end{equation*}
We are now in position to estimate both norms in \eqref{ineq Zs svd}. For the first one we have that
\begin{align*}
    \left\| \w{Z}_1 \right\| & \leq \left\| \int_0^1 \frac{1}{\left(1 -y \lambda(\w{x}_{k}) \right)^2} dy  \right\|_2  \left\| \ [\nabla^2 f ({\w{x}_{k}})]^{1/2} \left(
			   \w{\hat{d}}_{h,k} - \w{d}_{h,k} \right) \right\|_2 \\
			   & = \frac{1}{1 - \lambda(\w{x}_{k}) } \left\| [\nabla^2 f ({\w{x}_{k}})]^{1/2} \left(
			   \w{\hat{d}}_{h,k} - \w{d}_{h,k} \right) \right\|_2\\
	& \leq  \left(1-\frac{\sigma_{k, n}}{\sigma_{k, N+1}}\right)\frac{\lambda(\w{x}_k)}{1 - \lambda(\w{x}_{h,k}) }.
\end{align*}
where the last inequality follows from \eqref{ineq norm d's svd}.
Next, the second norm implies 
\begin{align*}
\left\| \w{Z}_2 \right\| & \leq    \left\| \int_0^1 \left( \frac{1}{\left(1 -y \lambda(\w{x}_{k}) \right)^2} - 1 \right) dy \right\|_2 
			   \left\| [\nabla^2 f ({\w{x}_{k}})]^{1/2} \w{d}_{h,k} \right\|_2 \\
			   			    & = \frac{\lambda(\w{x}_{k})}{1 - \lambda(\w{x}_{k}) } \lambda(\w{x}_{k}).
\end{align*}
Putting this all together, inequality \eqref{ineqlambda svd} becomes
\begin{equation*}
      \lambda(\w{x}_{k+1}) \leq  \frac{ 1-\frac{\sigma_{k, n}}{\sigma_{k, N+1}} + \lambda(\w{x}_{k})}{\left( 1 - \lambda(\w{x}_{k}) \right)^2} \lambda(\w{x}_{k}).
 \end{equation*}
as required.
\end{proof}

Next we prove \cref{thm svd phases}. Recall the

\begin{proof}[Proof of \cref{thm svd phases}]

By \cref{lemma411},  we see that one step of the first phase of the Low-Rank Newton method decreases the objective by $\gamma := \alpha \beta \eta^2 / (1 + \eta)>0$. By Lemma \ref{lemma411} we also note that the line-search can be applied to determine the step size parameter. This proves the result of phase \ref{low rank newton phase i} of the Low-Rank Newton method.

Next, in phase \ref{low rank newton phase ii} we have $\lambda(\w{x}_{k}) \leq \eta$. Further, by \cref{lemma unit step}, and since $\lambda(\w{x}_{k}) \leq \eta$, the line search accepts the unit step. Moreover, by strict convexity, $\varepsilon_k > 0$ for all $k \in \N$. If $\varepsilon \in (0, 1)$, then $\varepsilon_{\min} \in (0, 1]$
Hence, by Lemma \ref{lemma core svd}, since $\frac{\sigma_{k, n}}{\sigma_{k, N+1}} \geq \varepsilon_{\min}$ and $\lambda(\w{x}_{k}) \leq \eta$, we take
\begin{equation*}
          \lambda(\w{x}_{k+1}) \leq  \frac{ 1-\varepsilon_{\min} + \eta}{\left( 1 - \eta \right)^2} \lambda(\w{x}_{k}).
\end{equation*}
Moreover, by definition $\eta := (3 - \sqrt{9 - 4\varepsilon})$ and since $0 < \varepsilon_{\min} \leq 1$ we obtain $\frac{ 1 -\varepsilon_{\min} + \hat{\eta}}{\left( 1 - \hat{\eta} \right)^2}  <  1$. This shows that $\lambda(\w{x}_{k+1}) < \lambda(\w{x}_{k})$ and, in particular, this process converges with composite rate for $\lambda(\w{x}_{k}) \leq \eta$ and some $\eta \in (0, \frac{3 - \sqrt{5}}{2})$.

Let $\varepsilon = 1$. Then, $\lim_{k \rightarrow \infty} \varepsilon_k = 1$ since $(\varepsilon_k)_{k \in \N}$ is bounded and $\varepsilon \leq \limsup_{k \rightarrow \infty} \varepsilon_k \leq 1$. Using again the result of \cref{lemma core svd} together with $\lambda(\w{x}_{k}) \leq \eta$ we have that
$\frac{ 1 -\varepsilon_k + \hat{\eta}}{\left( 1 - \hat{\eta} \right)^2}  <  1$, for all $k \in \N$. Therefore, $\lambda(\w{x}_{k+1}) < \lambda(\w{x}_{k})$. 
This means 
\begin{equation*}
    \lim_{k \rightarrow \infty} \frac{ 1 -\varepsilon_k + \lambda(\w{x}_{k})}{\left( 1 - \lambda(\w{x}_{k}) \right)^2} = 0,
\end{equation*}
which proves the super-linear rate for the low-rank Newton method as required. 
\end{proof}

\section{Proof of Theorem \ref{thm expects}} \label{sec proof of thm2}
Let $f$ be a strictly convex self-concordant function. Theorem \ref{thm expects} considers the sequence generated by
\begin{equation*}
    \w{x}_{k+1} = \w{x}_k - t_k \w{P}_k\w{Q}_{H,k}^{-1} \w{R}_k \nabla f (\w{x}_k).
\end{equation*}
For the purposes of this section we denote  $\hat{\w{d}}_{h,k} := - \w{P}_k\w{Q}_{H,k}^{-1} \w{R}_k \nabla f (\w{x}_k) $,  where $\w{Q}_{H,k}^{-1}$ is defined in \eqref{svd small Q_H} and $\w{P}_k$ in \cref{definition P}. Below we will make use of the approximate decrements $\hat{\lambda}(\w{x})$ and $\tilde{\lambda}(\w{x})$ defined in \eqref{gama decrement}, respectively. Before proving Lemma \ref{lemma for thm} we state an upper bound for $\hat{\lambda}(\w{x})$ which was proved in \cite{tsipinakis2024multilevel}.

\begin{lemma}[\citep{tsipinakis2024multilevel}] \label{lemma upper bound lamda_hat}
Let $\lambda(\w{x}_{h,k})$ be the newton decrement in \eqref{newton decrement}. For any $k \in \mathbb{N}$ it holds that
\begin{equation} \label{ineq bounds lambda hat}
   \tilde{\lambda}(\w{x}_{h,k}) \leq \lambda(\w{x}_{h,k}).
\end{equation}
\end{lemma}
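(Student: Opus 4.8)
The plan is to reduce the claimed inequality $\tilde{\lambda}(\w{x}_{k}) \leq \lambda(\w{x}_{k})$ to an elementary fact about orthogonal projectors. Since both decrements are nonnegative by definition \eqref{gama decrement}, it suffices to compare their squares. Writing $\w{H} := \nabla^2 f(\w{x}_{k})$ and $\w{g} := \nabla f(\w{x}_{k})$, the second-order coherency condition \eqref{eq:secondOrderCoherent} together with $\w{P} = \w{R}^T$ from Assumption \ref{assumption P} gives $\nabla^2 F(\w{y}_0) = \w{R}\w{H}\w{R}^T$, so that
\[
\tilde{\lambda}(\w{x}_{k})^2 = (\w{R}\w{g})^T(\w{R}\w{H}\w{R}^T)^{-1}(\w{R}\w{g}), \qquad \lambda(\w{x}_{k})^2 = \w{g}^T\w{H}^{-1}\w{g}.
\]

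First I would introduce the symmetric positive-definite square root $\w{H}^{1/2}$ (which exists since $f$ is strictly convex) and set $\w{B} := \w{R}\w{H}^{1/2}$ and $\w{v} := \w{H}^{-1/2}\w{g}$. Then $\lambda(\w{x}_{k})^2 = \|\w{v}\|_2^2$, while using $\w{R}\w{g} = \w{B}\w{v}$ and $\w{R}\w{H}\w{R}^T = \w{B}\w{B}^T$ one obtains
\[
\tilde{\lambda}(\w{x}_{k})^2 = \w{v}^T \w{B}^T(\w{B}\w{B}^T)^{-1}\w{B}\,\w{v}.
\]

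The key step is to recognise $\w{\Pi} := \w{B}^T(\w{B}\w{B}^T)^{-1}\w{B}$ as the orthogonal projector onto the row space of $\w{B}$. For this to be well defined one needs $\w{B}\w{B}^T$ invertible, which is exactly where Assumption \ref{assumption P} enters: since $\w{P} = \w{R}^T$ has full column rank $N$, the matrix $\w{R}$ has full row rank $N$, and multiplying by the nonsingular $\w{H}^{1/2}$ preserves this, so $\w{B}$ has full row rank. Any orthogonal projector satisfies $\w{0} \preceq \w{\Pi} \preceq \w{I}_n$, hence $\tilde{\lambda}(\w{x}_{k})^2 = \w{v}^T\w{\Pi}\w{v} \leq \w{v}^T\w{v} = \lambda(\w{x}_{k})^2$; taking square roots yields the claim.

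I expect the only genuine subtlety to be verifying the invertibility of $\w{B}\w{B}^T$ so that the projector interpretation is legitimate; this is the single point at which the structural Assumption \ref{assumption P} on $\w{R}$ and $\w{P}$ is truly needed, the remainder being routine linear algebra. (As this result is quoted from \cite{tsipinakis2021multilevel}, an alternative is simply to cite it, but the projector argument above is self-contained.)
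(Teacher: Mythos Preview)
Your proposal is correct and is essentially the same argument as the paper's: both identify $\w{B}^T(\w{B}\w{B}^T)^{-1}\w{B}$ (with $\w{B}=\w{R}\,\w{H}^{1/2}$, equivalently the projection onto $\operatorname{span}\{\w{H}^{1/2}\w{P}\}$) as an orthogonal projector and use $\w{\Pi}\preceq\w{I}_n$ (equivalently $\|\w{\Pi}\|_2=1$) to bound $\tilde{\lambda}^2$ by $\lambda^2$. Your remark on the invertibility of $\w{B}\w{B}^T$ via Assumption~\ref{assumption P} is the right justification and matches the paper's implicit use of the full-rank condition.
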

Note that the above result holds with probability one. Below we prove Lemma \ref{lemma for thm}. We state again Lemma \ref{lemma for thm} for convenience.



\begin{lemma} \label{lemma for thm appendix}
For all $k \in \mathbb{N}$ we have
\begin{align}
    & \w{d}_{h,k}^T \nabla^{2} f(\w{x}_k) \hat{\w{d}}_{h,k} = \hat{\lambda}^2(\w{x}_{k}) \label{eq: lemma1} \\ 
    & \w{\hat{d}}_{h,k}^T \nabla^{2} f(\w{x}_k) \hat{\w{d}}_{h,k} \leq \hat{\lambda}^2(\w{x}_{k}) \label{eq: lemma2}\\
    & \| [\nabla^{2} f(\w{x}_k)]^\frac{1}{2} (\w{d}_{h,k} - \hat{\w{d}}_{h,k}) \| \leq \hat{e}_k \label{eq: lemma3}\\
    & \sqrt{\frac{\sigma_{k, N}}{\sigma_{k, p+1}}} \tilde{\lambda}(\w{x}_{k}) \leq \hat{\lambda}(\w{x}_{k}) \leq \tilde{\lambda}(\w{x}_{k})
    \leq \lambda(\w{x}_{k}), \label{eq: lemma4}
\end{align}
where $\hat{e}_k := \sqrt{\lambda(\w{x}_{k})^2 - \hat{\lambda}(\w{x}_{k})^2}$.

Further, let Assumption \ref{ass: prob delta} hold. Then, there exists $\mu_k \in (0,1]$ such that $\hat{e}_k \leq (1 - \frac{\sigma_{k,N}}{\sigma_{k, p+1}} \mu_k^2)^{1/2} \lambda(\w{x}_{k})$ with probability $\delta$. 
\end{lemma}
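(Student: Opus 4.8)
The plan is to reduce everything to the $N$-dimensional coarse space, where the structure of the T-SVD approximation is transparent. Write $\w{G} := \nabla^2 f(\w{x}_k)$, let $\w{H} := \w{R}\w{G}\w{P} = \nabla^2 F(\w{y_0})$ be the reduced Hessian, and set $\w{v}_H := \w{R}\nabla f(\w{x}_k)$. Since $\w{P} = \w{R}^T$ under Assumption \ref{assumption P}, we have $\w{P}^T\w{G}\w{P} = \w{H}$, and because $\hat{\w{Q}}_{h,k}^{-1} = \w{P}\w{Q}_{H,k}^{-1}\w{R}$ one checks directly that $\hat{\lambda}^2(\w{x}_k) = \w{v}_H^T\w{Q}_{H,k}^{-1}\w{v}_H$ and $\tilde{\lambda}^2(\w{x}_k) = \w{v}_H^T\w{H}^{-1}\w{v}_H$. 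The whole lemma then rests on one structural fact: in the eigenbasis of $\w{H}$, the matrix $\w{Q}_{H,k}^{-1}$ from \eqref{svd small Q_H} has eigenvalues $1/\sigma_1,\ldots,1/\sigma_p,1/\sigma_{p+1},\ldots,1/\sigma_{p+1}$, whereas $\w{H}^{-1}$ has $1/\sigma_1,\ldots,1/\sigma_N$. Comparing these entry by entry yields the two operator inequalities
\begin{equation*}
\tfrac{\sigma_{k,N}}{\sigma_{k,p+1}}\,\w{H}^{-1} \preceq \w{Q}_{H,k}^{-1} \preceq \w{H}^{-1}, \qquad \w{Q}_{H,k}^{-1/2}\w{H}\w{Q}_{H,k}^{-1/2} \preceq \w{I}_N,
\end{equation*}
which are the only nontrivial ingredients.

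Given these, I would dispatch the four identities quickly. Statement (1) is pure substitution: since $\w{d}_{h,k}^T\w{G} = -\nabla f(\w{x}_k)^T$, the left side equals $-\nabla f(\w{x}_k)^T\hat{\w{d}}_{h,k} = \nabla f(\w{x}_k)^T\hat{\w{Q}}_{h,k}^{-1}\nabla f(\w{x}_k) = \hat{\lambda}^2(\w{x}_k)$. For (2), I pass to the coarse space and write $\hat{\w{d}}_{h,k}^T\w{G}\hat{\w{d}}_{h,k} = \w{v}_H^T\w{Q}_{H,k}^{-1}\w{H}\w{Q}_{H,k}^{-1}\w{v}_H$; inserting $\w{Q}_{H,k}^{-1/2}\w{H}\w{Q}_{H,k}^{-1/2}\preceq\w{I}_N$ bounds this by $\w{v}_H^T\w{Q}_{H,k}^{-1}\w{v}_H = \hat{\lambda}^2(\w{x}_k)$. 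Statement (3) follows by expanding $\|\w{G}^{1/2}(\w{d}_{h,k}-\hat{\w{d}}_{h,k})\|^2 = \w{d}_{h,k}^T\w{G}\w{d}_{h,k} - 2\w{d}_{h,k}^T\w{G}\hat{\w{d}}_{h,k} + \hat{\w{d}}_{h,k}^T\w{G}\hat{\w{d}}_{h,k}$ and feeding in $\w{d}_{h,k}^T\w{G}\w{d}_{h,k} = \lambda^2(\w{x}_k)$ together with (1) and (2), which collapses the right side to $\lambda^2(\w{x}_k) - \hat{\lambda}^2(\w{x}_k) = \hat{e}_k^2$. For the chain (4), the rightmost inequality $\tilde{\lambda}\leq\lambda$ is Lemma \ref{lemma upper bound lamda_hat}, while the two middle inequalities are exactly the two operator bounds above applied to the quadratic form in $\w{v}_H$.

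Finally, for the concluding bound I would set $\mu_k := \tilde{\lambda}(\w{x}_k)/\lambda(\w{x}_k)$; by the chain (4) this lies in $(0,1]$ whenever $\hat{\lambda}(\w{x}_k)>0$ (which forces $\tilde{\lambda}(\w{x}_k)>0$), and it is consistent with the constant $\mu_k$ of \cite{tsipinakis2021multilevel} appearing in $\tilde{e}_k\leq(1-\mu_k^2)^{1/2}\lambda(\w{x}_k)$. The left inequality of (4) then gives $\hat{\lambda}^2(\w{x}_k)\geq\frac{\sigma_{k,N}}{\sigma_{k,p+1}}\tilde{\lambda}^2(\w{x}_k)=\frac{\sigma_{k,N}}{\sigma_{k,p+1}}\mu_k^2\lambda^2(\w{x}_k)$, so that $\hat{e}_k^2 = \lambda^2(\w{x}_k)-\hat{\lambda}^2(\w{x}_k)\leq(1-\frac{\sigma_{k,N}}{\sigma_{k,p+1}}\mu_k^2)\lambda^2(\w{x}_k)$, and taking square roots finishes the argument (the bracket is nonnegative since $\sigma_{k,N}\leq\sigma_{k,p+1}$ and $\mu_k\leq1$).

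I expect the main obstacle to be the careful bookkeeping of the reduction to the coarse space, in particular verifying that $\hat{\w{Q}}_{h,k}^{-1}=\w{P}\w{Q}_{H,k}^{-1}\w{R}$ is rank-deficient on $\R^n$, so that the quadratic forms must be evaluated through $\w{v}_H$ and $\w{H}$ rather than through a spurious inverse of $\w{G}$ on all of $\R^n$, and in pinning down the precise eigenvalue ratios that produce the constant $\sigma_{k,N}/\sigma_{k,p+1}$, which propagates through all four statements and into the final inequality.
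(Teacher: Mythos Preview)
Your proposal is correct and follows essentially the same route as the paper's proof: both reduce to the $N$-dimensional coarse space, exploit the shared eigenbasis of $\w{H}$ and $\w{Q}_{H,k}$ to obtain the operator comparisons $\w{Q}_{H,k}^{-1/2}\w{H}\w{Q}_{H,k}^{-1/2}\preceq\w{I}_N$ and $\tfrac{\sigma_{k,N}}{\sigma_{k,p+1}}\w{H}^{-1}\preceq\w{Q}_{H,k}^{-1}\preceq\w{H}^{-1}$, and then read off the four claims and the final bound in the same order. The only cosmetic difference is that you isolate these operator inequalities upfront, whereas the paper rederives the eigenvalue ratios inline for each part; your choice $\mu_k=\tilde{\lambda}(\w{x}_k)/\lambda(\w{x}_k)$ is the natural endpoint of the interval the paper allows.
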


\begin{proof}[Proof of Lemma \ref{lemma for thm}]
Equality \eqref{eq: lemma1} follows directly from the definitions of $\hat{\w{d}}_{h,k}$ and $\w{d}_{h,k}$. By \eqref{eq: lemma1} we also get $\hat{\lambda}^2(\w{x}_{k}) = - \nabla f(\w{x}_k)^T \hat{\w{d}}_{h,k}$. For inequality \eqref{eq: lemma2}, since $\w{Q}_{H,k}$ is symmetric positive definite, we have
\begin{align*}
    \w{\hat{d}}_{h,k}^T \nabla^{2} f(\w{x}_k) \hat{\w{d}}_{h,k} & = \nabla f(\w{x}_k)^T \w{P}_k \w{Q}_{H,k}^{-1} \w{R}_k \nabla^{2} f(\w{x}_k) \w{P}_k \w{Q}_{H,k}^{-1} \w{R}_k \nabla f(\w{x}_k)\\
    & = \w{z}^T \w{Q}_{H,k}^{-\frac{1}{2}} (\w{R}_k \nabla^{2} f(\w{x}_k) \w{P}_k) \w{Q}_{H,k}^{-\frac{1}{2}}     \w{z}
\end{align*}
where $\w{z}:= \w{Q}_{H,k}^{-\frac{1}{2}} \w{R}_k \nabla f(\w{x}_k)$ and note that $\w{z}^T \w{z} = \hat{\lambda}^2(\w{x}_{k})$. By construction of $\w{Q}_{H,k}^{-1}$, it holds that
\begin{align*}
    \w{Q}_{H,k}^{-\frac{1}{2}} (\w{R}_k \nabla^{2} f(\w{x}_k) \w{P}_k) \w{Q}_{H,k}^{-\frac{1}{2}} & \preceq \sigma_{\max} (\w{Q}_{H,k}^{-\frac{1}{2}} (\w{R}_k \nabla^{2} f(\w{x}_k) \w{P}_k) \w{Q}_{H,k}^{-\frac{1}{2}}) \w{I}_N \\
    &= \max \left\lbrace \frac{\sigma_{k,1}}{\sigma_{k,1}}, \ldots, \frac{\sigma_{k,p+1}}{\sigma_{k, p+1}}, \frac{\sigma_{k, p+2}}{\sigma_{k, p+1}}, \ldots, \frac{\sigma_{k, N}}{\sigma_{k, p+1}} \right\rbrace \w{I}_N \\
    &= \w{I}_N 
\end{align*}
Then, $   \w{z}^T \w{Q}_{H,k}^{-\frac{1}{2}} (\w{R}_k \nabla^{2} f(\w{x}_k) \w{P}_k) \w{Q}_{H,k}^{-\frac{1}{2}} \w{z} \leq \hat{\lambda}^2(\w{x}_{k})$ and \eqref{eq: lemma2} is proved. Next,
\begin{align*}
    \| [\nabla^{2} f(\w{x}_k)]^\frac{1}{2} (\w{d}_{h,k} - \hat{\w{d}}_{h,k}) \|^2 & = 
    \w{d}_{h,k}^T \nabla^{2} f(\w{x}_k) \w{d}_{h,k} - 2\w{d}_{h,k}^T \nabla^{2} f(\w{x}_k) \hat{\w{d}}_{h,k} + \w{\hat{d}}_{h,k}^T \nabla^{2} f(\w{x}_k)  \hat{\w{d}}_{h,k}\\
    & \leq \lambda(\w{x}_{k})^2 - \hat{\lambda}(\w{x}_{k})^2,
\end{align*}
where the last inequality follows from \eqref{newton decrement}, \eqref{eq: lemma1} and \eqref{eq: lemma2}. This proves inequality \eqref{eq: lemma3}. For the bounds in \eqref{eq: lemma4} we start by proving $\hat{\lambda}(\w{x}_{k}) \leq \tilde{\lambda}(\w{x}_{k})$. We have
\begin{align*}
    \hat{\lambda}(\w{x}_{k})^2 - \tilde{\lambda}(\w{x}_{k})^2 & = (\w{R} \nabla f(\w{x}_k))^T (\w{Q}_{H,k}^{-1} - [\w{R}_k \nabla^{2} f(\w{x}_k) \w{P}_k]^{-1}) (\w{R}_k \nabla f(\w{x}_k)) \\
    & = ( \w{Q}_{H,k}^{-\frac{1}{2}} \w{R}_k \nabla f(\w{x}_k))^T (\w{I}_N - \w{Q}_{H,k}^{\frac{1}{2}} [\w{R}_k \nabla^{2} f(\w{x}_k) \w{P}_k]^{-1} \w{Q}_{H,k}^{\frac{1}{2}}) (\w{Q}_{H,k}^{-\frac{1}{2}}\w{R} \nabla f(\w{x}_k))
\end{align*}
We also have
\begin{align*}
    \w{I}_N - \w{Q}_{H,k}^{\frac{1}{2}} [\w{R} \nabla^{2} f(\w{x}_k) \w{P}_k]^{-1} \w{Q}_{H,k}^{\frac{1}{2}} &\preceq \sigma_{\max} (\w{I}_N - \w{Q}_{H,k}^{\frac{1}{2}} [\w{R}_k \nabla^{2} f(\w{x}_k) \w{P}_k]^{-1} \w{Q}_{H,k}^{\frac{1}{2}}) \w{I}_N \\
    & = \max \left\lbrace 1-\frac{\sigma_{k,1}}{\sigma_{k,1}}, \ldots, 1-\frac{\sigma_{k, p+1}}{\sigma_{k, p+1}}, \ldots, 1-\frac{\sigma_{k, p+1}}{\sigma_{k, N}} \ldots \right\rbrace \w{I}_N \\
    & = 0.
\end{align*}
Putting this all together, we take $\hat{\lambda}(\w{x}_{k}) \leq \tilde{\lambda}(\w{x}_{k})$. On the other hand, with similar arguments we have that
\begin{align*}
        \tilde{\lambda}(\w{x}_{k})^2 - \hat{\lambda}(\w{x}_{k})^2  & = \w{z}^T (\w{Q}_{H,k}^{\frac{1}{2}} [\w{R}_k \nabla^{2} f(\w{x}_k) \w{P}_k]^{-1} \w{Q}_{H,k}^{\frac{1}{2}} - \w{I}_N) \w{z}\\
        & \leq \left(\frac{\sigma_{k, p+1}}{\sigma_{k, N}} - 1\right) \w{z}^T \w{z} = \left(\frac{\sigma_{k, p+1}}{\sigma_{k, N}} - 1\right) \hat{\lambda}(\w{x}_{k})^2
\end{align*}
and thus the lower bound of \eqref{eq: lemma4} follows. Putting this all together and combining it with Lemma \ref{lemma upper bound lamda_hat}, \eqref{eq: lemma4} has been proved. Moreover, assumption \ref{ass: prob delta} and \eqref{eq: lemma4} imply $\tilde{\lambda}(\w{x}_{k}) > 0$. Then there exists $\mu_k \in (0, \frac{\tilde{\lambda}(\w{x}_{k})}{\lambda(\w{x}_{k})}]$ with probability $\delta$. Thus, we obtain
\begin{equation} \label{ineq: lower bound lamda hat}
  \mu_k \sqrt{\frac{\sigma_{k, N}}{\sigma_{k, p+1}}}   \lambda(\w{x}_{k}) \leq \hat{\lambda}(\w{x}_{k}) \leq \lambda(\w{x}_{k}),
\end{equation}
with probability $\delta$.
Therefore, by the lower bound of the last result we directly get $\hat{e}_k \leq (1 - \frac{\sigma_{k,N}}{\sigma_{k, p+1}} \mu_k^2)^{1/2} \lambda(\w{x}_{k})$, with probability $\delta$ too, which concludes the proof.
\end{proof}

\begin{lemma} \label{proposition f}
Let $f : \mathbb{R}^n \rightarrow \mathbb{R}$ be a strictly convex self-concordant function and $\w{x}_{k+1} = \w{x}_k - t_k \w{P}_k \w{Q}_{H,k}^{-1} \w{R}_k \nabla f (\w{x}_k)$. If $ \hat{\lambda}(\w{x}_{h,k})< \frac{1}{t_{h,k}}$,  we have that
\begin{enumerate}[label=(\roman*)]
 \item $\nabla^2 f(\w{x}_{k+1}) \preceq \frac{1}{(1 - t_{k}\hat{\lambda}(\w{x}_{k}))^2} \nabla^2 f(\w{x}_{k}),$ \label{proposition f case i}
 \item $[\nabla^2 f(\w{x}_{k+1})]^{-1} \preceq  \frac{1}{( 1 - t_{k}\hat{\lambda}(\w{x}_{k}))^2} 
 [\nabla^2 f(\w{x}_{k})]^{-1}$. \label{proposition f case ii}
\end{enumerate}
\end{lemma}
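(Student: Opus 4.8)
The plan is to reduce both claims to the self-concordance sandwich inequality \eqref{relation sandwitz}, which bounds the Hessian at a nearby point both above and below by the Hessian at the base point, scaled by $1/(1-r)^2$ and $(1-r)^2$ respectively, where $r := \|\w{x}_{k+1} - \w{x}_k\|_{\w{x}_k}$. The only input this requires is that $\w{x}_{k+1}$ lies in the Dikin ellipsoid $S(\w{x}_k)$, i.e. that $r < 1$, together with a usable upper bound on $r$ expressed through $\hat{\lambda}(\w{x}_k)$.

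First I would compute the step length in the local norm. Since $\w{x}_{k+1} - \w{x}_k = t_k \hat{\w{d}}_{h,k}$, we have $r = t_k \| \hat{\w{d}}_{h,k}\|_{\w{x}_k} = t_k (\hat{\w{d}}_{h,k}^T \nabla^2 f(\w{x}_k) \hat{\w{d}}_{h,k})^{1/2}$. Invoking \eqref{eq: lemma2} from Lemma \ref{lemma for thm appendix}, namely $\hat{\w{d}}_{h,k}^T \nabla^2 f(\w{x}_k)\hat{\w{d}}_{h,k} \leq \hat{\lambda}^2(\w{x}_k)$, gives $r \leq t_k \hat{\lambda}(\w{x}_k)$. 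The hypothesis $\hat{\lambda}(\w{x}_k) < 1/t_k$ then ensures $r \leq t_k \hat{\lambda}(\w{x}_k) < 1$, so $\w{x}_{k+1} \in S(\w{x}_k)$ and \eqref{relation sandwitz} is applicable.

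Next I would read off each part from the two sides of \eqref{relation sandwitz}. The upper side gives $\nabla^2 f(\w{x}_{k+1}) \preceq (1-r)^{-2}\nabla^2 f(\w{x}_k)$, which is the template for part \ref{proposition f case i}. For part \ref{proposition f case ii} I would instead use the lower side $(1-r)^2 \nabla^2 f(\w{x}_k) \preceq \nabla^2 f(\w{x}_{k+1})$ and invert it; since inversion reverses the Loewner order on positive-definite matrices, this produces $[\nabla^2 f(\w{x}_{k+1})]^{-1} \preceq (1-r)^{-2}[\nabla^2 f(\w{x}_k)]^{-1}$. So the two claims correspond exactly to the two halves of the sandwich bound.

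Finally, in both cases I would replace the unknown $r$ by the computable bound $t_k\hat{\lambda}(\w{x}_k)$. Because $0 \le r \le t_k\hat{\lambda}(\w{x}_k) < 1$, monotonicity of $x \mapsto (1-x)^{-2}$ on $[0,1)$ gives $(1-r)^{-2} \le (1-t_k\hat{\lambda}(\w{x}_k))^{-2}$, and multiplying the positive-semidefinite matrices $\nabla^2 f(\w{x}_k)$ and $[\nabla^2 f(\w{x}_k)]^{-1}$ by the larger scalar preserves $\preceq$, so transitivity yields the stated bounds. The one point that needs care is exactly this last monotonicity-plus-transitivity step: substituting a larger scalar inside a Loewner inequality is legitimate only because the matrix being scaled is PSD, so I would state that explicitly rather than treat it as automatic. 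Everything else is a direct substitution into \eqref{relation sandwitz}.
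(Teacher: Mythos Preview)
Your proposal is correct and follows essentially the same approach as the paper: apply the self-concordance sandwich bound \eqref{relation sandwitz} with $r = t_k\|\hat{\w{d}}_{h,k}\|_{\w{x}_k}$, bound $r$ above by $t_k\hat{\lambda}(\w{x}_k)$ via \eqref{eq: lemma2}, then read off part \ref{proposition f case i} from the upper half and obtain part \ref{proposition f case ii} by inverting the lower half using strict convexity. If anything, you are slightly more explicit than the paper about the monotonicity/PSD step needed to pass from $(1-r)^{-2}$ to $(1-t_k\hat{\lambda}(\w{x}_k))^{-2}$.
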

\begin{proof}
Consider the case \textit{\ref{proposition f case i}}. From the upper bound in \eqref{relation sandwitz} that \changes{arises} for self-concordant functions we have that 
\begin{align*}
 \nabla^2 f(\w{x}_{k+1}) & \preceq \frac{1}{(1 - t_{k} \| \hat{\w{d}}_{h,k} \|_{\w{x}_{k}})^2}\nabla^2 f(\w{x}_{k}) \\
			      & \preceq \frac{1}{(1 - t_{k}\hat{\lambda}(\w{x}_{k}))^2} \nabla^2 f(\w{x}_{k}).
\end{align*}
where the last inequality holds from \eqref{eq: lemma2}.
As for the case \textit{\ref{proposition f case ii}}, we make use of the lower bound in \eqref{relation sandwitz}, and thus, by \eqref{eq: lemma2}, we have
\begin{equation*}
 \nabla^2 f(\w{x}_{k+1}) \succeq (1 - t_{k}\hat{\lambda}(\w{x}_{k}))^2 \nabla^2 f(\w{x}_{k}).
\end{equation*}
Since, further, $f$ is strictly convex we take  
\begin{equation*}
 [\nabla^2 f(\w{x}_{k+1})]^{-1} \preceq  \frac{1}{(1 - t_{k}\hat{\lambda}(\w{x}_{k}))^2} [\nabla^2 f(\w{x}_{k})]^{-1},
\end{equation*}
which concludes the proof.
\end{proof}

The next lemma estimates the sub-optimality gap of the process.
\begin{lemma} \label{lemma suboptimality gap random}
Let $\lambda(\w{x}_{k}) < 1 $. Then,
\begin{equation*}
    \omega(\lambda(\w{x}_{k})) \leq f(\w{x}_k) - f(\w{x}^*) \leq \omega_*(\lambda(\w{x}_{k})).
\end{equation*}
If in addition $\lambda(\w{x}_{k}) < 0.68 $ then,
\begin{equation*}
     f(\w{x}_k) - f(\w{x}^*) \leq \lambda(\w{x}_{k})^2.
\end{equation*}
\end{lemma}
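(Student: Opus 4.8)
The plan is to invoke the two-sided self-concordant estimate of Lemma~\ref{lemma suboptimality nesterov} at the minimizer and exploit the vanishing of the gradient there. First I would apply Lemma~\ref{lemma suboptimality nesterov} with the substitution $\w{x} \leftarrow \w{x}^*$ and $\w{y} \leftarrow \w{x}_k$. Since $\w{x}^*$ is the unconstrained minimizer of the strictly convex self-concordant $f$, we have $\nabla f(\w{x}^*) = 0$, so the linear term $\langle \nabla f(\w{x}^*), \w{x}_k - \w{x}^* \rangle$ vanishes and the gradient difference collapses to $\nabla f(\w{x}_k) - \nabla f(\w{x}^*) = \nabla f(\w{x}_k)$. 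The dual local norm appearing in the lemma is then $\| \nabla f(\w{x}_k) \|_{\w{x}_k}^*$, which by the definition in \eqref{newton decrement} is exactly the Newton decrement $\lambda(\w{x}_k)$.

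With these substitutions the lower-bound part of Lemma~\ref{lemma suboptimality nesterov} yields $f(\w{x}_k) - f(\w{x}^*) \geq \omega(\lambda(\w{x}_k))$ directly. For the upper bound I would observe that the side condition $\| \nabla f(\w{x}_k) - \nabla f(\w{x}^*) \|_{\w{x}_k}^* < 1$ of the lemma is precisely the hypothesis $\lambda(\w{x}_k) < 1$; hence the second inequality of Lemma~\ref{lemma suboptimality nesterov} applies and gives $f(\w{x}_k) - f(\w{x}^*) \leq \omega_*(\lambda(\w{x}_k))$. Together these establish the two-sided estimate $\omega(\lambda(\w{x}_k)) \leq f(\w{x}_k) - f(\w{x}^*) \leq \omega_*(\lambda(\w{x}_k))$.

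For the final sharpened bound I would appeal to the scalar inequality $\omega_*(x) \leq x^2$ valid on $0 \leq x \leq 0.68$, which is recorded in the background material. Chaining it with the upper bound just derived gives $f(\w{x}_k) - f(\w{x}^*) \leq \omega_*(\lambda(\w{x}_k)) \leq \lambda(\w{x}_k)^2$ whenever $\lambda(\w{x}_k) < 0.68$, as required.

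There is no substantive obstacle here; the result is an immediate specialization of Lemma~\ref{lemma suboptimality nesterov}. The only point demanding care is the bookkeeping of the local-norm convention, namely verifying that the weight point of the dual norm $\| \cdot \|_{\w{y}}^*$ in Lemma~\ref{lemma suboptimality nesterov} is $\w{y} = \w{x}_k$ rather than $\w{x}^*$, so that the gradient term collapses to the Newton decrement $\lambda(\w{x}_k)$ evaluated at the current iterate and not at the minimizer. Confirming the threshold $0.68$ for the quadratic majorant of $\omega_*$ is the other routine check.
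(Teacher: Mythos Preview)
Your proposal is correct and follows exactly the paper's approach: the paper's proof simply states that the first result follows directly from Lemma~\ref{lemma suboptimality nesterov} and that the second result follows from the scalar inequality $\omega_*(x) \leq x^2$ for $0 \leq x \leq 0.68$. You have merely spelled out the substitution $\w{x} \leftarrow \w{x}^*$, $\w{y} \leftarrow \w{x}_k$ and the bookkeeping of the dual local norm in more detail than the paper does.
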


\begin{proof}
The first result follows directly from Lemma \ref{lemma suboptimality nesterov} 
Further, it holds that $\omega_*(x) \leq x^2, 0 \leq x \leq 0.68$, and thus we take the second result.
\end{proof}
Thus $\lambda(\w{x}_{k})$ can be used as an exit condition for Algorithm \ref{algorithm}. Combining the result above with \eqref{ineq: lower bound lamda hat} one may take 
\begin{equation*}
    f(\w{x}_k) - f(\w{x}^*) \leq \omega_*\left(  
\frac{ \sigma_{k,p+1}^{1/2}  }{\mu_k \sigma_{k,N}^{1/2}}   \hat{\lambda}(\w{x}_{k})\right)
\end{equation*}
and thus, for $\hat{\lambda}(\w{x}_{k}) < \mu_k \sqrt{\frac{\sigma_{k, N}}{\sigma_{k, p+1}}}$, $\hat{\lambda}(\w{x}_{k})$ can be used as an exit condition for Algorithm \ref{algorithm}.

\begin{lemma} \label{lemma reduction f random}
Let $t_k := \frac{1}{1 + \hat{\lambda}(\w{x}_{k})}$. Then, for any $k \in \mathbb{N}$ we have
\begin{equation*}
    f(\w{x}_{k+1}) - f(\w{x}_k) \leq -\omega(\hat{\lambda}(\w{x}))
\end{equation*}
where $\omega(x)$ is defined in \eqref{omegas}.
\end{lemma}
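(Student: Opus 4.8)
The plan is to reproduce the standard damped self-concordant descent estimate, but driven by the \emph{approximate} decrement $\hat{\lambda}(\w{x}_k)$ rather than the exact Newton decrement, and to rely on the two structural facts already established in Lemma~\ref{lemma for thm}. First I would invoke Lemma~\ref{lemma f reduction} with $\w{y}=\w{x}_{k+1}=\w{x}_k+t_k\hat{\w{d}}_{h,k}$ and $\w{x}=\w{x}_k$, so that $\w{y}-\w{x}=t_k\hat{\w{d}}_{h,k}$ and $\|\w{y}-\w{x}\|_{\w{x}_k}=t_k\|\hat{\w{d}}_{h,k}\|_{\w{x}_k}$. To legitimately apply that lemma I must check its hypothesis $\|\w{y}-\w{x}\|_{\w{x}_k}<1$. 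This follows from item~2 of Lemma~\ref{lemma for thm}, namely $\|\hat{\w{d}}_{h,k}\|_{\w{x}_k}^2=\hat{\w{d}}_{h,k}^T\nabla^2 f(\w{x}_k)\hat{\w{d}}_{h,k}\le\hat{\lambda}^2(\w{x}_k)$, together with the prescribed step $t_k=1/(1+\hat{\lambda}(\w{x}_k))$, which gives $t_k\|\hat{\w{d}}_{h,k}\|_{\w{x}_k}\le \hat{\lambda}(\w{x}_k)/(1+\hat{\lambda}(\w{x}_k))<1$.

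Next I would substitute the first-order term. By item~1 of Lemma~\ref{lemma for thm} (equivalently the identity $\hat{\lambda}^2(\w{x}_k)=-\nabla f(\w{x}_k)^T\hat{\w{d}}_{h,k}$ noted in its proof), we have $\langle\nabla f(\w{x}_k),\hat{\w{d}}_{h,k}\rangle=-\hat{\lambda}^2(\w{x}_k)$. Using monotonicity of $\omega_*$ on its domain $[0,1)$ to replace $\omega_*(t_k\|\hat{\w{d}}_{h,k}\|_{\w{x}_k})$ by the larger $\omega_*(t_k\hat{\lambda}(\w{x}_k))$, Lemma~\ref{lemma f reduction} yields
\begin{equation*}
 f(\w{x}_{k+1})-f(\w{x}_k)\le -t_k\hat{\lambda}^2(\w{x}_k)+\omega_*\!\left(t_k\hat{\lambda}(\w{x}_k)\right).
\end{equation*}

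Finally I would carry out the algebraic reduction that is the heart of the damped-Newton analysis. Writing $\lambda:=\hat{\lambda}(\w{x}_k)$ and inserting $t_k=1/(1+\lambda)$, the first term is $-\lambda^2/(1+\lambda)$, while $\omega_*(\lambda/(1+\lambda))=-\lambda/(1+\lambda)-\log(1-\lambda/(1+\lambda))=-\lambda/(1+\lambda)+\log(1+\lambda)$ directly from the definition of $\omega_*$ in \eqref{omegas}. Summing, the two fractional terms combine into $-(\lambda^2+\lambda)/(1+\lambda)=-\lambda$, leaving $f(\w{x}_{k+1})-f(\w{x}_k)\le -\lambda+\log(1+\lambda)=-\omega(\lambda)$, which is exactly the claimed bound. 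I do not expect a genuine obstacle here: the only care needed is the domain check for Lemma~\ref{lemma f reduction} and the correct use of the bounds from Lemma~\ref{lemma for thm}; the rest is the familiar cancellation $-t\lambda^2+\omega_*(t\lambda)=-\omega(\lambda)$ at the minimizing step $t=1/(1+\lambda)$.
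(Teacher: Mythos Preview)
Your proposal is correct and follows essentially the same approach as the paper's proof: apply Lemma~\ref{lemma f reduction}, use \eqref{eq: lemma2} together with the identity $\hat{\lambda}^2(\w{x}_k)=-\nabla f(\w{x}_k)^T\hat{\w{d}}_{h,k}$ and the monotonicity of $\omega_*$ to reach $-t_k\hat{\lambda}^2+\omega_*(t_k\hat{\lambda})$, then substitute $t_k=1/(1+\hat{\lambda})$ and simplify. If anything, you are slightly more careful than the paper in explicitly verifying the hypothesis $\|\w{x}_{k+1}-\w{x}_k\|_{\w{x}_k}<1$ needed to invoke Lemma~\ref{lemma f reduction}.
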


\begin{proof}
By Lemma \ref{lemma f reduction} we have that
\begin{align*}
    f(\w{x}_{k} + t_{k} \w{\hat{d}}_{h,k}) & \leq f(\w{x}_k) + \langle \nabla f(\w{x}_k),  \w{x}_{k+1} - \w{x}_k \rangle + \omega_*(\|\w{x}_{k+1} - \w{x}_k \|_{\w{x}_k}) \\
    & = f(\w{x}_k) + t_k \nabla f(\w{x}_k)^T \w{\hat{d}}_{h,k} + \omega_*(t_k \|\w{\hat{d}}_{h,k} \|_{\w{x}_k})\\
    & \leq f(\w{x}_k) - t_k \hat{\lambda}(\w{x})^2 +  \omega_*(t_k \hat{\lambda}(\w{x}))
\end{align*}
where for the last inequality we used \eqref{eq: lemma2}, the monotonicity of $\omega_*(x)$ and $\hat{\lambda}(\w{x})^2 = - \nabla f(\w{x}_k)^T \w{\hat{d}}_{h,k}$. Then, using the definition of $t_k$ we take
\begin{align*}
f(\w{x}_{k} + t_k \w{\hat{d}}_{h,k}) &\leq f(\w{x}_k) - \frac{\hat{\lambda}(\w{x}_k)^2}{1 + \hat{\lambda}(\w{x}_{h,k})} +  \omega_*\left(\frac{\hat{\lambda}(\w{x}_k)}{1 + \hat{\lambda}(\w{x}_{h,k})}\right)\\
&= f(\w{x}_k) - \hat{\lambda}(\w{x}_{k}) + \log \left( 1 + \hat{\lambda}(\w{x}_{k}) \right).
\end{align*}
which concludes the proof.
\end{proof}

\begin{lemma} \label{lemma: core probablity}
    Let $\lambda(\w{x}_{k}) < 1$ and set $t_{h,k} = 1$. Further, suppose that \cref{ass: prob delta} holds. Then
    \begin{equation*}
      \lambda(\w{x}_{k+1}) \leq  \frac{ \sqrt{ 1 - \bar{\varepsilon}_k \mu_k^2} + \lambda(\w{x}_{k}) }{\left( 1 - \lambda(\w{x}_{k}) \right)^2} \lambda(\w{x}_{k}),
 \end{equation*}
 with probability $\delta$.
\end{lemma}

\begin{proof}

By Lemma \ref{proposition f}, inequality \eqref{eq: lemma4} and $t_k=1$ we have
\begin{equation} \label{ineq: key lamda}
\nabla^2 f(\w{x}_{k+1}) \preceq \frac{1}{(1 - \lambda(\w{x}_{k}))^2} \nabla^2 f(\w{x}_{k}),
\end{equation}
with probability one. Now, by the definition of the Newton decrement we have that
\begin{equation*}
 \lambda(\w{x}_{k+1}) = \left[ \nabla f(\w{x}_{k+1})^T [ \nabla^2 f(\w{x}_{k+1})]^{-1} \nabla f(\w{x}_{k+1})  \right]^{1/2},
\end{equation*}
and combining this with inequality \eqref{ineq: key lamda} we take
\begin{equation} \label{ineqlambda}
 \lambda(\w{x}_{k+1}) \leq \frac{1}{1- \lambda(\w{x}_{k})} \left\| [\nabla^2 f ({\w{x}_{k}})]^{-1/2} \nabla f(\w{x}_{k+1}) \right\|_2.
\end{equation}
Denote $\w{Z} := [\nabla^2 f ({\w{x}_{k}})]^{-1/2} \nabla f(\w{x}_{k+1})$. Using the fact that 
\begin{equation*}
\nabla f(\w{x}_{k+1}) = \int_0^1 \nabla^2 f({\w{x}_{k}} + y \w{\hat{d}}_{h,k}) \w{\hat{d}}_{h,k} \ dy + \nabla f(\w{x}_{k})
\end{equation*}
we see that
\begin{align*}
 \w{Z} & = [\nabla^2 f ({\w{x}_{k}})]^{-1/2} \left( \int_0^1 \nabla^2 f ({\w{x}_{k}} + y \w{\hat{d}}_{h,k}) \w{\hat{d}}_{h,k} \ dy 
	       + \nabla f(\w{x}_{k})  \right) \\
       & =  \underbrace{\int_0^1 [\nabla^2 f ({\w{x}_{k}})]^{-1/2} \nabla^2 f ({\w{x}_{k}} +y\w{\hat{d}}_{h,k})
       [\nabla^2 f ({\w{x}_{k}})]^{-1/2} \ dy}_\w{T} \ [\nabla^2 f ({\w{x}_{k}})]^{1/2} \w{\hat{d}}_{h,k} -    [\nabla^2 f ({\w{x}_{k}})]^{1/2} \w{d}_{h,k}, \\
\end{align*}
where $\w{d}_{h,k}$ is the Newton direction. Next adding and subtracting the quantity $\w{T}[\nabla^2 f ({\w{x}_{k}})]^{1/2} \w{d}_{h,k}$ we have that
\begin{equation*}
    \w{Z} = \w{T}[\nabla^2 f ({\w{x}_{k}})]^{1/2} (\w{\hat{d}}_{h,k} - \w{d}_{h,k}) + (\w{T} - \w{I}_{N \times N})[\nabla^2 f ({\w{x}_{k}})]^{1/2} \w{d}_{h,k},
\end{equation*}
and thus
\begin{equation} \label{ineq Zs}
    \left\| \w{Z} \right\| \leq \left \|\underbrace{\w{T}[\nabla^2 f ({\w{x}_{k}})]^{1/2} (\w{\hat{d}}_{h,k} - \w{d}_{h,k})}_{\w{Z}_1} \right\|_2 + \left\| \underbrace{(\w{T} - \w{I}_{N \times N})[\nabla^2 f ({\w{x}_{k}})]^{1/2} \w{d}_{h,k}}_{\w{Z}_2} \right\|_2 \\
\end{equation}
Using again \eqref{ineq: key lamda} and since, by assumption, 
$y \hat{\lambda}(\w{x}_{k})< 1$ we take
\begin{equation*}
    [\nabla^2 f ({\w{x}_{k}})]^{-1/2} \nabla^2 f ({\w{x}_{k}} +y\w{\hat{d}}_{h,k})
       [\nabla^2 f ({\w{x}_{k}})]^{-1/2} \preceq  \frac{1}{\left(1 -y \lambda(\w{x}_{k}) \right)^2} \w{I}_{N \times N}.
\end{equation*}
We are now in position to estimate both norms in \eqref{ineq Zs}. For the first one we have that
\begin{align*}
    \left\| \w{Z}_1 \right\| & \leq \left\| \int_0^1 \frac{1}{\left(1 -y \lambda(\w{x}_{k}) \right)^2} dy  \right\|_2  \left\| \ [\nabla^2 f ({\w{x}_{k}})]^{1/2} \left(
			   \w{\hat{d}}_{h,k} - \w{d}_{h,k} \right) \right\|_2 \\
			   & = \frac{1}{1 - \lambda(\w{x}_{k}) } \left\| [\nabla^2 f ({\w{x}_{k}})]^{1/2} \left(
			   \w{\hat{d}}_{h,k} - \w{d}_{h,k} \right) \right\|_2\\
	& \leq \frac{\sqrt{\lambda(\w{x}_{k})^2 - \hat{\lambda}(\w{x}_{k})^2}}{1 - \lambda(\w{x}_{h,k}) }.
\end{align*}
Next, the second norm implies 
\begin{align*}
\left\| \w{Z}_2 \right\| & \leq    \left\| \int_0^1 \left( \frac{1}{\left(1 -y \lambda(\w{x}_{k}) \right)^2} - 1 \right) dy \right\|_2 
			   \left\| [\nabla^2 f ({\w{x}_{k}})]^{1/2} \w{d}_{h,k} \right\|_2 \\
			   			    & = \frac{\lambda(\w{x}_{k})}{1 - \lambda(\w{x}_{k}) } \lambda(\w{x}_{k}).
\end{align*}
Putting this all together, inequality \eqref{ineqlambda} becomes
\begin{equation*}
      \lambda(\w{x}_{k+1}) \leq  \frac{ \sqrt{ \lambda(\w{x}_{k})^2 - \hat{\lambda}(\w{x}_{k})^2}}{\left( 1 - \lambda(\w{x}_{k}) \right)^2} + \frac{\lambda(\w{x}_{k})}{\left( 1 - \lambda(\w{x}_{k}) \right)^2}
   \lambda(\w{x}_{k}).
 \end{equation*}

 Therefore, by \cref{lemma for thm appendix} we take
\begin{equation*}
      \lambda(\w{x}_{k+1}) \leq \left( \frac{ \sqrt{ 1 - \bar{\varepsilon}_k \mu_k^2}}{\left( 1 - \lambda(\w{x}_{k}) \right)^2} + \frac{\lambda(\w{x}_{k})}{\left( 1 - \lambda(\w{x}_{k}) \right)^2} \right)
   \lambda(\w{x}_{k}),
 \end{equation*}
 with probability $\delta$.
\end{proof}

\begin{proof}[Proof of \cref{thm expects}]
    From \cref{ineq: lower bound lamda hat} we have that
    \begin{equation*}
       \mu_{\min} \sqrt{\bar{\varepsilon}_{\min}}   \lambda(\w{x}_{k}) \leq \mu_k \sqrt{\frac{\sigma_{k, N}}{\sigma_{k, p+1}}}   \lambda(\w{x}_{k}) \leq \hat{\lambda}(\w{x}_{k}),
    \end{equation*}
    with probability $\delta$ for each $k \in \N$. Using this inequality, the result \cref{lemma reduction f random} and the monotonicity of $\omega$ we take
    \begin{align*}
            f(\w{x}_{k+1}) - f(\w{x}_k) & \leq -\omega\left( \mu_{\min} \sqrt{\bar{\varepsilon}_{\min}} \lambda(\w{x})\right)  \leq -\omega\left( \mu_{\min} \sqrt{\bar{\varepsilon}_{\min}} \hat{\eta}\right),
    \end{align*}
    where the second inequality uses the assumption $\lambda(\w{x}) > \hat{\eta}$. The above bound holds with probability $\delta$, and, further, $\gamma := \omega\left( \mu_{\min} \sqrt{\bar{\varepsilon}_{\min}} \hat{\eta}\right) > 0$ as required. 

    Additionally, if $\lambda(\w{x}) \leq \hat{\eta}$, then from \cref{lemma: core probablity} we get that
            \begin{equation*}
      \lambda(\w{x}_{k+1}) \leq  \frac{ \sqrt{ 1 - \bar{\varepsilon}_{\min} \mu_{\min}^2} + \lambda(\w{x}_{k})}{\left( 1 - \lambda(\w{x}_{k}) \right)^2}  \lambda(\w{x}_{k}) \leq \frac{ \sqrt{ 1 - \bar{\varepsilon}_{\min} \mu_{\min}^2} + \hat{\eta}}{\left( 1 - \hat{\eta} \right)^2}  \lambda(\w{x}_{k}),
 \end{equation*}
with probability $\delta$. Moreover, let $\bar{\varepsilon} \in (0,1)$ and $\mu \in (0, 1)$. Then $\bar{\varepsilon}_{\min}, \mu \in (0,1]$. By $\hat{\eta} := \frac{3-\sqrt{5+4\hat{\varepsilon}}}{2}$, we have that $\hat{\eta} \in (0, \frac{3 - \sqrt{5}}{2})$. Then, from the definition of $\hat{\eta}$, it holds 
\begin{equation*}
    \frac{ \sqrt{ 1 - \bar{\varepsilon}_{\min} \mu_{\min}^2} + \hat{\eta}}{\left( 1 - \hat{\eta} \right)^2} < 1,
\end{equation*}
and thus $\lambda(\w{x}_{k+1}) < \lambda(\w{x}_{k})$. Therefore, this process converges with a composite rate and a probability $\delta$ at each $k \in \N$.

If, on the other hand, $\bar{\varepsilon} = \mu = 1$, then from \cref{lemma: core probablity} we have that
\begin{equation*}
      \lambda(\w{x}_{k+1})  \leq  \frac{ \sqrt{ 1 - \bar{\varepsilon}_k \mu_k^2} + \hat{\eta}}{\left( 1 - \hat{\eta} \right)^2}
   \lambda(\w{x}_{k}),
 \end{equation*}
 with probability $\delta$. Since $\sqrt{ 1 - \bar{\varepsilon}_k \mu_k^2} \in (0, 1)$ and using the definition of $\hat{\eta}$ as above, we obtain $\lambda(\w{x}_{k+1}) < \lambda(\w{x}_{k})$. Hence, since $\bar{\varepsilon} = \mu = 1$, then both $(\bar{\varepsilon}_k)_{k \in \N}$ and $(\mu_k)_{k \in \N}$ converge to $1$ and thus
 \begin{equation*}
           \frac{\lambda(\w{x}_{k+1})}{\lambda(\w{x}_{k})}  \leq  \frac{ \sqrt{ 1 - \bar{\varepsilon}_k \mu_k^2} + \lambda(\w{x}_{k})}{\left( 1 - \lambda(\w{x}_{k}) \right)^2} \quad \text{and} \quad \lim_{k \rightarrow \infty} \frac{ \sqrt{ 1 - \bar{\varepsilon}_k \mu_k^2} + \lambda(\w{x}_{k})}{\left( 1 - \lambda(\w{x}_{k}) \right)^2} = 0.
 \end{equation*}
Therefore, we conclude that the process converges with a super-linear rate and probability $\delta$ at each $k \in \N$.
\end{proof}

\section{Proofs of Results of Section \ref{sec: non-convex analysis}} \label{sec proof pl convergence}

\begin{proof}[Proof of Theorem  \ref{thm: reduction f with lipscitz gradients}]
    Let  $\hat{\lambda}(\w{x})$ be the the approximate decrement based on the current coarse model
\begin{equation*}
    \hat{\lambda}(\w{x}_{k}) := \left[  \nabla f (\w{x}_{k})^T \w{P}_k |\w{Q}_{H,k}^{-1}| \w{R}_k \nabla f (\w{x}_{k})\right]^{1/2},
\end{equation*}
$|\w{Q}_{H,k}^{-1}|$ are defined in \eqref{truncated reduced hessian}. The following results will be used later in the proof. By construction of $|\w{Q}_{H,k}^{-1}|$ and \cref{ass: Lipschitz gradient} for all $k \in \N$ we have
\begin{equation*}
 \nu \w{I}_n \preceq  |\sigma_{k, p+1}| \w{I}_n \preceq | \w{Q}_{H, k}| \preceq M \w{I}_n,
\end{equation*}
where the left-hand side inequality holds by (\ref{eq: truncated sigma}).
This implies
\begin{equation} \label{ineq: upper bound d_k thm PL ineq}
    \|  \hat{\w{d}}_{h,k}  \|^2 = \| \w{P}_k |\w{Q}_{H,k}^{-1}| \w{R}_k \nabla f (\w{x}_{k}) \| \leq \frac{\omega^2}{\nu} \hat{\lambda}(\w{x}_{k})^2,
\end{equation}
where, here, for simplicity, we use $ \hat{\w{d}}_{h,k} $ instead of $| \hat{\w{d}}_{h,k} |$ in \eqref{truncated coarse direction}. We also obtain
\begin{equation} \label{ineq: lower bound lambda thm PL ineq}
    \hat{\lambda}(\w{x}_{k})^2 \geq \frac{1}{M} \| \w{R}_k \nabla f (\w{x}_{k}) \|^2 \geq \frac{\hat{\mu}^2}{M} \| \nabla f (\w{x}_{k}) \|^2,
\end{equation}
with probability $\delta>0$. Furthermore, by \cref{ass: Lipschitz gradient} we take
\begin{align*}
    f(\w{x}_{k+1}) - f(\w{x}_{k})  & \leq - t_k \hat{\lambda}(\w{x}_{k})^2 + \frac{M}{2} t_k^2 \|\hat{\w{d}}_{h,k}\|_2^2 \\
     & \stackrel{(\ref{ineq: lower bound lambda thm PL ineq})}{\leq} -t_k \left( 1 - \frac{M \omega^2}{2 \nu} t_k \right) \hat{\lambda}(\w{x}_{k})^2 \\
     & \leq - \frac{\nu}{2 M \omega^2} \hat{\lambda}(\w{x}_{k})^2,
\end{align*}
where the last inequality follows from our assumption to $t_k$. Finally, using \cref{ineq: lower bound lambda thm PL ineq} into the last inequality we get the desired result with probability $\delta > 0$.
\end{proof}

\begin{proof}[Proof of Theorem \ref{thm pl convergence}]
    Combining the result of \cref{thm: reduction f with lipscitz gradients} and that for all $k \in \N$ we have that $|\sigma_{k, p+1}| \geq \nu$ we obtain
    \begin{equation*}
                f(\w{x}_{k+1}) - f(\w{x}_{k}) \leq - \frac{\hat{\mu}^2 \nu}{2 \omega^2 M} \| \nabla f(\w{x}_{k}) \|^2,
    \end{equation*}
with probability $\delta$.  Using \cref{ass: PL ineq} we get  
    \begin{equation*}
                f(\w{x}_{k+1}) - f(\w{x}_{k}) \leq - \frac{\hat{\mu}^2 \nu \xi}{ \omega^2 M} (f(\w{x}_k) - f(\w{x}^*)).
    \end{equation*}
    The result follows by adding and subtracting $f(\w{x}^*)$ in the above inequality.
\end{proof}

\section{Extra Numerical Results and Details} \label{sec extra experiments}
All the datasets used in the experiments can be found in \url{https://www.csie.ntu.edu.tw/~cjlin/libsvmtools/datasets/} and \url{http://archive.ics.uci.edu/ml/index.php}. What follows is a description of the algorithms used in comparisons.
\begin{enumerate}
    \item Gradient Descent (GD) with Armijo-rule
    \item Accelerated Gradient Descent (AccGD) with Armijo-rule and momentum $0.5$.
    \item NewSamp \changes{with} Armijo-rule \citep{erdogdu2015convergence}
    \item Adam with $t_k = 10^{-3}, \beta_1 = 0.9, \beta_2= 0.99, \epsilon = 10^{-8}$ \citep{kingma2014adam} 
    \item Sigma with Armijo-rule \citep{tsipinakis2024multilevel}
    \item SigmaSVD with Armijo-rule
    \item Cubic Regularization of the Newton's method (Cubic Newton) with line search \citep{nesterov2018lectures}
\end{enumerate}
To be fair in the comparisons we perform the same step size strategy for all algorithms but Adam. Note that NewSamp forms the Hessian matrix as in \eqref{svd Q} and although it is not directly comparable to our approach, we include it in our experiments to show that our approach outperforms sub-sampled Newton methods. Similar to $N$, we denote $|S_m|$ the number of samples that NewSamp uses to form the Hessian. In addition, the number $p$ of the eigenvalues is selected the same for both SigmaSVD and NewSamp in all the experiments. The results of this section were generated on standard laptop machine with Intel i7-10750H CPU processor. The numerical results of main text were generated on GPU processor in Google's colab. The code for generating all the numerical results will be uploaded on github in the near future. 

\subsection{Non-linear regression}
\begin{table}[t]
    \centering
	\begin{tabular}{ |l|c|c|c|c|c| }
		\hline
		Datasets & Problem & $m$ & $n$ & $ N$ & $p$    \\
		\hline
		CTslices & Non-linear least-squares & $53,500$  & $ 385 $ & $ 0.5n $ & $60$  \\
		CovType & Non-linear least-squares & $581,012$  & $ 54 $ & $ 0.7n $ & $5$ \\
		Gisette & Non-linear least-squares & $6,000$  & $ 5,000 $ & $ 0.5n $ & $350$ \\
		W8T & Non-linear least-squares & $14,951$  & $ 300 $ & $ 0.5n $ & $60$ \\		
		\hline
	\end{tabular}
    \caption{Set-up and dataset details for non-convex, non-linear regression problem.}
    \label{tab: non-lin reg}
\end{table}
In this section we revisit the non-convex problem of Section \ref{sec experiments} to perform simulations on different datasets. Full details on the datasets used for the non-linear least-squares problem and how the parameters $N$ and $p$ are selected are given in Table \ref{tab: non-lin reg}. Figure \ref{fig extra comparisons} shows that SigmaSVD, Adam and Cubic Newton are the best algorithms as they return the lowest training errors, yet SigmaSVD does better in problems with several saddle points or flat areas. The three algorithms perform similar in terms of GPU time when the problem dimensions is small. However, for problems with $n$ large the results are favourable for SigmaSVD.

\begin{figure*}[!t]
\centering

\begin{subfigure}{0.48\textwidth}
    \centering
    \includegraphics[width=\linewidth]{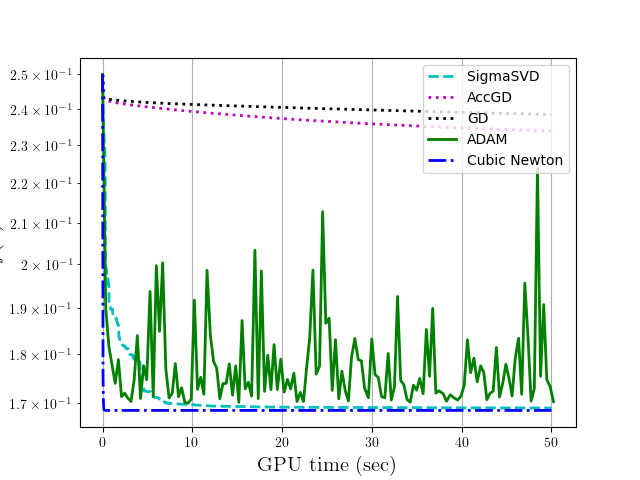}
    \caption{CovType}
    \label{cov1}
\end{subfigure}
\hfill
\begin{subfigure}{0.48\textwidth}
    \centering
    \includegraphics[width=\linewidth]{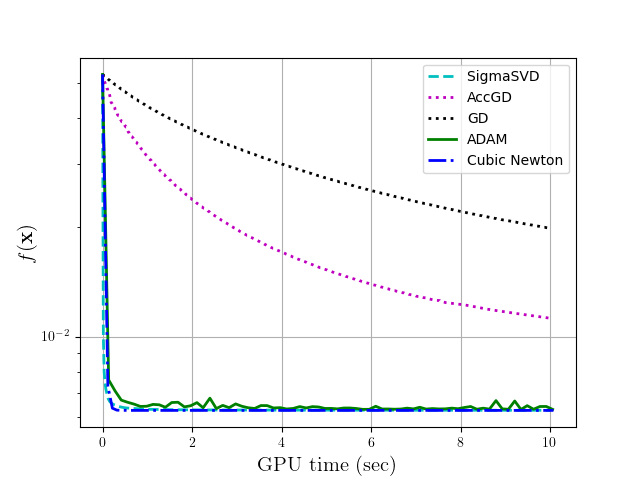}
    \caption{CtSlices}
    \label{ct1}
\end{subfigure}

\par\bigskip

\begin{subfigure}{0.48\textwidth}
    \centering
    \includegraphics[width=\linewidth]{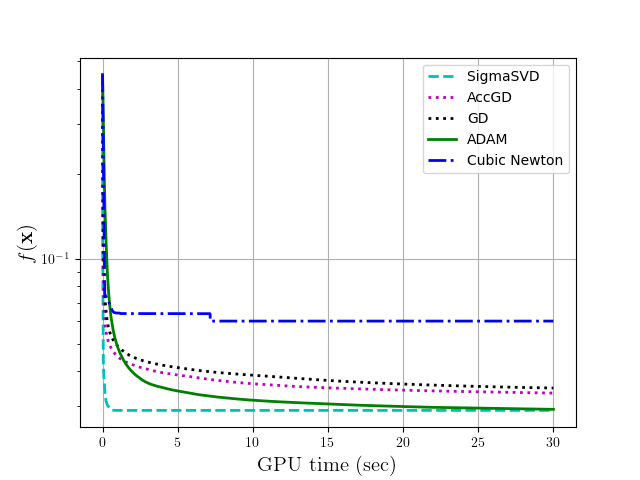}
    \caption{w8t}
    \label{w8t1}
\end{subfigure}
\hfill
\begin{subfigure}{0.48\textwidth}
    \centering
    \includegraphics[width=\linewidth]{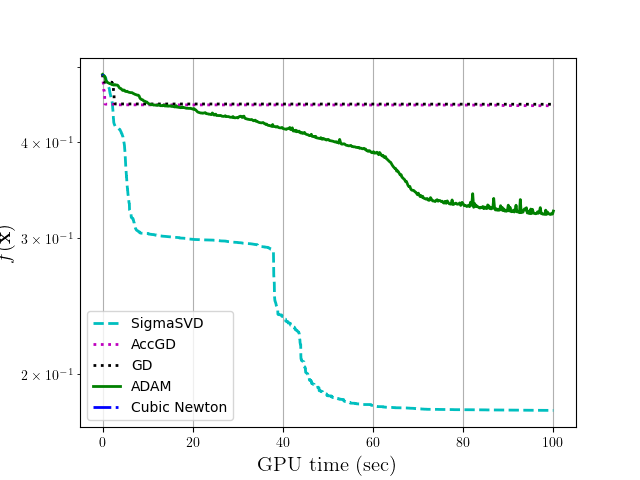}
    \caption{CovType}
    \label{cov2}
\end{subfigure}

\par\bigskip

\begin{subfigure}{0.48\textwidth}
    \centering
    \includegraphics[width=\linewidth]{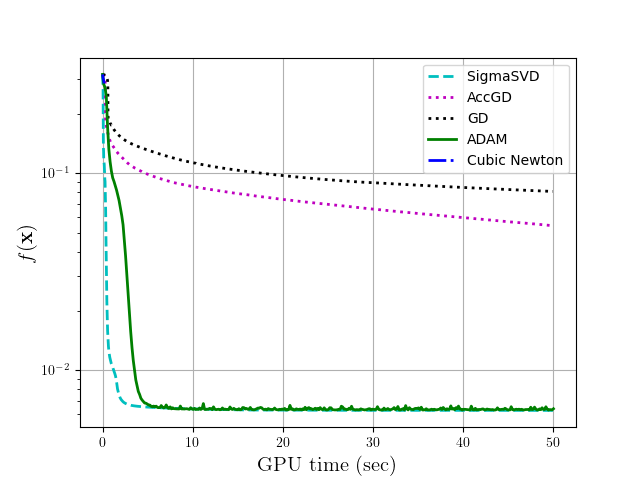}
    \caption{CtSlices}
    \label{ct2}
\end{subfigure}
\hfill
\begin{subfigure}{0.48\textwidth}
    \centering
    \includegraphics[width=\linewidth]{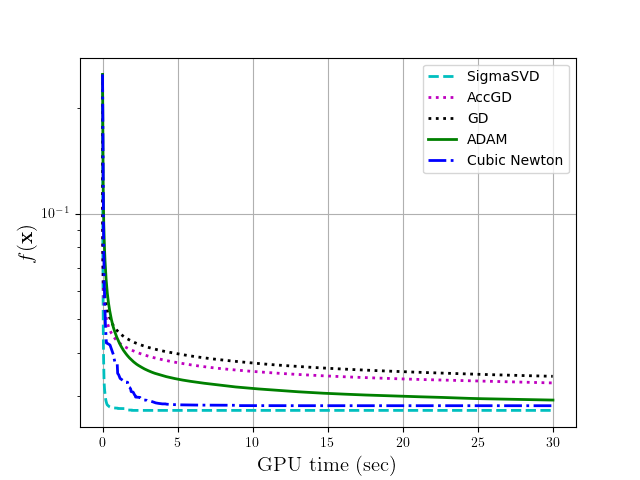}
    \caption{w8t}
    \label{w8t2}
\end{subfigure}

\caption{Non-convex minimization. All methods in plots from (a) to (c) are initialized at the origin while from (d) to (f) the initializer is selected randomly from a Gaussian $\mathcal{N}(0,1)$.}
\label{fig extra comparisons}
\end{figure*}

\subsection{Log-linear regression}
Here we minimize the following self-concordant function,
\begin{equation*}
\w{x}^* = \underset{\w{x}\in\R^n }{\operatorname{arg \ min}} f(\w{x}) = \underset{\w{x}\in\R^n }{\operatorname{arg \ min}} \left( -\sum_{i=1}^m \log (b_i - \w{a}_i^T \w{x}) \right). 
\end{equation*}
We generate two datasets to illustrate the efficiency of the proposed method for the regimes $m > n$ and $n > m$ as follows: $\{\w{a_i}\}_{i=1}^m$ is generated from the multivariate Gaussian distribution with zero mean and unit variance and $\{b_i\}_{i=1}^m$ from uniform distribution. Full details and the set-up for algorithm \ref{algorithm} are given in Table \ref{tab: log linear}. 
\begin{table}[t]
    \centering
	\begin{tabular}{ |l|c|c|c|c|c|c| }
		\hline
		Datasets & Problem & $m$ & $n$ & $ N$ & $ |S_m| $ & $p$   \\
		\hline
		Synthetic & Log linear model & $10,000$  & $ 1,000 $ & $ 0.5n $ & $ 0.3m$ & $150$  \\
		Synthetic & Log linear & $1,000$  & $ 10,000 $ & $ 0.1n $ & -  & $150$ \\
		\hline
	\end{tabular}
    \caption{Set-up and dataset details for Log-linear regression.}
    \label{tab: log linear}
\end{table}
\begin{figure*}[!t]
\centering

\begin{subfigure}{0.48\textwidth}
    \centering
    \includegraphics[width=\linewidth]{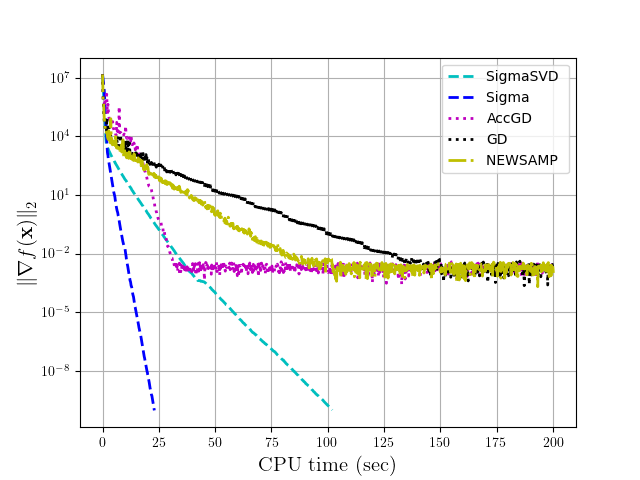}
    \caption{$m > n$}
    \label{fig: log lin sec}
\end{subfigure}
\hfill
\begin{subfigure}{0.48\textwidth}
    \centering
    \includegraphics[width=\linewidth]{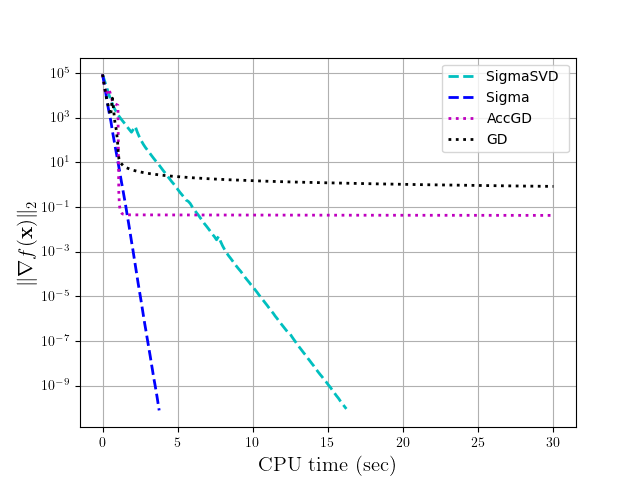}
    \caption{$m > n$}
    \label{fig: log lin rev sec}
\end{subfigure}

\par\bigskip

\begin{subfigure}{0.48\textwidth}
    \centering
    \includegraphics[width=\linewidth]{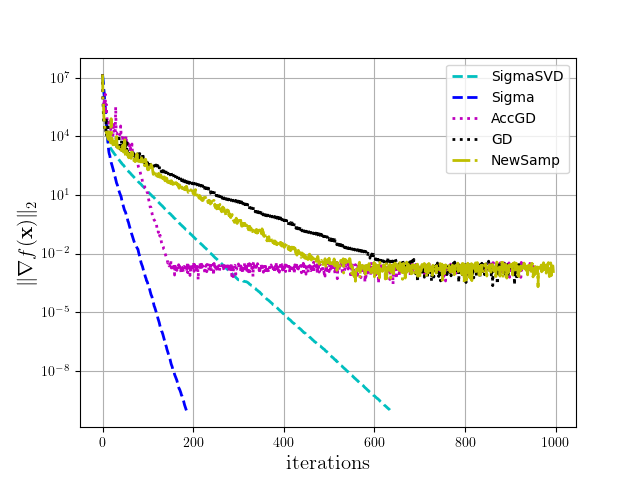}
    \caption{$n > m$}
    \label{fig: log lin iter}
\end{subfigure}
\hfill
\begin{subfigure}{0.48\textwidth}
    \centering
    \includegraphics[width=\linewidth]{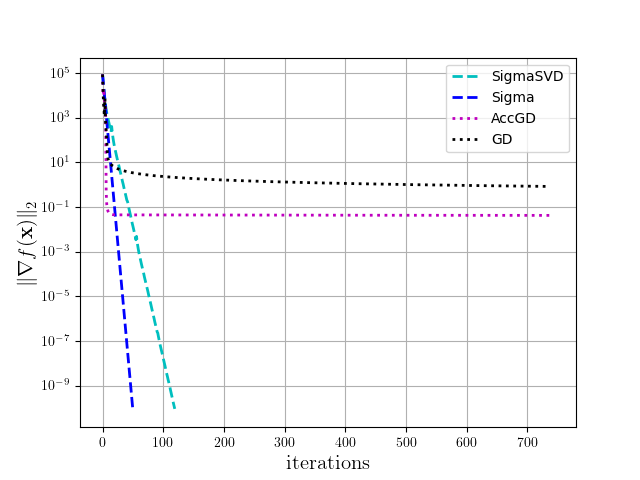}
    \caption{$n > m$}
    \label{fig: log lin rev iter}
\end{subfigure}

\caption{Log Linear Regression. Plots (a) and (c) show comparisons between the optimization algorithms for the regime $m > n$ while (b) and (d) for the regime $m < n$.}
\label{fig: log linear}
\end{figure*}

Figure \ref{fig: log linear} shows the performance of the optimization algorithms. Due to the domain of this problem, the performance of Adam is missing. The performance of NewSamp is missing from the experiment for the regime $n > m$ due to the large value of $n$. Clearly, the fastest method is Sigma which achieves a very fast super-linear convergence rate. As shown in Theorem \ref{thm expects}, the reason for the difference in the performance between Sigma and SigmaSVD is that the ratio $\sigma_{p+1} / \sigma_n$ is large which indicates that performing SVD on the \changes{reduced Hessian matrix} we discard much of the important second-order information of the problem. However, even in such a scenario, in contrast to NewSamp and gradient descent methods, SigmaSVD is able to converge to the global minimum with a super-linear rate.

\subsection{Logistic regression}

In this section we are concerned with the problem of finding the maximum likelihood in generalized linear models. In particular, we consider the regularized logistic regression which as an optimization problem takes the following form,
\begin{equation*}
\w{x}^* = \underset{\w{x}\in\R^n }{\operatorname{arg \ min}} f(\w{x}) = \underset{\w{x}\in\R^n }{\operatorname{arg \ min}} \left( \frac{1}{m} \sum_{i=1}^m \log (1 + e^{-b_i \w{a}_i^T \w{x}_h}) + \frac{\ell}{2} \| \w{x} \|_2^2
\right). 
\end{equation*}
Note that the logistic model is a strongly convex function with Lipschitz continuous Hessian matrix. 
The details and results for the logistic regression experiment are given in Table \ref{tab: log reg} and Figure \ref{fig: log reg}, respectively. Note that for leukemia dataset, $m$ is too small to perform batch learning and thus the performance of Adam is omitted for this example. Second-order methods clearly outperform first-order methods for the CtSlices dataset. Further, NewSamp is slightly faster than the multilevel methods but obtains slightly worse convergence rate (Figures \ref{fig: log leg ct sec}) and \ref{fig: log reg ct iter}, respectively). However, as expected, the efficiency of NewSamp reduces drastically for large values of $n$ (Leukemia dataset). Note that in this example the multilevel methods perform similarly which indicates that there is no need to use the full spectrum of the reduced Hessian matrix. Last, Figures \ref{fig: log leg news sec} and \ref{fig: log leg news iter} show the efficiency of SigmaSVD on a problem with over a million parameters. Such a problem lies at the heart of large scale machine learning and deep learning. Even when the coarse dimensions are selected very small ($N=0.001n$ in this experiment), Figure \ref{fig: log leg news sec} shows that multilevel methods are capable of returning solution with very good accuracy much faster than first-order methods.
\begin{table}[t]
    \centering
	\begin{tabular}{ |l|c|c|c|c|c|c|c| }
		\hline
		Datasets & Problem & $m$ & $n$ & $ N$ & $ |S_m| $ & $p$ & $\ell$   \\
		\hline
		CTslices & Logistic model & $53,500$  & $ 385 $ & $ 0.5n $ & $ 0.3m $ & $60$ & $ 10^{-6}$  \\
		Leukemia & Logistic model & $38$  & $ 7,129 $ & $ 0.1n $ & $ 0.7m $ & $150$ & $ 10^{-6}$ \\
		News20 & Logistic model & $19,996$  & $ 1,355,129 $ & $ 0.001n $ & $ - $ & $100$ & $ 10^{-12}$ \\
		\hline
	\end{tabular}
    \caption{Set-up and dataset details for Logistic regression.}
    \label{tab: log reg}
\end{table}

\begin{figure*}[!t]
\centering

\begin{subfigure}{0.48\textwidth}
    \centering
    \includegraphics[width=\linewidth]{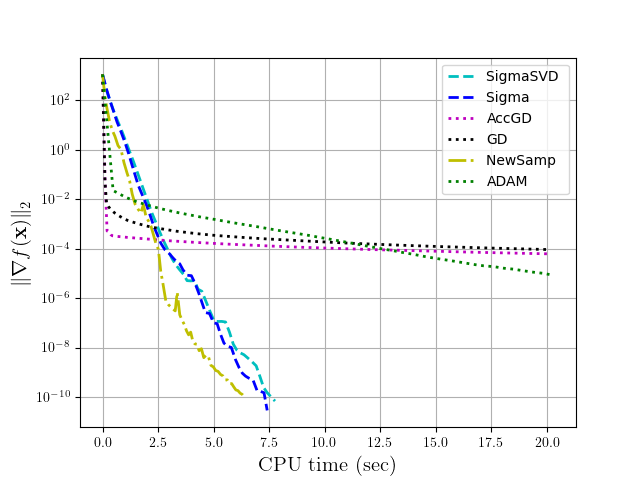}
    \caption{CtSlices}
    \label{fig: log leg ct sec}
\end{subfigure}
\hfill
\begin{subfigure}{0.48\textwidth}
    \centering
    \includegraphics[width=\linewidth]{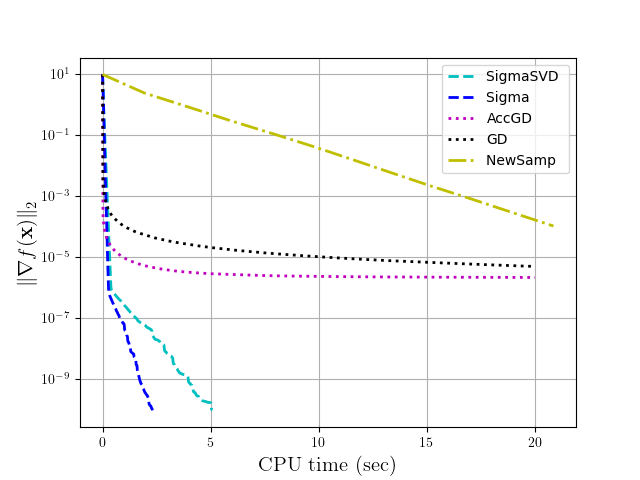}
    \caption{Leukemia}
    \label{fig: log reg leu sec}
\end{subfigure}

\par\bigskip

\begin{subfigure}{0.48\textwidth}
    \centering
    \includegraphics[width=\linewidth]{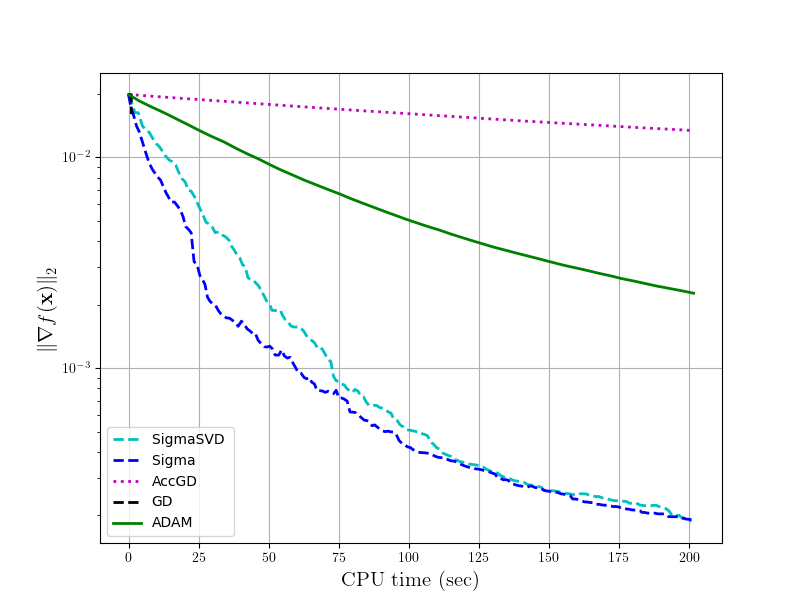}
    \caption{News20}
    \label{fig: log leg news sec}
\end{subfigure}
\hfill
\begin{subfigure}{0.48\textwidth}
    \centering
    \includegraphics[width=\linewidth]{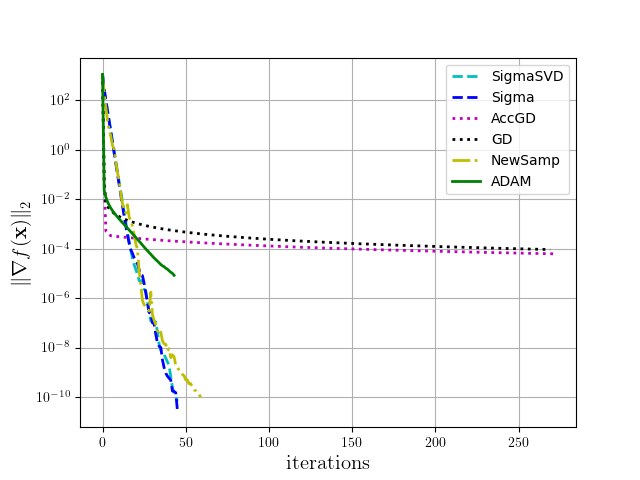}
    \caption{CtSlices}
    \label{fig: log reg ct iter}
\end{subfigure}

\par\bigskip

\begin{subfigure}{0.48\textwidth}
    \centering
    \includegraphics[width=\linewidth]{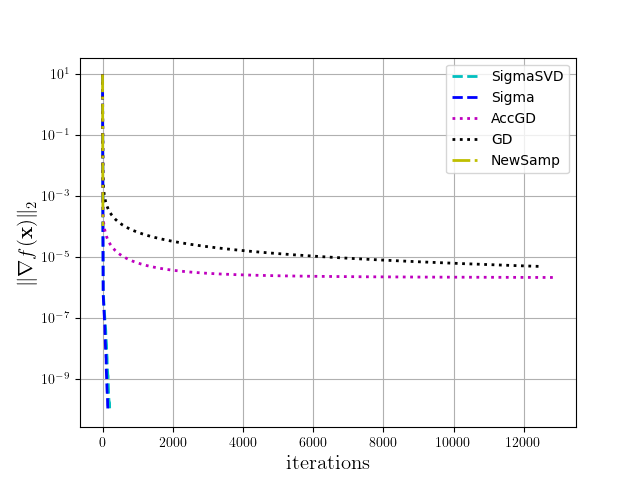}
    \caption{Leukemia}
    \label{fig: log reg leu iter}
\end{subfigure}
\hfill
\begin{subfigure}{0.48\textwidth}
    \centering
    \includegraphics[width=\linewidth]{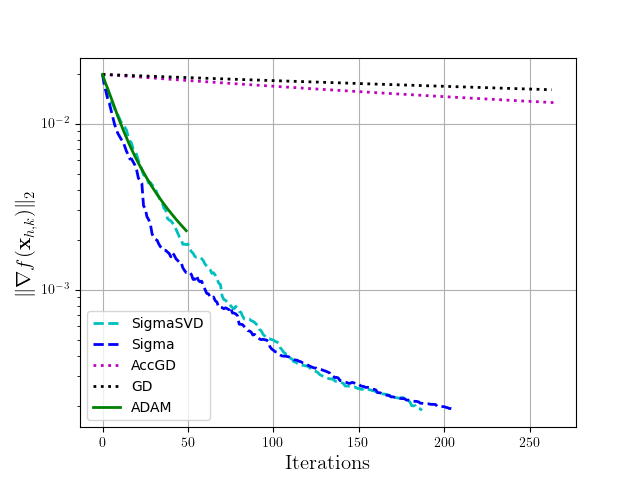}
    \caption{News20}
    \label{fig: log leg news iter}
\end{subfigure}

\caption{Logistic Regression. Plots (a) to (c) show the norm of the gradient vs cpu time in seconds while (d) to (f) show the norm of the gradient vs iterations for three machine learning datasets.}
\label{fig: log reg}
\end{figure*}

\subsection{Support Vector Machines}
We can train Support Vector Machines (SVMs) using the primal problem over \textit{hinge-q loss} or \textit{Huber loss} function. Since our approach requires twice differentiable functions we consider the \textit{hinge-2 loss} function for training the SVMs,
\begin{equation*}
\w{x}^* = \underset{\w{x}\in\R^n }{\operatorname{arg \ min}} \frac{1}{2} \| \w{x} \|_2^2 + \frac{\ell}{2} \sum_{i=1}^m \max \{0 , 1 - b_i \w{a}_i^T \w{x} \}^2. 
\end{equation*}
Note that the objective function of this section is convex, however the Hessian matrix is not Lipschitz continuous. The details and results for the logistic regression experiment are given in Table \ref{tab: svm} and Figure \ref{fig: SVM}, respectively. Again, both multilevel methods outperform its counterparts. Here, as also in the logistic regression example, using only a few eigenvalues to form the reduced Hessian matrix does not seem to affect the performance of SigmaSVD. Further, we observed that the performance of NewSamp becomes very poor, even compared to first-order methods, when highly regularized solutions are required (Figures \ref{fig: svm cov sec} and \ref{fig: svm cov iter}).  On the other hand, both multilevel methods, no matter how large or small the regularization parameter is chosen, always outperform the first-order methods.

\begin{table}[t]
    \centering
	\begin{tabular}{ |l|c|c|c|c|c|c|c| }
		\hline
		Datasets & Problem & $m$ & $n$ & $ N$ & $ |S_m| $ & $p$ & $\ell$   \\
		\hline
		CovType & SVM & $581,012$  & $ 54 $ & $ 0.5n $ & $ 0.3m $ & $10$ & $ 10^{-2}$  \\
		W8T & SVM & $14,951$  & $ 300 $ & $ 0.5n $ & $ 0.5m $ & $60$ & $ 10^{3}$ \\
		\hline
	\end{tabular}
    \caption{Set-up and dataset details for Support Vector Machines.}
    \label{tab: svm}
\end{table}

\begin{figure}
\centering
\begin{subfigure}{.45\textwidth}
\centering
  \includegraphics[width=.95\linewidth]{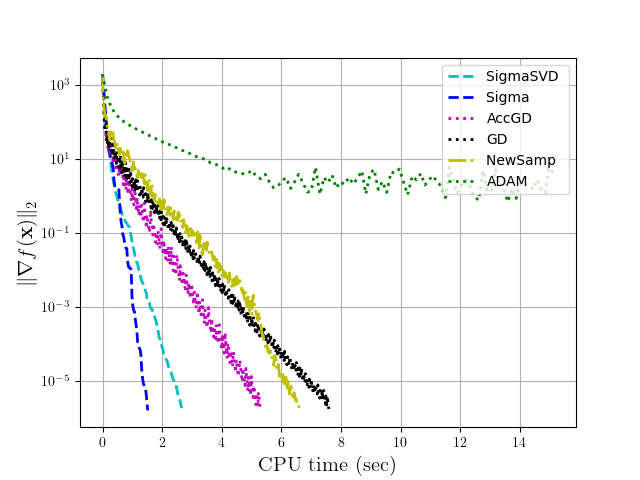}  
  \caption{W8T}
  \label{fig: svm w8t sec}
\end{subfigure}
\begin{subfigure}{.45\textwidth}
\centering
  \includegraphics[width=.95\linewidth]{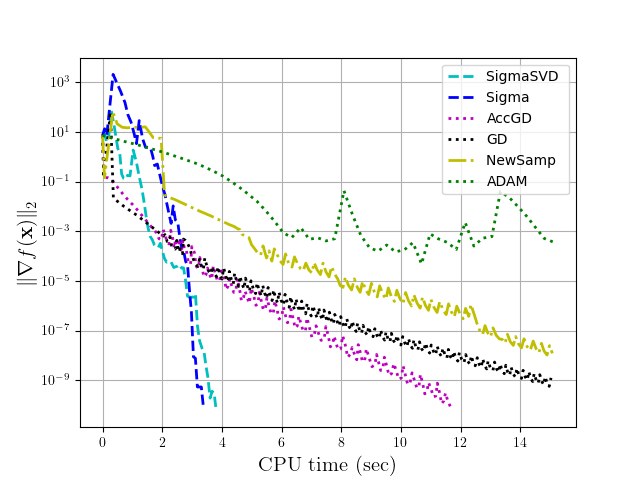}  
  \caption{CovType}
  \label{fig: svm cov sec}
\end{subfigure}
\centering
\newline
\begin{subfigure}{.45\textwidth}
\centering
  \includegraphics[width=.95\linewidth]{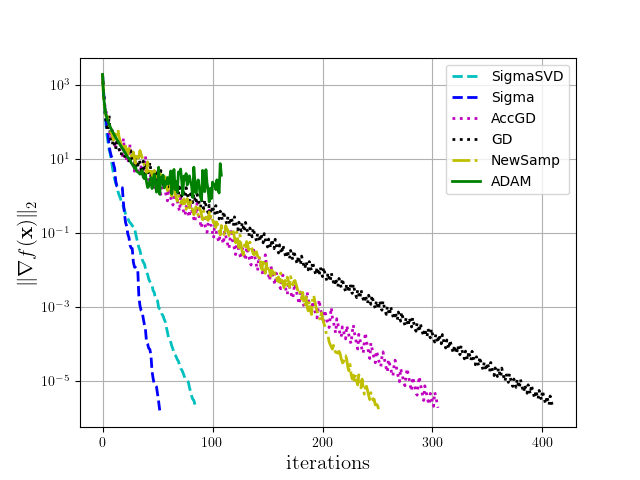}  
  \caption{W8T}
  \label{fig: svm w8t iter}
\end{subfigure}
\begin{subfigure}{.45\textwidth}
\centering
  \includegraphics[width=.95\linewidth]{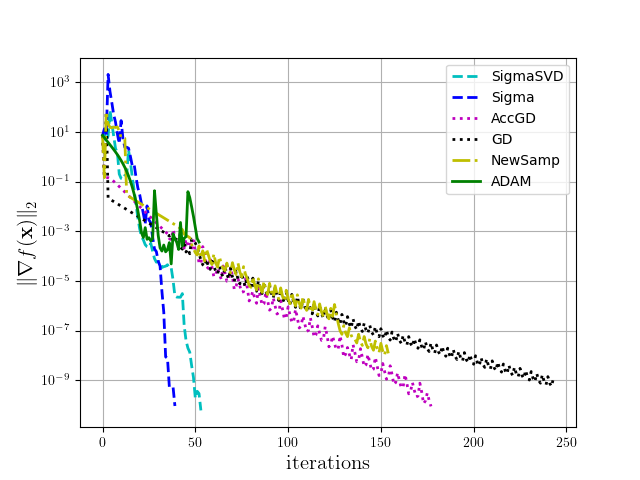}  
  \caption{CovType}
  \label{fig: svm cov iter}
\end{subfigure}
\caption{Support Vector Machines. Plots (a) and (b) show the norm of the gradient vs cpu time in seconds while (c) and (d) norm of the gradient vs iterations for two machine learning datasets.}
\label{fig: SVM}
\end{figure}

\end{document}